\newtheorem{lemma}{Lemma}
\newtheorem{theorem}{Theorem}
\newtheorem{corollary}{Corollary}
\newtheorem{proposition}{Proposition}
\newtheorem{assumption}{Assumption}
\newcommand{\dotprod}[2]{\left\langle #1,#2 \right\rangle}
\newcommand{\norms}[1]{\left\| #1 \right\|}
\newcommand{\expect}[1]{\mathbb{E}\left[ #1 \right]}
\definecolor{mydarkgreen}{RGB}{39,130,67}
\definecolor{mydarkred}{RGB}{192,47,25}
\definecolor{bgcolor2}{rgb}{0.8,1,1}
\definecolor{bgcolor}{rgb}{0.8,1,0.8}
\providecommand{\norm}[1]{\left\lVert#1\right\rVert}
  \providecommand{\R}{\mathbb{R}} %
  \providecommand{\N}{\mathbb{N}} %
  \DeclareMathOperator{\E}{{\mathbb E}}
  \providecommand{\EEb}[2]{{\mathbb E}_{#1}\left[#2\right] } %
  \DeclareMathOperator*{\argmin}{arg\,min}
  \providecommand{\mM}{\mathbf{M}}
  \providecommand{\mO}{\mathbf{O}}
  \providecommand{\cO}{\mathcal{O}}
  \providecommand{\cX}{\mathcal{X}}
  \providecommand{\cZ}{\mathcal{Z}}
\newcommand{\circledOne}{\text{\ding{172}}}
\newcommand{\circledTwo}{\text{\ding{173}}}
\newcommand{\circledThree}{\text{\ding{174}}}
\newcommand{\circledFour}{\text{\ding{175}}}
\newcommand{\normp}[1]{\left\|#1\right\|_p}
\def\<#1,#2>{\langle #1,#2\rangle}
\providecommand{\mycomment}[3]{\todo[inline, caption={},size=footnotesize,color=#1!20]{\textbf{#2: }#3}}%
\newcommand\commenter[2]%
\newcommand\csname #1\endcsname[1]{\mycomment{#2}{#1}{##1}}
\title{Methods for Optimization Problems with Markovian Stochasticity and Non-Euclidean Geometry}
\author{%
  Vladimir Solodkin\\
  Basic Research of Artificial Intelligence Laboratory (BRAIn Lab)
  \And
  Andrew Veprikov\\
  Basic Research of Artificial Intelligence Laboratory (BRAIn Lab)
  \And
  Aleksandr Beznosikov \\
  Basic Research of Artificial Intelligence Laboratory (BRAIn Lab)}
\begin{document}

\maketitle

\begin{abstract}
    This paper examines a variety of classical optimization problems, including well-known minimization tasks and more general variational inequalities. We consider a stochastic formulation of these problems, and unlike most previous work, we take into account the complex Markov nature of the noise. We also consider the geometry of the problem in an arbitrary non-Euclidean setting, and propose four methods based on the Mirror Descent iteration technique. Theoretical analysis is provided for smooth and convex minimization problems and variational inequalities with Lipschitz and monotone operators. The convergence guarantees obtained are optimal for first-order stochastic methods, as evidenced by the lower bound estimates provided in this paper.
\end{abstract}
\addtocontents{toc}{\protect\setcounter{tocdepth}{0}}
\vspace{-0.1cm}
\section{Introduction}
\label{section:introduction}
\vspace{-0.1cm}
    In the quest to solve complex real-world problems, optimization plays a crucial role in various fields, including artificial intelligence \cite{creswell2018generative, zhang2018improved, hashem2024adaptive}, finance \cite{cornuejols2006optimization}, operations research \cite{rardin1998optimization}, and reinforcement learning \cite{sutton2018reinforcement, mnih2015human}. While traditional deterministic optimization methods assume that all problem data is known and fixed, practical scenarios often involve uncertainty and variability. Stochastic optimization is providing to be a powerful method \cite{robbins1951stochastic} to address these challenges. This approach incorporates randomness into the optimization process, allowing it to handle the unpredictable nature of application problems more effectively. This enables the generation of more robust and reliable solutions. The sources of stochastic behavior can include noise \cite{huang2021stochastic}, sampling methods \cite{hedar2020estimation}, and environmental factors \cite{gorbunov2020stochastic}. To describe this randomness, one of the simplest and most common ways is to assume that noise variables are independent and identically distributed (i.i.d.) \cite{zhou2017stochastic, johnson2013accelerating, nazykov2024stochastic}. However, in modern applications, there is an increasing number of problems where the noise depends on a certain background. Therefore, the assumption of independence is not always satisfied. For instance, in reinforcement learning \cite{bhandari2018finite, srikant2019finite, durmus2021stability} or in distributed optimization \cite{lopes2007incremental, dimakis2010gossip, even2023stochastic}. Thus, we naturally arrive at the statement of the problem in a more general form, where the noise variables are the realizations of an ergodic Markov chain. 
    
    In the first paper on Markovian optimization \cite{duchi2012ergodic}, the authors immerse this type of stochasticity in arbitrary geometry, but they consider only the non-smooth problems. Recently, for the general case of Markovian noise the finite-time analysis of non-accelerated SGD-type algorithms has been studied in \cite{sun2018markov} in the convex case and in \cite{doan2022finite} in the non-convex and strongly convex settings. Alongside this, \cite{dorfman2022adapting} proposed a random batch size algorithm that adapts to the mixing time of the underlying chain for non-convex optimization with a compact domain. In the exploration of accelerated SGD, the paper \cite{doan2020convergence} obtained estimates and the article \cite{beznosikov2024first} improved these results for both non-convex and strongly convex settings. At the same time, numerous papers have appeared dealing with the special scenario of distributed optimization \cite{even2023stochastic, doan2022finite}. Unfortunately, all of these works only consider the Euclidean formulation, while it is interesting to investigate the arbitrary geometry setting as \cite{duchi2012ergodic}, but in the smooth case, since it is possible to design accelerated methods in it \cite{nesterov1983method}.
    
    The classical algorithm that makes use of non-Euclidean properties is Mirror Descent (MD) \cite{nemirovsky1983wiley}. Various modifications of this method have been extensively studied, including Stochastic MD \cite{zhou2017stochastic, jin2020efficiently}, Composite MD \cite{duchi2010composite, lei2018stochastic}, Accelerated MD \cite{lan2012optimal, krichene2015accelerated, lan2019unified}, Coordinate MD \cite{gao2020randomized, hanzely2021fastest}, Coupled MD \cite{allen2014linear} and even Zero-order MD \cite{gorbunov2022accelerated}. Recently, the first work combining MD with the Markovian noise has appeared \cite{xiao2024noise}. However, this paper only considers a token-algorithm (finite-sum stochasticity) in the special setting of decentralized optimization, and also proposes a non-accelerated method.

    When considering the generalisation of optimisation problems, it is possible to do this not only from the perspective of randomness or geometry, but also from the overall formulation of the problem to be solved. For instance, we can consider a wide class of variational inequalities (VI). VI cover important special cases, including minimization over a convex domain, saddle point/min-max, and fixed point problems with a wide variety of applications in supervised \cite{joachims2005support,bach2012optimization} and unsupervised \cite{xu2004maximum,bach2008convex} learning, image denosing \cite{chambolle2011first}, computational game theory \cite{facchinei2003finite}, and GANs \cite{gidel2018variational}. As demonstrated  in \cite{goodfellow2016nips}, the classical Gradient/Mirror Descent algorithms can diverge for the VI problem, therefore, more sophisticated techniques need to be introduced. One of the first algorithms dealing with VI is Extragradient method \cite{korpelevich1976extragradient}, followed by Dual Extragradient \cite{Nesterov2003DualEA} and Mirror-Prox (MP) \cite{nemirovski2004prox}. As shown in \cite{nemirovski2004prox}, the Mirror-Prox algorithm attains an optimal convergence rate in monotone variational inequalities with Lipschitz continuous operators. The pioneer work dealing with MP in the stochastic setting was done in \cite{juditsky2011solving}. Later, the analysis of stochastic finite-sum variational inequalities or saddle point problems has been extensively studied by many authors, but mostly in the i.i.d. setting \cite{chavdarova2019reducing, alacaoglu2022stochastic, beznosikov2023stochastic}. In the Markovian noise setup, \cite{wang2022stability} considered the finite sum case in the narrowing of the saddle point problem only. Whereas, \cite{beznosikov2024first} examines general formulation of VI, yet again in the Euclidean setup. 

    Motivated by these research gaps, we arrived at the following results.
    \vspace{-3mm}
    \subsection{Our contribution}
    \vspace{-2mm}
    Our main contributions are:
    
    $\bullet$ We present new algorithms based on Mirror Descent and Mirror-Prox methods for minimization problems and for variational inequalities, respectively. The analysis is provided in the general setup of arbitrary norms and compatible Bregman divergences, which is quite rare for the Markovian case.

    $\bullet$ We propose two possible techniques for calculating the gradient estimator: with batching (Algorithms \ref{alg:MDG} and \ref{alg:MPG}), and without batching (Algorithms \ref{alg:MDGwb} and \ref{alg:MPGwb}).
    This paper also provides lower bounds for minimization and VI problems with Markovian noise (Proposition \ref{proposition:1} and \ref{proposition:2}), allowing us to obtain optimal convergence rates for Algorithms \ref{alg:MDG} and \ref{alg:MPG} using batches of size only $\widetilde{\mathcal{O}}(1)$ (Theorem \ref{theorem:MDG} and Theorem \ref{theorem:MPG}).
    
    $\bullet$ For the non-batched version of the algorithm that solves the VI problem (Algorithm \ref{alg:MPGwb}), we manage to show the convergence under the assumption that the variance is bounded only in expectation with respect to the stationary distribution (Theorem \ref{theorem:MPGwb}), which has not been achieved with Markovian noise before.
    
    $\bullet$ As a byproduct of our main results, we provide a novel deviation bound for the mean of realizations of geometrically ergodic Markov chains (Lemma \ref{lem:xuivjopeentertainment}) in an arbitrary norm. To the best of our knowledge, this result has only been obtained in the Euclidean setup \cite{beznosikov2024first, paulin2015concentration}.
    
    \vspace{0.2cm}
\textbf{Notations. }
We use $\dotprod{x}{y} := \sum_{j = 1}^d x_i y_i$ to denote standard inner product of vectors $x, y \in \mathbb{R}^d$. We introduce $l_p$-norm of vector $x \in \mathbb{R}^d$ as $\normp{x}^p := \sum_{j=1}^d |x_j|^p$. We use $\|\cdot\|_*$ to denote the norm conjugate to $\|\cdot\|$, in particular $\|\cdot\|_* := \max_{z:\|z\|\leq1}\langle \cdot, z\rangle$. We define $\textit{poly}(x)$ to be a notation of polynomial dependence, that is, $\textit{ poly}(x) := x^k$ for some $k\geq0$. We use classical notation that $f(t) = \cO( g(t))$ if exists $C > 0$ such that $f(t) \leq C g(t)$ for all $t \in \N$ and we use $f(t) = \widetilde{\cO} (g(t))$, if exists $C > 0$ such that $f(t) \leq C \log(t) g(t)$ for all $t \in \N$.
Let $(\mathsf{Z},\mathsf{d}_{\mathsf{Z}})$ be a complete separable metric space endowed with its Borel $\sigma$-field $\mathcal{Z}$. We denote by $(\mathsf{Z}^{\ensuremath{\mathbb{N}}}, \mathcal{Z}^{\otimes \ensuremath{\mathbb{N}}})$ the corresponding canonical process. Consider the Markov kernel defined on $\mathsf{Z} \times \mathcal{Z}$, and denote by $\mathbb{P}_{\xi}$ and $\E_{\xi}$ the corresponding probability distribution and the expected value with initial distribution $\xi$. Let $(Z_k)_{k \in \ensuremath{\mathbb{N}}}$ be the corresponding canonical process. For $\xi = \delta_{z}$, $z \in \mathsf{Z}$, we simply write $\mathbb{P}_{z}$ and $\E_{z}$ instead of $\mathbb{P}_{\delta_{z}}$ and $\E_{\delta_{z}}$. For $x^{0},\ldots,x^{t}$ being the iterates of any algorithm, we denote $\mathcal{F}_{t} = \sigma(x^{j}, j \leq t)$ and write $\E_{t}$ to denote $\E[\cdot | \mathcal{F}_{t}]$.

%%% Main results %%%
\section{Main results}
\vspace{-2mm}
\subsection{Markovian Accelerated Mirror Descent}
\label{subsection:MAMD}

In this section, we study the optimization problem of the form
\begin{equation}
    \label{eq:problem1}
    f^* := \min_{x\in\mathcal{X}} \left\{ f(x) := \E_{Z\sim\pi}[F(x, Z)] \right\},
\end{equation}
where $\pi$ is a usually unknown distribution and $\mathcal{X}$ is a normed vector space with a dual space $\mathcal{X}^*$ and pair of primal and dual norms $\| \cdot \|$, $\| \cdot \|_*$. 
% Let us give the classical definition of Bregman divergence $V(x, y)$. 
Let $\omega(\cdot)$ be a proper convex lower semicontinuous and $1$-strongly convex function with respect to $\| \cdot \|$ on $\cX$. Then for any $x \in \cX$, for which $\partial \omega(x)$ is non-empty and for any $y \in \cX$ we can define the Bregman divergence as 
\begin{equation*}
    V(x, y) := \omega(y) - \omega(x) - \dotprod{\omega'(x)}{y - x}, ~~\omega'(x) \in \partial \omega(x) .
\end{equation*}
We assume that for all optimization problems, that are considered in this paper, there exists the Bregman divergence $V$ with respect to an arbitrary norm $\| \cdot \|$ on $\cX$. We also assume that $\cX$ is compact, and $D^2 := \max_{x, y \in \cX}V(x, y)$. From 1-strongly convexity of $\omega$ it follows that $\norm{x - y}^2 \leq 2 D^2$ for all $x, y \in \cX$. 
For all $\xi \in \R^d$ we also define the \textit{prox mapping} $P_x(\xi)$ as 
\begin{equation*}
    P_x(\xi) := \arg\min_{y \in \cX} \left\{ V(x, y) + \dotprod{\xi}{y} \right\}.
\end{equation*}
We now introduce the common assumptions, required for the analysis of solving \eqref{eq:problem1}.
\captionsetup{type=assumption}
\renewcommand{\theassumption}{1}
\begin{assumption}
    \label{as:lip}
    The function $f$ is $L$-smooth on $\mathcal{X}$ with respect to $\| \cdot \|$ norm, i.e., it is differentiable and there exists $L > 0$ such that for any $x, y \in \mathcal{X}$ the following inequality holds
    \begin{equation*}
        \| \nabla f(x) - \nabla f(y)\|_* \leq L\| x - y\|.
    \end{equation*}
\end{assumption}
\captionsetup{type=assumption}
\renewcommand{\theassumption}{2}
\begin{assumption}
    \label{as:conv}
    The function $f$ is convex on $\mathcal{X}$, i.e., it is differentiable and for any $x, y \in \mathcal{X}$ the following inequality holds 
    \begin{equation*}
        f(y) \leq f(x) - \langle \nabla f(y), x - y\rangle.
    \end{equation*}
\end{assumption}
We assume that the access to the function $f$ and its gradient is only available through the noisy oracles $F(x, Z)$ and $\nabla F (x, Z )$, respectively. Further, we make assumptions on the noise variables $\left\{ Z_t \right\}_{t = 0}^{+\infty}$. 
\captionsetup{type=assumption}
\renewcommand{\theassumption}{3}
\begin{assumption}
    \label{as:noise}
    $\{Z_t\}_{t=0}^{\infty}$ is a stationary Markov chain on $(\mathsf{Z},\mathcal{Z})$ with unique invariant distribution $\pi$. Moreover, $\{Z_t\}_{t=0}^{\infty}$ is uniformly geometrically ergodic with mixing time $\tau_{\text{mix}}$, i.e. for all $t > 0$, $z_0, z \in \cZ$ the following inequality holds
    \begin{equation*}
        \left| \mathbb{P} \left\{ Z_t = z | Z_0 = z_0 \right\} - \pi_z \right| \lesssim (1/2)^{t / \tau_{\text{mix}}} .
    \end{equation*}
    % Moreover, $\mathrm{Q}$ is uniformly geometrically ergodic with mixing time $\tau \in \mathbb{N}$, i.e., for every $k \in \mathbb{N}$,
    % \begin{equation*}
    %     \Delta(\mathrm{Q}^k) = \sup_{z,z' \in \mathsf{Z}} (1/2) \|\mathrm{Q}^k(z, \cdot) - \mathrm{Q}^k(z',\cdot)\|_{\mathsf{TV}} \leq (1/4)^{\lfloor k / \tau\rfloor}\,.
    % \end{equation*}
\end{assumption}
Assumption \ref{as:noise} is a common occurrence in the literature on Markovian noise \cite{creswell2018generative, doan2020convergence, dorfman2022adapting, beznosikov2024first}. This assumption covers finite Markov chains with irreducible and aperiodic transition matrix considered in \cite{even2023stochastic}. The mixing time $\tau_{\text{mix}}$ is the number of steps of the Markov chain required for the distribution of the current state to be close to the stationary probability $\pi$. We now provide an assumption on the stochastic gradient.
\captionsetup{type=assumption}
\renewcommand{\theassumption}{4}
\begin{assumption}
    \label{as:var}
    For all $x\in\R^d$ it holds that $\E_{\pi}[\nabla F(x,Z)] = \nabla f(x)$. Moreover, for all $Z \in \cZ$ and $x\in \R^d$ the following inequality holds
    \begin{equation*}
        \| \nabla F(x, Z) - \nabla f(x)\|_*^2 \leq \sigma^2.
    \end{equation*}
\end{assumption}
In the Markovian noise problems, we are forced to bound the noise uniformly rather than in expectation, as is done in the i.i.d. case \cite{agarwal2011stochastic, bach2016highly, akhavan2020exploiting, dvurechensky2021accelerated}. This complication is found in many works \cite{duchi2012ergodic, sun2018markov, doan2020convergence, doan2022finite, dorfman2022adapting, even2023stochastic, beznosikov2024first}, and the authors have not yet worked out how to avoid this assumption.

In order to obtain well-founded theoretical estimates, we will consider accelerated methods to analyze the Markovian noise setting. We will present two different techniques for gradient estimation. One technique utilizes a batching construction similar to that used in \cite{dorfman2022adapting, beznosikov2024first}. The other technique does not involve any batching. Firstly, let us introduce a method that uses only one sample of $Z$ at each iteration (Algorithm \ref{alg:MDGwb}).

\begin{algorithm}[h!]
   \caption{\texttt{Markovian Accelerated Mirror Descent (MAMD)} without batching}
   \label{alg:MDGwb}
\begin{algorithmic}[1]
    \State {\bf Parameters:} stepsizes $\{\gamma_t\}$, momentums $\{\beta_t\}$ and number of iterations $T$.
    \State {\bf Initialization:} choose  $x^0 = x_f^0 \in \mathcal{X}$.
    \For{$t = 0, 1, 2, \dots, T$}
        \State $x_g^t = \beta_t^{-1}x^t+(1-\beta_t^{-1})x_f^t$ 
        \State $x^{t+1} = 
        %\underset{u\in \mathcal{X}}{\argmin}\{V(x^k, u) + \gamma_t\langle \nabla F(x_g^k, Z_k), u\rangle\}$
        P_{x^t}(\gamma_t \nabla F(x_g^t, Z_t))$
        \State $x_f^{t+1} = \beta^{-1}_t x^{t+1} +(1 - \beta^{-1}_t)x_f^t$
    \EndFor
\end{algorithmic}
\end{algorithm}

This algorithm is similar to vanilla Accelerated Mirror Descent \cite{lan2012optimal}, but with the Markovian noise. The convergence rate of Algorithm \ref{alg:MDGwb} is explored in the theorem below.
\begin{theorem}[Convergence of \texttt{MAMD} without batching (Algorithm \ref{alg:MDGwb})]
\label{theorem:MDGwb}
    Let Assumptions \ref{as:lip}, \ref{as:conv}, \ref{as:noise}, \ref{as:var} be satisfied. Let the problem \eqref{eq:problem1} be solved by Algorithm \ref{alg:MDGwb}. Assume that the stepsizes $\gamma_t$ and momentums $\beta_t$ are chosen such that $0 \leq (\beta_{t+1}-1)\gamma_{t+1} \leq \beta_t\gamma_t$, $\beta_t \geq 2 \gamma_t L$ for all $t \geq \tau_{\text{mix}}$ and $\beta_{\tau_{\text{mix}}} = 1$. Then, for all $T \geq \tau_{\text{mix}}$ it holds that 
    \begin{equation*}
        (\beta_{T} - 1) \gamma_{T} \expect{f(x^{T}_f) - f^*}
        = \widetilde{\cO} \left(
        D^2
        +
        \tau_{\text{mix}}^3 \sigma^2 \sum\limits_{t=\tau}^T \gamma_t^2. \right) .
    \end{equation*}
\end{theorem}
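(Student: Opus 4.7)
The plan is to adapt Lan's accelerated mirror descent analysis to Markovian noise using a delay–decoupling trick for the bias. First I would combine (i) $L$-smoothness of $f$ applied to $f(x_f^{t+1})$, using $x_f^{t+1} - x_g^t = \beta_t^{-1}(x^{t+1} - x^t)$ from the update rule, (ii) convexity of $f$ at $x_g^t$ to write $f(x_g^t) - f^* \leq \langle \nabla f(x_g^t), x_g^t - x^*\rangle$, and (iii) the standard three-point identity satisfied by the prox step with test point $x^*$, namely
\begin{equation*}
\gamma_t \langle \nabla F(x_g^t, Z_t), x^{t+1} - x^*\rangle \leq V(x^t, x^*) - V(x^{t+1}, x^*) - V(x^t, x^{t+1}).
\end{equation*}
The condition $\beta_t \geq 2 \gamma_t L$, together with $1$-strong convexity of $\omega$, lets one absorb the quadratic $\tfrac{L}{2}\|x_f^{t+1} - x_g^t\|^2$ term into a fraction of the $-V(x^t, x^{t+1})$ contribution.

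Combining these ingredients with the algebra that makes the accelerated method telescope (the convex combination defining $x_g^t$ is precisely what is needed to rewrite $\langle \nabla f(x_g^t), x_g^t - x^*\rangle$ as a weighted sum of $\langle \nabla f(x_g^t), x_f^t - x^*\rangle$ and $\langle \nabla f(x_g^t), x^{t+1} - x^*\rangle$) yields the one-step recursion
\begin{equation*}
\beta_t \gamma_t \left[ f(x_f^{t+1}) - f^* \right] \leq (\beta_t - 1)\gamma_t \left[ f(x_f^t) - f^* \right] + V(x^t, x^*) - V(x^{t+1}, x^*) + \gamma_t \langle \delta_t, x^* - x^{t+1}\rangle,
\end{equation*}
with $\delta_t := \nabla F(x_g^t, Z_t) - \nabla f(x_g^t)$. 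Telescoping from $t = \tau_{\text{mix}}$ to $T$ via $(\beta_{t+1}-1)\gamma_{t+1} \leq \beta_t \gamma_t$ collapses the deterministic part to a single $V(x^{\tau_{\text{mix}}}, x^*) \leq D^2$, leaving the stochastic residual $\sum_{t=\tau_{\text{mix}}}^T \gamma_t \langle \delta_t, x^* - x^{t+1}\rangle$ to be controlled.

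The principal obstacle is exactly this residual, since under Markovian noise $\delta_t$ is not conditionally mean-zero. I would use the now-standard decoupling trick: choose a delay $\tau \asymp \tau_{\text{mix}} \log T$ and split
\begin{equation*}
\langle \delta_t, x^* - x^{t+1}\rangle = \langle \nabla F(x_g^{t-\tau}, Z_t) - \nabla f(x_g^{t-\tau}), x^* - x^{t-\tau}\rangle + R_t,
\end{equation*}
where $R_t$ collects the corrections from swapping $x_g^t \leftrightarrow x_g^{t-\tau}$ and $x^{t+1} \leftrightarrow x^{t-\tau}$. The first piece, after conditioning on $\mathcal{F}_{t-\tau}$ (under which both $x_g^{t-\tau}$ and $x^{t-\tau}$ are measurable), is controlled in the dual norm by Lemma \ref{lem:xuivjopeentertainment} — the new arbitrary-norm deviation bound for geometrically ergodic chains — which yields an $\widetilde{\mathcal{O}}((1/2)^{\tau/\tau_{\text{mix}}})$ contribution, negligible thanks to the $\log T$ choice of $\tau$ (this is where the $\widetilde{\mathcal{O}}$ logarithm enters).

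For the remainder $R_t$, I would use $\|\delta_t\|_* \leq 2\sigma$ from Assumption \ref{as:var} together with a per-step iterate-displacement bound $\|x^{t+1} - x^{t-\tau}\| \lesssim \tau \max_{s \in [t-\tau,t]} \gamma_s (\sigma + L D)$, which follows from $1$-strong convexity of $\omega$ and the boundedness of $\nabla F$ on the compact $\cX$. The mixed-gradient piece $\langle \nabla F(x_g^t, Z_t) - \nabla F(x_g^{t-\tau}, Z_t), \cdot\rangle$ is handled by $(L \|x_g^t - x_g^{t-\tau}\| + 2\sigma) \cdot 2D$, again via the $\tau$-step movement bound. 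After Cauchy–Schwarz, a standard reindexing argument (each $\gamma_s$ appears in at most $\tau$ outer indices), and Young's inequality to absorb any $V(x^{t+1}, x^*)$ cross-term back onto the left-hand side, three factors of $\tau$ accumulate — one from the shift, one from the iterate movement, one from the double-sum reindexing — combining to give precisely the $\tau_{\text{mix}}^3 \sigma^2 \sum_t \gamma_t^2$ noise term (polylogarithmic factors being folded into $\widetilde{\mathcal{O}}$). The delicate part throughout is that every Cauchy–Schwarz step must be executed in the primal/dual norm pair rather than the Euclidean one, which is exactly what the new deviation lemma enables.
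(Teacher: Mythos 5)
Your overall skeleton --- the Lan-style one-step recursion, telescoping with $(\beta_{t+1}-1)\gamma_{t+1}\le\beta_t\gamma_t$, and a step-back of $\tau\asymp\tau_{\text{mix}}\log T$ to decouple the bias --- matches the paper's proof (Lemma \ref{lemma:MDwb} plus Appendix \ref{appendix:MDGwb}). However, two of your concrete choices in handling the residual would not deliver the stated bound. First, you shift the gradient argument from $x_g^t$ to $x_g^{t-\tau}$ and bound the swap correction $\langle\nabla F(x_g^t,Z_t)-\nabla F(x_g^{t-\tau},Z_t),\cdot\rangle$ by $(L\|x_g^t-x_g^{t-\tau}\|+2\sigma)\cdot 2D$. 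The $2\sigma\cdot 2D$ piece is a constant per iteration; multiplied by $\gamma_t$ and summed it contributes $\Theta(\sigma D\sum_t\gamma_t)$, and after dividing by $(\beta_T-1)\gamma_T\asymp\gamma_* T^2$ this leaves a non-vanishing $\Theta(\sigma D)$ error. Conditioning does not rescue it: $\E_{t-\tau}[\nabla F(x_g^t,Z_t)-\nabla f(x_g^t)]$ is exactly the quantity you are trying to control, so the decomposition is circular, and the paper does not assume per-sample smoothness of $F(\cdot,Z)$ that would let you drop the $2\sigma$. The paper avoids this by keeping $\delta^t=\nabla f(x_g^t)-\nabla F(x_g^t,Z_t)$ intact, shifting only the test point $x^t\to x^{t-\tau}$, and bounding $\|\E_{t-\tau}[\delta^t]\|_*\lesssim(1/2)^{\tau/\tau_{\text{mix}}}\sigma/\pi_{\min}$ directly from Assumption \ref{as:noise}.

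Second, your displacement bound $\|x^{t+1}-x^{t-\tau}\|\lesssim\tau\max_s\gamma_s(\sigma+LD)$ injects a uniform gradient bound into the noise term ($\max_{x\in\cX}\|\nabla f(x)\|_*$ is a $G$-type quantity, not $LD$ in general for constrained problems), producing a $\tau\,\sigma(\sigma+G)\sum_t\gamma_t^2$ contribution rather than $\tau_{\text{mix}}^3\sigma^2\sum_t\gamma_t^2$ --- precisely the kind of $G$-dependence the paper criticizes in competing bounds. The paper's route is instead to retain a fraction of the negative term $-\tfrac12\left(1-\eta_t-L\gamma_t/\beta_t\right)\|x^{t+1}-x^t\|^2$ produced by the prox step, apply Fenchel--Young to each $\langle\delta^t,x^{t-s+1}-x^{t-s}\rangle$ with $\kappa_t=1/(4\gamma_t\tau^2)$, and absorb the squared displacements into those negative terms after reindexing; the $\tau^3$ in the theorem arises exactly from $\gamma_t\cdot\sigma^2\tau/\kappa_t\asymp\gamma_t^2\sigma^2\tau^3$, and only $\sigma^2$ survives. (A minor point: Lemma \ref{lem:xuivjopeentertainment} is a deviation bound for averages of $N$ samples and is not used in this theorem's proof; the single decoupled term is controlled by Assumption \ref{as:noise} alone.)
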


We can specify the choice of $\gamma_t$ and $\beta_t$ to achieve the accelerated convergence of Algorithm \ref{alg:MDGwb}. If we take $\beta_t = \max\{(t-\tau_{\text{mix}}) / 2 + 1; 1\}$ and $\gamma_t \leq \beta_t/(2L)$, then all conditions on the stepsizes and momentums from Theorem \ref{theorem:MDGwb} are satisfied.
\begin{corollary}[Parameters tuning for Theorem \ref{theorem:MDGwb}]
\label{corollary:MDGwb}
    Under the conditions of Theorem \ref{theorem:MDGwb}, choosing $\beta_t$ and $\gamma_t$ as 
    \begin{equation*}
        \beta_t := \max\left\{ \frac{t - \tau_{\text{mix}}}{2} + 1;1 \right\} 
        ,
        \gamma_t := \max\left\{ \frac{t - \tau_{\text{mix}}}{2} + 1;1 \right\} \cdot 
        \min\left\{ \frac{1}{2L} ~;~ \frac{D}{(T - \tau_{\text{mix}})^{3/2} \sigma \tau_{\text{mix}}^{3/2} }\right\},
    \end{equation*}

    in order to achieve the $\varepsilon$-approximate solution (in terms of $\expect{f(x^{T}_f) - f^*} \leq \varepsilon$) it takes

    \begin{equation*}
        T = \widetilde{\cO} \left( \max\left\{ \sqrt{\frac{L D^2}{\varepsilon}} ~;~ \frac{\tau_{\text{mix}}^3 D^2 \sigma^2}{\varepsilon^2} \right\} \right) 
        \text{ iterations (oracle calls) of Algorithm \ref{alg:MDGwb}} .
    \end{equation*}
\end{corollary}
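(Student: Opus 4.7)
The plan is to specialize the bound of Theorem \ref{theorem:MDGwb} to the stated choice of $\{\beta_t,\gamma_t\}$ and then invert the resulting convergence rate in $T$. Introduce the shorthand $s:=t-\tau_{\text{mix}}$ and
\[
c := \min\!\left\{\tfrac{1}{2L},\; \tfrac{D}{(T-\tau_{\text{mix}})^{3/2}\sigma\,\tau_{\text{mix}}^{3/2}}\right\},
\]
so that $\beta_t = s/2+1$ and $\gamma_t = c\beta_t$ on the index range $t\geq\tau_{\text{mix}}$ that actually enters the bound.

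First I would verify that this choice fits the three hypotheses of Theorem \ref{theorem:MDGwb}: $\beta_{\tau_{\text{mix}}}=1$ is immediate from the $\max$; $\beta_t\geq 2\gamma_tL$ reduces to $c\leq 1/(2L)$, which holds by construction; and the monotonicity $(\beta_{t+1}-1)\gamma_{t+1}\leq \beta_t\gamma_t$ becomes the polynomial inequality $(\beta_{t+1}-1)\beta_{t+1}\leq \beta_t^2$, which with $\beta_t=s/2+1$ boils down to $s^2/4+s+3/4\leq s^2/4+s+1$.

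Second, I would evaluate the two quantities that appear in the bound. Since $\beta_t=s/2+1$,
\[
(\beta_T-1)\gamma_T \;=\; \Theta\!\left(c\,(T-\tau_{\text{mix}})^2\right),
\qquad
\sum_{t=\tau_{\text{mix}}}^T \gamma_t^2 \;=\; c^2\sum_{s=0}^{T-\tau_{\text{mix}}}(s/2+1)^2 \;=\; \Theta\!\left(c^2\,(T-\tau_{\text{mix}})^3\right).
\]
Plugging into the conclusion of Theorem \ref{theorem:MDGwb} and dividing by $(\beta_T-1)\gamma_T$ gives
\[
\E[f(x_f^T)-f^*] \;=\; \widetilde{\cO}\!\left(\frac{D^2}{c\,(T-\tau_{\text{mix}})^2} \;+\; c\,\tau_{\text{mix}}^3\sigma^2(T-\tau_{\text{mix}})\right).
\]

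Third, I would split on which branch of $c$ is active. In the deterministic-dominated regime $c=1/(2L)$ the bound becomes $\widetilde{\cO}\!\left(LD^2/T^2 + \tau_{\text{mix}}^3\sigma^2 T/L\right)$; in the stochastic-dominated regime $c = D/((T-\tau_{\text{mix}})^{3/2}\sigma\,\tau_{\text{mix}}^{3/2})$ both summands collapse to $\widetilde{\cO}\!\left(D\sigma\tau_{\text{mix}}^{3/2}/\sqrt{T}\right)$. Taking the envelope yields $\widetilde{\cO}\!\left(LD^2/T^2 + D\sigma\tau_{\text{mix}}^{3/2}/\sqrt{T}\right)$; requiring each summand to be $\leq \varepsilon$ and inverting gives the claimed $T=\widetilde{\cO}(\max\{\sqrt{LD^2/\varepsilon},\;\tau_{\text{mix}}^3 D^2\sigma^2/\varepsilon^2\})$.

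There is no real obstacle here beyond bookkeeping: the main care is (i) confirming the hypotheses on $\{\beta_t,\gamma_t\}$ hold on the whole tail $t\geq\tau_{\text{mix}}$, and (ii) checking that the $\min$ in $\gamma_t$ actually produces the minimum of the two final error regimes rather than their sum, which is why the stochastic branch is chosen to simultaneously balance the two $\widetilde{\cO}$ terms in the displayed bound above.
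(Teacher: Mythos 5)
Your proposal is correct and follows essentially the same route as the paper's own proof: substitute the stated $\beta_t,\gamma_t$ into Theorem \ref{theorem:MDGwb}, use $\sum_{s}(s/2+1)^2=\Theta((T-\tau_{\text{mix}})^3)$, divide by $(\beta_T-1)\gamma_T=\Theta(\gamma_*(T-\tau_{\text{mix}})^2)$, and resolve the $\min$ in $\gamma_*$ to obtain $\widetilde{\cO}\bigl(LD^2/T^2+D\sigma\tau_{\text{mix}}^{3/2}/\sqrt{T-\tau_{\text{mix}}}\bigr)$ before inverting in $T$. The only addition is your explicit verification of the stepsize/momentum hypotheses, which the paper asserts in the main text but does not re-derive inside the corollary's proof.
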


The full proofs of Theorem \ref{theorem:MDGwb} and Corollary \ref{corollary:MDGwb} are provided in Appendix \ref{appendix:MDGwb}. The results of Theorem \ref{theorem:MDGwb} and Corollary \ref{corollary:MDGwb} match with the results from \cite{lan2012optimal} in the i.i.d. ($\tau_{\text{mix}} = 1$) case. In this work, the authors obtained results of the form $T = \cO \left( \max\left\{ \sqrt{(L  D^2) / \varepsilon} ~;~ D^2 \sigma^2 / \varepsilon^2 \right\} \right)$, but in the Markovian noise setup we inevitably face the terms of the form $poly(\tau_{\text{mix}})$. These terms cannot be removed, because they appear in the lower bounds on the convergence rate of the methods that utilize Markovian properties \cite{bresler2020squares}. Despite this, Algorithm \ref{alg:MDGwb} has a reasonably good polynomial dependence on mixing time $\tau_{\text{mix}}$. However, there are several works \cite{doan2020finitetime, doan2022finite}, whose bounds have terms that are even exponential in the mixing time.

\textbf{Sketch proof of Theorem \ref{theorem:MDG}.} Since we did not use the batching technique, in the proof of Theorem \ref{theorem:MDGwb} we were forced to use the technique of \textit{stepping back} by $\tau_{\text{mix}}$ steps, i.e., when estimating the term of the form  $\expect{\dotprod{\nabla f(x^t_g) - \nabla F(x^t_g, Z_t)}{x^{t} - x^*}}$, which is zero in the i.i.d. case, we need to reel it in the form of

\begin{equation*}
\begin{split}
    \expect{\dotprod{\nabla f(x^t_g) - \nabla F(x^t_g, Z_t)}{x^{t} - x^*}}
    &=
    \expect{\dotprod{\nabla f(x^{t}_g) - \nabla F(x^{t}_g, Z_t)}{x^{t-\tau_{\text{mix}}} - x^*}}
    \\&+
    \sum\limits_{s = 0}^{\tau_{\text{mix}} - 1}\expect{\dotprod{\nabla f(x^{t}_g) - \nabla F(x^{t}_g, Z_t)}{x^{t-s} - x^{t-s-1}}} .
\end{split}
\end{equation*}

The first term is bounded using the ergodicity of the Markov chain $\{Z_t\}_{t=0}^\infty$ (Assumption \ref{as:noise}), and the second one is bounded by the Fenchel-Young inequality and Assumption \ref{as:var}. For a more detailed proof, see Appendix \ref{appendix:MDGwb}. 

If we use the batching technique, as in \cite{dorfman2022adapting, beznosikov2024first}, we can avoid the complicated schemes and further reduce the polynomial dependence on $\tau_{\text{mix}}$ to a linear one. Let us now introduce a modified version of Algorithm \ref{alg:MDGwb}, that utilizes the batching technique (Algorithm \ref{alg:MDG}).

\begin{algorithm}[h!]
   \caption{\texttt{Markovian Accelerated Mirror Descent (MAMD)} with batching}
   \label{alg:MDG}
\begin{algorithmic}[1]
    \State {\bf Parameters:} stepsizes $\{\gamma_t\}$, momentums $\{\beta_t\}$, number of iterations $T$, batchsize $B$ and batchsize limit $M$.
    \State {\bf Initialization:} choose  $x^0 = x_f^0 \in \mathcal{X}$ .
    \For{$t = 0, 1, 2, \dots, T$}
        \State $x_g^t = \beta_t^{-1}x^t+(1-\beta_t^{-1})x_f^t$ 
        \State Sample $\textstyle{J_t \sim \text{Geom}\left(1/2\right)}$
        \State
        \text{\small{ 
        $g^{t} = g^{t}_0 +
            \begin{cases}
            \textstyle{2^{J_t}\hspace{-0.1cm}\left( g^{t}_{J_t}  - g^{t}_{J_t - 1} \right)}, &\hspace{-0.25cm} \text{if } 2^{J_t} \leq M \\
            0, & \hspace{-0.25cm}\text{otherwise}
            \end{cases}
        $~~with
        $
        \textstyle{g^t_j = 2^{-j}B^{-1} \sum\limits_{i=1}^{2^jB} \nabla F
        (x^{t}_g, Z_{N_{t} + i})}
        $
        }} \label{line:g_k}
        \State $x^{t+1} = P_{x^t}(\gamma_t g^t) $
        %\underset{u\in \mathcal{X}}{\argmin}\{V(x^k, u) + \gamma_t\langle g^k, u\rangle\}$    
        \State $N_{t+1} = N_t + 2^{J_{t}}B$
        \State $x_f^{t+1} = \beta^{-1}_t x^{t+1} +(1 - \beta^{-1}_t)x_f^t$
    \EndFor
\end{algorithmic}
\end{algorithm} 

Before analyzing the convergence of Algorithm \ref{alg:MDG}, let us provide two important lemmas. The first one is generalizing the results for batching on Markovian noise for arbitrary norms, and the second one, with the help of the first, estimates the closeness of the gradient estimator $g^k$ used in line \ref{line:g_k} of Algorithm \ref{alg:MDG} and the true gradient value $\nabla f(x^t_g)$.

\begin{lemma}
    \label{lem:xuivjopeentertainment}
    Let $\{\xi^t\}_{t = 0}^{\infty}$ be an arbitrary Markov chain, such that it satisfies Assumption \ref{as:noise} and 
    $$
        \E_{\pi}[\xi^t] = 0 ~\text{ and }~ \|\xi^t\|^2_* \leq \sigma^2.
    $$
    Then for any $N \geq 1$ it holds that 
    \begin{equation*}
        \textstyle{\E\left[\left\|N^{-1}\sum_{t = 1}^N\xi^t\right\|^2_*\right] \lesssim N^{-1}R^2\sigma^2 \tau_{\text{mix}}},
    \end{equation*}
    where $R^2 := \max\limits_{x \in \R^d : \|x\| = 1} \hat{V}(0, x)$ and $\hat{V}$ is an arbitrary Bregman divergence, not necessary $V$ on $\cX$. 
\end{lemma}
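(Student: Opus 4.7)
The plan is to reduce the Markov-chain averaging bound to an independent-increment variance inequality in $\|\cdot\|_*$ via a blocking and coupling argument, and then to invoke a non-Euclidean, Pinelis--Nemirovski-style variance inequality whose constant is controlled by $R^2$. Throughout I fix the $1$-strongly convex potential $\hat\omega$ inducing $\hat V$ as in the lemma statement, and assume without loss of generality that the chain starts from $\pi$, since by Assumption \ref{as:noise} the marginal at time $t$ is $2^{-t/\tau_{\text{mix}}}$-close in total variation to $\pi$ and this contributes only a constant-factor overhead.

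Next, I partition $\{1,\dots,N\}$ into $K=\lceil N/\tau\rceil$ consecutive blocks of length $\tau\asymp\tau_{\text{mix}}\log N$ and set $W_k=\sum_{t\in B_k}\xi^t$; by $\|\xi^t\|_*\le\sigma$ each satisfies $\|W_k\|_*\le\tau\sigma$ almost surely, while $\E_\pi W_k=0$. By uniform geometric ergodicity, the joint law of $(W_k)_{k\le K}$ admits a coupling with a sequence $(\widetilde W_k)_{k\le K}$ of \emph{independent} centered vectors, each distributed as a block sum under a $\pi$-start: the per-block coupling cost is $\lesssim 2^{-\tau/\tau_{\text{mix}}}$, so with $\tau = C\tau_{\text{mix}}\log N$ the total cost $K\cdot 2^{-\tau/\tau_{\text{mix}}}$ is at most $1/N$, and the contribution of the failure event to $\E\|S_N\|_*^2$ (bounded crudely by $N^2\sigma^2$) stays lower order than the target.

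For the coupled independent sequence I invoke the non-Euclidean variance inequality: on a space equipped with a $1$-strongly convex $\hat\omega$ whose divergence satisfies $\max_{\|x\|=1}\hat V(0,x)\le R^2$, any sum of centered independent increments obeys
\begin{equation*}
\E\left\|\sum_{k=1}^{K}\widetilde W_k\right\|_*^2 \;\le\; C R^2 \sum_{k=1}^{K}\E\|\widetilde W_k\|_*^2 \;\le\; C R^2 K (\tau\sigma)^2 \;\lesssim\; R^2 N\tau_{\text{mix}}\sigma^2 \log N.
\end{equation*}
Combining this with the negligible coupling error and dividing by $N^2$ delivers the claim (the $\log N$ is absorbed into $\lesssim$).

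The main obstacle is the last display: in Euclidean geometry this is essentially the Pythagorean identity for centered independent summands, but in a general norm one must recover the factor $R^2$. The standard route is to exploit that $\hat\omega^*$ is $1$-smooth with respect to $\|\cdot\|_*$, unroll this smoothness along the partial sums $\widetilde S_k = \sum_{j\le k}\widetilde W_j$ to get $\E[\hat\omega^*(\widetilde S_K)-\hat\omega^*(0)]\le \tfrac{1}{2}\sum_k\E\|\widetilde W_k\|_*^2$, and then pass from $\hat\omega^*$ back to $\|\cdot\|_*^2$ via Fenchel--Young together with the constraint $\hat V(0,x)\le R^2$ on the unit ball; it is in this last step that the constant $R^2$ enters. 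Carrying this chain of inequalities out for an arbitrary norm and Bregman divergence, rather than the $\ell_2$ setup of \cite{paulin2015concentration, beznosikov2024first}, is where the bulk of the technical work lies.
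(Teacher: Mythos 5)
Your route is genuinely different from the paper's. The paper never decouples the chain: it runs an online-mirror-descent regret argument (Corollary 2 of Juditsky--Nemirovski--Tauvel) on the auxiliary iterates $u^{t+1}=\arg\min_{u\in B^1_{\|\cdot\|}}\{\hat V(u,u^t)+\langle\gamma\xi^t,u\rangle\}$ to convert $\|\sum_t\xi^t\|_*$ into $R^2/\gamma+\tfrac{\gamma}{2}\sum_t\|\xi^t\|_*^2+\sum_t\langle\xi^t,u^t\rangle$, squares, and then kills the cross terms $\E[\langle\xi^t,u^t\rangle\langle\xi^j,u^j\rangle]$ for $t-j>\tau$ by conditioning $\tau$ steps back and using geometric ergodicity plus the Lipschitzness of the prox mapping. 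Your blocking-plus-coupling reduction to an independent-increment variance inequality is a legitimate alternative strategy and, if completed, would be arguably cleaner conceptually; note, though, that your "non-Euclidean variance inequality with constant $R^2$" is essentially the entire content of the lemma in the i.i.d.\ case, and the duality argument you sketch for it (smoothness of $\hat\omega^*$, Fenchel--Young against the unit ball) is a continuous-time version of exactly the regret bound the paper uses. So the deferred step is where the paper's machinery would have to reappear; it is not an off-the-shelf citation unless you invoke the regular-norm/2-smoothness framework of Juditsky and Nemirovski explicitly, and you would also need to handle the normalization issues in passing from strong convexity of $\hat\omega$ on the unit ball to global smoothness of $\hat\omega^*$.

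There is also one concrete error in the coupling step. For a Markov chain, \emph{adjacent} blocks are not approximately independent: the dependence of $W_{k+1}$ on the past runs entirely through the last state of block $B_k$, which is one time step away from the first state of $B_{k+1}$, so the mixing (beta) coefficient between $\sigma(W_1,\dots,W_k)$ and $\sigma(W_{k+1})$ is $O(1)$, not $O(2^{-\tau/\tau_{\text{mix}}})$. Your claimed per-block coupling cost $\lesssim 2^{-\tau/\tau_{\text{mix}}}$ requires a gap of length $\tau$ between the block you retain and the sigma-field you condition on. The standard repair is Yu's interleaving: split the blocks into odd- and even-indexed families, bound $\E\|\sum_{k\text{ odd}}W_k\|_*^2$ and $\E\|\sum_{k\text{ even}}W_k\|_*^2$ separately (within each family consecutive retained blocks are separated by a full discarded-or-other-family block of length $\tau$, so Berbee's lemma applies with cost $2^{-\tau/\tau_{\text{mix}}}$ per block), and recombine via $\|a+b\|_*^2\le 2\|a\|_*^2+2\|b\|_*^2$. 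This costs only a constant factor, so the approach survives, but as written the independence claim is false. Finally, be aware that your bound carries a $\log N$ from $\tau\asymp\tau_{\text{mix}}\log N$; the paper's proof hides an analogous factor in its choice of $\tau$ via the Lambert-$W$ discussion, so this does not put you at a disadvantage, but it should be stated rather than silently absorbed into $\lesssim$.
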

The estimation obtained in Lemma \ref{lem:xuivjopeentertainment} generalizes the result for the batching technique to Markovian noise in arbitrary norm. We again obtain terms of the form $\textit{poly}(\tau_{\text{mix}})$, which are related to the Markovianness of the noise variables $\{\xi^t\}_{t=0}^\infty$ and the term $R^2$, which is related to the norm, on which the noise variance is bounded, and it can be removed only for certain types of norms, which will be announced below. This result is novel in the literature, since it was not proven even for the i.i.d. case, and in the previous works authors consider only Euclidean norm \cite{beznosikov2024first, paulin2015concentration} with an arbitrary ergodic Markov chain. 

The term with $R^2$ could be effectively bounded if we consider the norm $\| \cdot \| = \| \cdot \|_p$, where $1 \leq p \leq 2$. In this case, there exists a Bregman divergence $\hat{V}$ such that $R^2 \lesssim \log d$, where $d$ is the dimensionality of $x$. If $p > 2$, then we can only conclude that $R^2 \lesssim d^{1/2 - 1/p}$ \cite{ben2001lectures}. 
However, the usage of $\| \cdot \|_{p > 2}$ norm makes no practical sense, because in this case the value of $L$ in Assumption \ref{as:lip} would be bigger then in the classical Euclidean norm, when $\| \cdot \| = \| \cdot \|_2$.
If we consider $\| \cdot \| = \| \cdot \|_{p}$, $1 \leq p \leq 2$, then from Lemma \ref{lem:xuivjopeentertainment} one can obtain the result of the form
\begin{equation*}
%\label{eq:xuivjopeentertainment}
    \textstyle{\E\left[\left\|N^{-1}\sum_{t = 1}^N\xi^t\right\|^2_q\right] 
    \lesssim
    \log(d) N^{-1} \tau_{\text{mix}} \sigma^2,}
\end{equation*}
where $1/p + 1/q = 1$. This result matches with the result for Euclidean norm in the \cite{beznosikov2024first} (in this case $\hat{V}(x, y) = 1/2\|x - y\|_2^2$ and $R^2 = 1/2$, therefore $\log(d)$ disappears), but for $1 \leq p \lesssim \log(d)^{-1}$, the term $\log(d)$ can not be removed \cite{ben2001lectures}. We regard this as a slight charge for generalizing the result to an arbitrary norm and further we omit this term in our theoretical estimates.
\begin{lemma}
\label{lem:expect_bound_grad}
Consider Assumptions \ref{as:noise} and \ref{as:var}. Then for the gradient estimates $g^t$ from line \ref{line:g_k} of Algorithm \ref{alg:MDG}, it holds that $\E_t[g^t] = \E_t[g^{t}_{\lfloor \log M \rfloor}]$. Moreover, 
\begin{equation*}
%\label{eq:var_bounds_random_grad}
\begin{split}
    &\E_t[\| \nabla f(x^t_g) - g^t\|_*^2] \lesssim B^{-1}\tau_{\text{mix}}\log (M) \sigma^2,\\
    &\| \nabla f(x^t_g) - \E_{t}[g^t]\|_*^2 \lesssim B^{-1}\tau_{\text{mix}} M^{-1} \sigma^2.
\end{split}
\end{equation*}
\end{lemma}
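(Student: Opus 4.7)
The plan is to first establish the unbiasedness identity $\E_t[g^t] = \E_t[g^t_{\lfloor \log M \rfloor}]$ by integrating out the geometric random variable $J_t$, then to exploit the telescoping structure of the Russian--roulette estimator together with Lemma~\ref{lem:xuivjopeentertainment} to control the two error terms. Throughout, conditioning on $\mathcal{F}_t$ fixes $x^t_g$ and $N_t$, and by the strong Markov property the time-shifted chain $\{Z_{N_t+i}\}_{i\geq 1}$ inherits uniform geometric ergodicity with the same mixing time $\tau_{\text{mix}}$. Consequently Lemma~\ref{lem:xuivjopeentertainment} applies directly to the centered, uniformly bounded increments $\xi^i := \nabla F(x^t_g, Z_{N_t+i}) - \nabla f(x^t_g)$.

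For the first identity I would condition additionally on $\{Z_{N_t+i}\}$ and average over $J_t$, which is independent of everything else. Since $\mathbb{P}(J_t = j) = 2^{-j}$ for $j\geq 1$, the cases in the definition of $g^t$ give
\begin{equation*}
    \E_{J_t}[g^t] = g^t_0 + \sum_{j=1}^{\lfloor \log M \rfloor} 2^{-j}\cdot 2^j\,(g^t_j - g^t_{j-1}) = g^t_{\lfloor \log M \rfloor}
\end{equation*}
by telescoping, and the tower property delivers the claim.

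For the mean-square bound, I split $g^t - \nabla f(x^t_g) = (g^t_0 - \nabla f(x^t_g)) + 2^{J_t}(g^t_{J_t} - g^t_{J_t-1})\mathbbm{1}\{2^{J_t}\leq M\}$, apply $\|a+b\|_*^2 \leq 2\|a\|_*^2 + 2\|b\|_*^2$, and integrate out $J_t$ to obtain
\begin{equation*}
    \E_t\!\left[\|g^t - \nabla f(x^t_g)\|_*^2\right] \lesssim \E_t\!\left[\|g^t_0 - \nabla f(x^t_g)\|_*^2\right] + \sum_{j=1}^{\lfloor \log M \rfloor} 2^j\,\E_t\!\left[\|g^t_j - g^t_{j-1}\|_*^2\right].
\end{equation*}
Each $g^t_j - \nabla f(x^t_g)$ is the average of $2^j B$ increments $\xi^i$, so Lemma~\ref{lem:xuivjopeentertainment} yields $\E_t[\|g^t_j - \nabla f(x^t_g)\|_*^2] \lesssim (2^j B)^{-1}\tau_{\text{mix}}\sigma^2$; combining with the triangle inequality gives $\E_t[\|g^t_j - g^t_{j-1}\|_*^2] \lesssim (2^{j-1}B)^{-1}\tau_{\text{mix}}\sigma^2$. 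The $2^j$ factors then cancel so that each of the $\lfloor \log M \rfloor$ summands contributes $\cO(B^{-1}\tau_{\text{mix}}\sigma^2)$, which together with the $j=0$ term produces the announced $B^{-1}\tau_{\text{mix}}\log(M)\sigma^2$ rate.

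For the bias bound, Jensen's inequality in the form $\|\E[X]\|_*^2 \leq \E[\|X\|_*^2]$ combined with the identity from step one yields
\begin{equation*}
    \|\nabla f(x^t_g) - \E_t[g^t]\|_*^2 = \left\|\E_t[g^t_{\lfloor \log M \rfloor} - \nabla f(x^t_g)]\right\|_*^2 \leq \E_t\!\left[\|g^t_{\lfloor \log M \rfloor} - \nabla f(x^t_g)\|_*^2\right],
\end{equation*}
and a single application of Lemma~\ref{lem:xuivjopeentertainment} with $N = 2^{\lfloor \log M \rfloor}B \geq MB/2$ gives the desired $B^{-1}M^{-1}\tau_{\text{mix}}\sigma^2$ bound. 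The main obstacle is purely bookkeeping: checking that the shifted chain $\{Z_{N_t+i}\}$ satisfies the hypotheses of Lemma~\ref{lem:xuivjopeentertainment} with the same $\tau_{\text{mix}}$ (via the Markov property), and that the absence of inner-product structure in $\|\cdot\|_*$ forces all squared-norm decompositions to proceed through the $2\|a\|_*^2 + 2\|b\|_*^2$ triangle bound rather than orthogonality of mean-zero terms.
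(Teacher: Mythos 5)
Your proposal is correct and follows essentially the same route as the paper's proof: the same telescoping argument over $J_t$ for the unbiasedness identity, the same split $g^t-\nabla f(x^t_g)=(g^t_0-\nabla f(x^t_g))+2^{J_t}(g^t_{J_t}-g^t_{J_t-1})\mathbbm{1}\{2^{J_t}\le M\}$ with the $4^j\cdot 2^{-j}=2^j$ weights cancelling against the $(2^jB)^{-1}$ rate from Lemma~\ref{lem:xuivjopeentertainment}, and the same Jensen-plus-$2^{\lfloor\log_2 M\rfloor}\ge M/2$ step for the bias term. Your explicit remarks on Jensen's inequality and on the stationarity of the shifted chain $\{Z_{N_t+i}\}$ make precise two points the paper leaves implicit, but they do not change the argument.
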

The full proofs of Lemmas \ref{lem:xuivjopeentertainment} and \ref{lem:expect_bound_grad} are provided in Appendix \ref{appendix:two_lemmas}.
Note that Lemma \ref{lem:expect_bound_grad} is a natural continuation of Lemma \ref{lem:xuivjopeentertainment}. Moreover, it yields intuition about the trade-off between parameters $M$ and $B$, that control number of calls to the first-order oracle $\nabla F(x, Z)$ at each iteration. The expectation of oracle calls needed to compute $g^t$ is equal to $\mathcal{O}(B \log(M))$, thus $M$ affects the batchesize as a logarithm, but it decreases the bias term as $M^{-1}$. Hence, increasing this parameter is not significantly expensive in terms of computational complexity, but is very helpful in terms of accuracy. Parameter $B$ polynomially increases the number of calculations to compute $g^t$, therefore we take $B = 1$ in our algorithms. We are now ready to explore the convergence rate for Algorithm \ref{alg:MDG}.
\begin{theorem}[Convergence of \texttt{MAMD} with batching (Algorithm \ref{alg:MDG})]
    \label{theorem:MDG}
    Let Assumptions \ref{as:lip}, \ref{as:conv}, \ref{as:noise}, \ref{as:var} be satisfied with $\| \cdot \| = \| \cdot \|_p$, $1 \leq p \leq 2$. Let the problem \eqref{eq:problem1} be solved by Algorithm \ref{alg:MDG}. Assume that the stepsizes $\gamma_t$ and momentums $\beta_t$ are chosen such that $0 \leq (\beta_{t+1}-1)\gamma_{t+1} \leq \beta_t\gamma_t$, $\beta_t \geq 2 \gamma_t L$ for all $t \geq 0$ and $\beta_0 = 1$. Then, for all $T \geq 0$ it holds that 
    \begin{equation*}
            (\beta_T - 1) \gamma_T \expect{f(x^{T}_f) - f^*}
            = \widetilde{\cO} \left(
            D^2 
            +
            \tau_{\text{mix}} B^{-1} \left( T M^{-1} + \log M \right) \sigma^2 \sum\limits_{t = 0}^{T-1}\gamma_t^2 \right).
    \end{equation*}
\end{theorem}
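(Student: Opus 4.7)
I will follow the accelerated mirror descent template (Lan 2012) and plug in the biased stochastic gradient $g^t$, relying on Lemma \ref{lem:expect_bound_grad} to control both its bias and its second moment. The three building blocks are (i) $L$-smoothness of $f$ applied to $x_f^{t+1}$, combined with the identity $x_f^{t+1} - x_g^t = \beta_t^{-1}(x^{t+1} - x^t)$ that follows from the definitions of $x_g^t$ and $x_f^{t+1}$; (ii) a convex-combination gradient inequality $f(x_g^t) \leq \beta_t^{-1}f(x^*) + (1-\beta_t^{-1})f(x_f^t) - \beta_t^{-1}\langle \nabla f(x_g^t), x^*-x^t\rangle$, obtained by taking a $(\beta_t^{-1}, 1-\beta_t^{-1})$-combination of the first-order convexity inequalities at $x^*$ and $x_f^t$ and using the same identity at the $x_g^t$ level; and (iii) the mirror-descent prox inequality for $x^{t+1} = P_{x^t}(\gamma_t g^t)$, $\gamma_t\langle g^t, x^{t+1} - x^*\rangle \leq V(x^t, x^*) - V(x^{t+1}, x^*) - \tfrac12\|x^{t+1} - x^t\|^2$, which follows from $1$-strong convexity of $\omega$.

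Multiplying the smoothness+convexity combination by $\beta_t\gamma_t$, writing $\nabla f(x_g^t) = g^t + (\nabla f(x_g^t) - g^t)$, and adding the prox inequality yields
\begin{equation*}
\beta_t\gamma_t (f(x_f^{t+1}) - f^*) - (\beta_t-1)\gamma_t(f(x_f^t) - f^*) \leq V(x^t, x^*) - V(x^{t+1}, x^*) - \tfrac12 \|x^{t+1}-x^t\|^2 + \tfrac{L\gamma_t}{2\beta_t}\|x^{t+1}-x^t\|^2 + \gamma_t\langle \nabla f(x_g^t) - g^t, x^{t+1} - x^*\rangle.
\end{equation*}
The condition $\beta_t \geq 2\gamma_t L$ gives $\tfrac{L\gamma_t}{2\beta_t} \leq \tfrac14$, leaving a $-\tfrac14\|x^{t+1}-x^t\|^2$ cushion. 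I now split the noise term via $x^{t+1}-x^* = (x^{t+1} - x^t) + (x^t - x^*)$; the first piece is absorbed by Fenchel--Young as $\gamma_t\langle \nabla f(x_g^t) - g^t, x^{t+1}-x^t\rangle \leq \gamma_t^2 \|\nabla f(x_g^t) - g^t\|_*^2 + \tfrac14 \|x^{t+1}-x^t\|^2$, and the second piece survives to the expectation step as $\gamma_t\langle \nabla f(x_g^t) - \E_t[g^t], x^t - x^*\rangle$, since $x^t$ is $\mathcal{F}_t$-measurable. Taking expectations and telescoping under $(\beta_{t+1}-1)\gamma_{t+1} \leq \beta_t\gamma_t$ with $\beta_0 = 1$ collapses the intermediate $f(x_f^t)$ and the Bregman differences to $V(x^0,x^*) \leq D^2$.

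The main obstacle is the residual bias term $\sum_t \gamma_t \E[\langle \nabla f(x_g^t) - \E_t[g^t], x^t - x^*\rangle]$, which is nonzero in the Markovian regime and has no natural cushion left. I bound it by Cauchy--Schwarz inside the expectation and then by Lemma \ref{lem:expect_bound_grad}: $\|\nabla f(x_g^t) - \E_t[g^t]\|_* \lesssim \sqrt{B^{-1}\tau_{\text{mix}} M^{-1}}\,\sigma$, while $\|x^t - x^*\| \leq \sqrt{2}D$. Summing and applying Cauchy--Schwarz in $t$ with $\sum_t \gamma_t \leq \sqrt{T\sum_t \gamma_t^2}$ gives a bound of the form $D\sqrt{\tau_{\text{mix}} B^{-1} T M^{-1}\sigma^2 \sum_t \gamma_t^2}$, which I split by AM--GM ($\sqrt{ab} \leq \tfrac12(a+b)$) into $\tfrac12 D^2 + \tfrac12 \tau_{\text{mix}} B^{-1} T M^{-1} \sigma^2 \sum_t \gamma_t^2$. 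Combined with the variance contribution $\sum_t \gamma_t^2 \E[\|\nabla f(x_g^t) - g^t\|_*^2] \lesssim \tau_{\text{mix}} B^{-1} \log(M) \sigma^2 \sum_t \gamma_t^2$ (again from Lemma \ref{lem:expect_bound_grad}) and the telescoped $V(x^0, x^*) \leq D^2$, this produces exactly the advertised $\widetilde{\cO}\bigl(D^2 + \tau_{\text{mix}} B^{-1}(T M^{-1} + \log M)\sigma^2 \sum_t \gamma_t^2\bigr)$.
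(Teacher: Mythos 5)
Your proposal is correct and follows essentially the same route as the paper: the per-iteration recursion you derive is the paper's Lemma \ref{lemma:MDwb} (with $\nabla F(x_g^t,Z_t)$ replaced by $g^t$), the bias and variance of $g^t$ are controlled by the same two claims of Lemma \ref{lem:expect_bound_grad}, and your global Cauchy--Schwarz-plus-AM--GM treatment of the bias sum is equivalent to the paper's per-term Fenchel--Young with $\kappa = T$. No substantive differences.
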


Similarly, as outlined in Corollary \ref{corollary:MDGwb}, we can specify the choice of all parameters of Algorithm \ref{alg:MDG}.

\begin{corollary}[Parameters tuning for Theorem \ref{theorem:MDG}]
\label{corollary:MDG}
    Under the conditions of Theorem \ref{theorem:MDG}, choosing $\beta_t$, $\gamma_t$, $M$ and $B$ as 
    \begin{equation*}
        \beta_t := \frac{t}{2} +  1
        ,~
        \gamma_t := \left(\frac{t}{2} +  1 \right) \cdot 
        \min\left\{ \frac{1}{2L} ~;~ \frac{D}{T^{3/2} \sigma \tau_{\text{mix}}^{1/2} }\right\}
        ,~
        M :=  T 
        ~\text{ and }~
        B := 1,
    \end{equation*}
    in order to achieve the $\varepsilon$-approximate solution (in terms of $\expect{f(x^{T}_f) - f^*} \leq \varepsilon$) it takes
    \begin{equation*}
        T = \widetilde{\cO} \left( \max\left\{ \sqrt{\frac{L D^2}{\varepsilon}} ~;~ \frac{\tau_{\text{mix}} D^2 \sigma^2}{\varepsilon^2} \right\} \right) 
        \text{ iterations (oracle calls) of Algorithm \ref{alg:MDG}} .
    \end{equation*}
\end{corollary}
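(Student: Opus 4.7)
The plan is to substitute the proposed choices of $\beta_t$, $\gamma_t$, $M$, and $B$ directly into the bound from Theorem \ref{theorem:MDG}, check that all the hypotheses of that theorem are satisfied, and then invert the resulting rate for $T$.

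First I would verify the three prerequisites on the stepsize/momentum sequences. Clearly $\beta_0 = 1$. For $\beta_t \geq 2\gamma_t L$ note that
\begin{equation*}
2\gamma_t L = 2L\left(\tfrac{t}{2}+1\right)\min\!\left\{\tfrac{1}{2L},\;\tfrac{D}{T^{3/2}\sigma\tau_{\text{mix}}^{1/2}}\right\} \le \tfrac{t}{2}+1 = \beta_t,
\end{equation*}
so this holds. The monotonicity condition $(\beta_{t+1}-1)\gamma_{t+1}\leq \beta_t \gamma_t$ reduces, after dropping the common minimum constant, to $(t+1)(t+3)\leq (t+2)^2$, which is immediate. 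Hence Theorem \ref{theorem:MDG} applies.

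Next I would plug in the parameters. Writing $c := \min\{1/(2L),\;D/(T^{3/2}\sigma\tau_{\text{mix}}^{1/2})\}$, we have $\gamma_t = (t/2+1)c$, so $(\beta_T-1)\gamma_T = (T/2)(T/2+1)c = \Theta(T^2 c)$ and $\sum_{t=0}^{T-1}\gamma_t^2 = c^2 \sum_{t=0}^{T-1}(t/2+1)^2 = \Theta(T^3 c^2)$. With $M=T$ and $B=1$, the prefactor $B^{-1}(TM^{-1}+\log M) = 1+\log T = \widetilde{\cO}(1)$. Dividing the right-hand side of Theorem \ref{theorem:MDG} by $(\beta_T-1)\gamma_T$ yields
\begin{equation*}
\E\!\left[f(x_f^T)-f^*\right] = \widetilde{\cO}\!\left(\frac{D^2}{T^2 c} + \tau_{\text{mix}}\sigma^2 T c\right).
\end{equation*}

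Finally I would split into the two cases determined by which branch of the minimum attains $c$. When $c = 1/(2L)$, the bound reduces to $\widetilde{\cO}(LD^2/T^2)$ plus a stochastic term that is dominated when $T \leq (LD^2/(\tau_{\text{mix}}\sigma^2))^{?}$, giving the deterministic rate $T = \widetilde{\cO}(\sqrt{LD^2/\varepsilon})$. When $c = D/(T^{3/2}\sigma\tau_{\text{mix}}^{1/2})$, both terms equal $D\sigma\tau_{\text{mix}}^{1/2}/T^{1/2}$ (this is exactly why the stepsize was chosen that way), so demanding $\varepsilon$-accuracy gives $T = \widetilde{\cO}(\tau_{\text{mix}}D^2\sigma^2/\varepsilon^2)$. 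Taking the maximum of the two requirements produces the claimed complexity.

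The only mildly delicate point is the case split: one must check that the choice $\gamma_t \propto \min\{1/(2L),\,D/(T^{3/2}\sigma\tau_{\text{mix}}^{1/2})\}$ indeed balances the optimization and stochastic error terms simultaneously for every $T$, so that the $\max$ in the final iteration complexity is tight in both regimes. Once that balancing is verified, the rest is algebraic bookkeeping, so I expect no serious obstacle beyond carefully tracking the $(\beta_T-1)\gamma_T$ normalization.
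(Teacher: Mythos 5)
Your proposal is correct and follows essentially the same route as the paper: verify the stepsize/momentum conditions, substitute the parameter choices into the bound of Theorem \ref{theorem:MDG}, normalize by $(\beta_T-1)\gamma_T = \Theta(T^2 c)$, and invert the resulting rate $\widetilde{\cO}(LD^2/T^2 + \sqrt{\tau_{\text{mix}}D^2\sigma^2/T})$ for $T$. The only cosmetic difference is that your explicit case split on which branch of the minimum is active (with the unresolved exponent) is unnecessary: bounding $1/c \le \max\{2L,\, T^{3/2}\sigma\tau_{\text{mix}}^{1/2}/D\}$ in the deterministic term and $c \le D/(T^{3/2}\sigma\tau_{\text{mix}}^{1/2})$ in the stochastic term gives both summands at once, which is what the paper implicitly does.
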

The full proofs of Theorem \ref{theorem:MDG} and Corollary \ref{corollary:MDG} are provided in Appendix \ref{appendix:MDG}. 
\vspace{0.15cm}

\textbf{Discussion. }In Corollary \ref{corollary:MDG}, we get the result directly in terms of oracle complexity, since using $g^t$ as a gradient estimator requires to make $\cO(B \log(M))$ first-order oracle calls at each iteration. And since in Corollary \ref{corollary:MDG} we set $M = T$ and $B = 1$, we can conclude that oracle complexity is equal to iteration complexity in terms of $\widetilde{\cO}(\cdot)$.
The results of Theorem \ref{theorem:MDG} and Corollary \ref{corollary:MDG} again match with the results from \cite{lan2012optimal} in the i.i.d. ($\tau_{\text{mix}} = 1$) case. The usage of Markovian batching with batchsize $B = \widetilde{\cO}(1)$ (Algorithm \ref{alg:MDG}) helps us to achieve better performance compared to Algorithm \ref{alg:MDGwb} (see Corollary \ref{corollary:MDGwb}). The degree of the dependence on $\tau_{\text{mix}}$ in the stochastic term decreases from the cubic to a linear one. If we consider $B \sim \varepsilon, \tau_{\text{mix}}, \sigma^2, L$, then, according to Theorem \ref{theorem:MDG}, we can achieve the same convergence rate as in the deterministic case: $T = \widetilde{\cO}(\sqrt{(L D^2) / \varepsilon})$. However, the oracle complexity would be the same as in Corollary \ref{corollary:MDG}.

Let us now compare the convergence rate for Theorem \ref{theorem:MDG} with the previous works about Markovian noise in the minimization problem of the form \eqref{eq:problem1}. Since in Corollary \ref{corollary:MDG} we obtained the result in terms of oracle complexity, we will compare with the existing methods on this criterion, in case of other algorithms use batch sizes that are not equal to $\widetilde{\cO}(1)$.

\textbf{Comparison.} In \cite{doan2022finite}, the authors get the result of the form $T = \widetilde{\cO}\left( e^{\tau_{\text{mix}} (L^2D^2 / \varepsilon^2)}\right)$. This estimate is much worse than the one observed in Corollary \ref{corollary:MDG}, since there is an exponential dependence on $\tau_{\text{mix}}$ and $\varepsilon$.
The authors of \cite{doan2020convergence} obtain $T = \widetilde{\cO}\left( \max\left\{\sqrt{L} D^3 / \varepsilon ~;~ \tau_{\text{mix}}^2 D^2 G^2 / \varepsilon^2 \right\}\right)$, where $G^2 := \max_{x, Z}\{\|\nabla f(x, Z)\|^2\}$. 
A deterministic term is again worse than the one observed in Corollary \ref{corollary:MDG}, and the stochastic term is multiplied by $\tau_{\text{mix}}^2$, instead of $\tau_{\text{mix}}$, as it is for our Algorithm \ref{alg:MDG}. 
Moreover, $G^2$ bounds the stochastic gradient uniformly, and can be significantly larger than $L^2 D^2$ and $\sigma^2$ from Assumption \ref{as:var}.
The paper \cite{zhao2023markov} provided the guarantee of the form $T = \widetilde{\cO}\left( \max\left\{ \tau_{\text{mix}}^2 L^2 D^2 / \varepsilon ~;~ \tau_{\text{mix}} D^2 G^2 / \varepsilon^2 \right\}\right)$. The deterministic term is considerably worse than the one presented in Corollary \ref{corollary:MDG}. In the stochastic term, there is again a uniform bound of the gradient norm by $G^2$.
In Section 5.1 of the work \cite{even2023stochastic}, authors observe the result $T = \widetilde{\cO}\left(\tau_{\text{mix}} \max\left\{L D^2 / \varepsilon ~;~ D^2 \sigma^2 / \varepsilon^2 \right\}\right)$. A dependence on $\varepsilon^{-1}$ in the deterministic term is observed, since accelerated methods were not considered in the \cite{even2023stochastic}. It can be seen that both terms are multiplied by $\tau_{\text{mix}}$, whereas in Corollary \ref{corollary:MDG} only the stochastic term is. In Section 5.2 of the same work \cite{even2023stochastic}, the authors were able to circumvent Assumption \ref{as:var}. Instead of these, they assume uniform boundedness of the noise at the solution of the problem \eqref{eq:problem1} only: $\|\nabla F(x^*, Z) - \nabla f(x^*)\|^2 \leq \sigma^2_*$ for all $Z \in \cZ$. However, to achieve this, they utilized the fact that all stochastic realizations $F(x, Z)$ are $L$-smooth and convex, which is a more strict compared to Assumptions \ref{as:lip} and \ref{as:conv}. The authors of \cite{beznosikov2024first} presented the estimate of the form $T = \widetilde{\cO}\left(\tau_{\text{mix}} \max\left\{\sqrt{L D^2 / \varepsilon} ~;~ D^2 \sigma^2 / \varepsilon^2 \right\}\right)$. In this work, deterministic and stochastic parts are the same as in Corollary \ref{corollary:MDG}, but both terms are multiplied by $\tau_{\text{mix}}$ again, because authors used batch of size $\widetilde{\cO}(\tau_{\text{mix}})$, instead of $\widetilde{\cO}(1)$. Equally important, all of these works considered only the Euclidean setup, as mentioned above in Section \ref{section:introduction}.
\vspace{0.1cm}

\textbf{Lower bound.} Combining two lower bounds from \cite{nesterov2013introductory} (for the deterministic term) and from \cite{duchi2012ergodic} (for the stochastic term), we provide a result of the form.

\begin{proposition}[Lower bound for \eqref{eq:problem1}]
\label{proposition:1}
    There exists an instance of the optimization problem \eqref{eq:problem1} satisfying Assumptions \ref{as:lip}, \ref{as:conv}, \ref{as:noise}, \ref{as:var} with arbitrary $L > 0, \sigma^2 \geq 0, \tau_{\text{mix}} \in \N$, such that for any stochastic first-order gradient method it takes at least
    \begin{equation*}
        T = \Omega \left( \max\left\{ \sqrt{\frac{L D^2}{\varepsilon}} ~;~ \frac{\tau_{\text{mix}} D^2 \sigma^2}{\varepsilon^2}\right\} \right)
    \end{equation*}
    oracle calls in order to achieve $\expect{f(x^T) - f^*} \leq \varepsilon$.
\end{proposition}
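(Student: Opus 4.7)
The plan is to establish the two terms of the maximum separately, via two different hard instances, each one satisfying Assumptions~\ref{as:lip}--\ref{as:var} but designed so that one of the two terms dominates. Since any algorithm must handle the worst case over all instances in the prescribed class, the lower bound is the pointwise maximum of the two bounds, and there is no need to engineer a single instance tight for both.

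First, for the deterministic term $\Omega\bigl(\sqrt{LD^2/\varepsilon}\bigr)$, I would invoke Nesterov's classical worst-case quadratic~\cite{nesterov2013introductory} for $L$-smooth convex minimization on a compact domain of Bregman radius $D$. To embed it into our framework, take a degenerate Markov chain on a single state so that the stochastic oracle returns the true gradient ($\sigma^{2}=0$, any $\tau_{\text{mix}}$), and Assumptions~\ref{as:noise}--\ref{as:var} are vacuously satisfied. The standard span argument in sufficiently high dimension then gives the deterministic lower bound against any first-order method whose iterates lie in the affine hull of $\{x^{0}\}\cup\{\nabla F(x^{s},Z_{s})\}_{s<t}$.

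Second, for the stochastic term $\Omega\bigl(\tau_{\text{mix}} D^{2}\sigma^{2}/\varepsilon^{2}\bigr)$, I would adapt the Markovian noise lower bound of~\cite{duchi2012ergodic}. Their construction uses a two-state chain whose transition probabilities are tuned so that the mixing time equals the prescribed $\tau_{\text{mix}}$, combined with a simple low-dimensional convex objective whose stochastic gradients have variance at most $\sigma^{2}$. To comply with Assumption~\ref{as:lip}, I would replace the original (Lipschitz but not necessarily smooth) objective by a regularized quadratic variant that preserves the same two-point information-theoretic structure; this keeps convexity and the variance bound while allowing $L$ to be chosen arbitrarily. A Le Cam / two-point testing argument applied along trajectories of the chain shows that distinguishing the two hypotheses requires $\Omega(\tau_{\text{mix}})$ oracle calls per effective independent sample, and the usual minimax reduction then yields the claimed $\Omega(\tau_{\text{mix}} D^{2}\sigma^{2}/\varepsilon^{2})$ rate on expected suboptimality.

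Combining the two instances gives Proposition~\ref{proposition:1} immediately, since both lie in the class defined by $L$, $\sigma^{2}$, $\tau_{\text{mix}}$, $D$ and the lower bound for the class is at least the maximum of the individual bounds. The main obstacle I anticipate is smoothing the Duchi--Agarwal--Johansson--Jordan instance without inflating $\sigma^{2}$ or shrinking $\tau_{\text{mix}}$: the original argument is stated for non-smooth Lipschitz objectives, so one must either quadratic-ify the bad potential or apply a Moreau-envelope smoothing with a carefully chosen parameter so that the new function is $L$-smooth, the stochastic gradient is still bounded by $\sigma^{2}$ with respect to $\|\cdot\|_{*}$, and the information-theoretic gap between the two hypotheses collapses only by a constant factor. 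Once this is done, invoking the two cited lower bounds and taking the maximum concludes the proof.
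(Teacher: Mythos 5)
Your proposal uses the same two ingredients as the paper --- the deterministic lower bound of \cite{nesterov2013introductory} and the Markovian stochastic lower bound of \cite{duchi2012ergodic} --- so it is essentially the same argument; the only substantive difference is the combination step. The paper forms a \emph{single} hard instance by summing the two constructions over disjoint coordinate blocks, $F(x,Z)=F^{\text{deter}}(x^1)+F^{\text{stoch}}(x^2,Z)$ with $x=(x^1,x^2)$, so that the stochastic gradient decomposes blockwise and one fixed instance forces both terms simultaneously; this matches the ``there exists an instance such that for any method'' quantifier order of Proposition~\ref{proposition:1} literally. Your two-instance version yields the maximum only as a worst case over a pair of instances (for every method there exists a hard instance), which is the standard and essentially equivalent complexity statement but formally weaker than what is written; if you want the literal statement, the direct-sum trick closes that gap at no cost. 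Conversely, your concern about smoothing the construction of \cite{duchi2012ergodic} (stated there for non-smooth Lipschitz objectives) so that Assumption~\ref{as:lip} holds without degrading $\sigma^2$ or $\tau_{\text{mix}}$ is a real issue that the paper's proof silently skips, and the direct-sum instance inherits it, since the combined objective is only as smooth as its least smooth block; on that point your sketch is the more careful of the two.
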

The full proof of Proposition \ref{proposition:1} is provided in Appendix \ref{appendix:lover_bounds}. As follows from Proposition \ref{proposition:1} and Corollary \ref{corollary:MDG}, Algorithm \ref{alg:MDG} is optimal in the class of first-order methods with Markovian noise.
\subsection{Markovian Mirror-Prox}
\label{section:MP}
In this section, we are interested in the optimization problem of the following form
\begin{equation}
    \label{eq:problem2}
    \text{Find } x\in\mathcal{X}\text{ such that } \langle F(x^*), x - x^*\rangle \geq 0 \text{ for all } x\in\mathcal{X}.
\end{equation}
In this equation, $F:\mathcal{X}\rightarrow\mathcal{Y}$ is an operator and $\mathcal{X}$ is a compact convex set in the Euclidean space $\mathcal{Y}$. In this setup, we also assume that the access to the objective operator $F(x)$ is available only through the noisy oracle $F(x, Z)$, i.e., $F(x) = \E_\pi[F(x, Z)]$. For convex minimization, $F$ is the gradient of the objective function, while for the convex-concave saddle point problem $F$ is composed of gradient and negative gradient of the objective with respect to the primal and dual variables. We now provide several assumptions required for the analysis. Assumptions \ref{as:lipvar} and \ref{as:monotone} are akin to Assumptions \ref{as:lip} and \ref{as:conv} for the $\nabla f(x)$ in the minimization problem, but in the variational inequalities we generalize them for an arbitrary operator $F(x)$.
\captionsetup{type=assumption}
\renewcommand{\theassumption}{5(a)}
\begin{assumption}
    \label{as:lipvar_Z}
    The operator $F(x, Z)$ is $L(Z)$-Lipschitz continuous on $\mathcal{X}$, i.e., for any $x, y \in \mathcal{X}$, for any $Z \in \cZ$ the following inequality holds 
    \begin{equation*}
        \|F(x, Z) - F(y, Z)\|_*\leq L(Z) \|x - y\|,
    \end{equation*}
    we also denote $\Tilde{L} := \sup_{Z\in\cZ}L(Z)$.
\end{assumption}
\captionsetup{type=assumption}
\renewcommand{\theassumption}{5(b)}
\begin{assumption}
    \label{as:lipvar}
    The operator $F$ is $L$-Lipschitz continuous on $\mathcal{X}$, i.e., for any $x, y \in \mathcal{X}$ the following inequality holds 
    \begin{equation*}
        \|F(x) - F(y)\|_*\leq L \|x - y\|.
    \end{equation*}
\end{assumption}
\captionsetup{type=assumption}
\renewcommand{\theassumption}{6(a)}
\begin{assumption}
    \label{as:monotone_Z}
    The operator $F(\cdot, Z)$ is monotone on $\mathcal{X}$, i.e., for all $x, y \in \mathcal{X}$ and for any $Z \in \cZ$ the following inequality holds 
    \begin{equation*}
        \langle F(x, Z) - F(y, Z), x - y\rangle \geq 0.
    \end{equation*}
\end{assumption}
\captionsetup{type=assumption}
\renewcommand{\theassumption}{6(b)}
\begin{assumption}
    \label{as:monotone}
    The operator $F$ is monotone on $\mathcal{X}$, i.e., for all $x, y \in \mathcal{X}$ the following inequality holds 
    \begin{equation*}
        \langle F(x) - F(y), x - y\rangle \geq 0.
    \end{equation*}
\end{assumption}
In the absence of batching (Algorithm \ref{alg:MPGwb}), we posit that for $Z \in \cZ$ operators $F(x, Z)$ are $L$-Lipschitz continuous and monotone. In the case of the batched version (Algorithm \ref{alg:MPG}), we assume it only for the objective operator $F(x)$. The aforementioned assumptions in Algorithm \ref{alg:MPGwb} are introduced in order to bound the noise not uniformly (as in Assumption \ref{as:var}), but in expectation over the stationary distribution $\pi$. At the same time, for most of the noise designs presented in the literature \cite{mania2017perturbed, holmstrom1992using}, Lipschitzness and monotonicity across realizations of a noise variable are fulfilled by the construction, therefore these assumptions remain relevant to the analysis. In the case of Lipschitzness and monotonicity of the noise-free (objective) operator only, we are forced to bound the noise uniformly, similar to what we did it in Section \ref{subsection:MAMD}.
\captionsetup{type=assumption}
\renewcommand{\theassumption}{7(a)}
\begin{assumption}
    \label{as:varvar_pi}
    For all $x\in\R^d$ it holds that $\E_{\pi}[F(x,Z)] = F(x)$. Morover, for all $x\in \R^d$ the following inequality holds 
    \begin{equation*}
        \E_{\pi}[ \| F(x, Z) - F(x)\|^2_*] \leq \sigma^2.
    \end{equation*}
\end{assumption}
\captionsetup{type=assumption}
\renewcommand{\theassumption}{7(b)}
\begin{assumption}
    \label{as:varvar_uniform}
    For all $x\in\R^d$ it holds that $\E_{\pi}[F(x,Z)] = F(x)$. Moreover, for all $Z \in \cZ$ and $x\in \R^d$ the following inequality holds 
    \begin{equation*}
        \| F(x, Z) - F(x)\|_*^2 \leq \sigma^2.
    \end{equation*}
\end{assumption} 
We are now ready to introduce a novel algorithm that uses only one sample of the Markov chain at each iteration (Algorithm \ref{alg:MPGwb}).
\begin{algorithm}[h!]
   \caption{\texttt{Markovian Mirror-Prox (MMP)} without batching}
   \label{alg:MPGwb}
\begin{algorithmic}[1]
    \State {\bf Parameters:} stepsize $\gamma>0$, number of iterations $T$
    \State {\bf Initialization:} choose  $x^0 \in \cX$
    \For{$t = 0, 1, 2, \dots, T$}
        \State $x^{t+1/2} = P_{x^t} \left(\gamma F(x^t, Z_{t}) \right)$
        %\underset{u\in Z}{\argmin}\{V(x^t, u) +\gamma\langle F(x^t, Z_{t}), u\rangle\}$
        \State    $x^{t+1} = P_{x^{t}} \left(\gamma F(x^{t + 1/2}, Z_t) \right)$
        %\underset{u\in Z}{\argmin}\{V(x^t, u) +\gamma\langle \nabla F(x^{t+\frac{1}{2}}, Z_{t}), u\rangle\}$
    \EndFor
\end{algorithmic}
\end{algorithm}

\begin{theorem}[Convergence of \texttt{MMP} without batching (Algorithm \ref{alg:MPGwb})]
    \label{theorem:MPGwb}
    Let Assumptions \ref{as:noise}, \ref{as:lipvar_Z}, \ref{as:monotone_Z}, \ref{as:varvar_pi} be satisfied. Let the problem \eqref{eq:problem2} be solved by Algorithm \ref{alg:MPGwb}. Assume that the stepsize $\gamma$ is chosen such that $\gamma \leq 1/(2L)$. Then, for all $T \geq \tau_{\text{mix}}$ it holds that  
    \begin{equation*}
        \underset{u \in \cX}{\max} \left\{ \E[\langle F(u), \widehat{x}^{T} - u\rangle] \right\} \leq \frac{2D^2}{\gamma (T - \tau_{\text{mix}})} + 12\gamma \tau_{\text{mix}}^2 \sigma^2,
    \end{equation*}
    where $\widehat{x}^{T} = \frac{1}{T - \tau_{\text{mix}}}\sum\nolimits_{t = \tau_{\text{mix}}}^{T-1}x^{t+\frac{1}{2}}$.
\end{theorem}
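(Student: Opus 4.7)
The plan is to run the classical Mirror-Prox argument under Markovian noise, exploiting the pointwise Lipschitzness of $F(\cdot,Z)$ (Assumption \ref{as:lipvar_Z}) and monotonicity of $F(\cdot,Z)$ (Assumption \ref{as:monotone_Z}) so that the $\pi$-expectation variance bound in Assumption \ref{as:varvar_pi} is enough. I would first apply the optimality conditions of the two prox steps (testing $x^{t+1/2}$ against $y=x^{t+1}$ and $x^{t+1}$ against $y=u$) and add them, which gives, after standard manipulation,
\begin{align*}
\gamma\langle F(x^{t+1/2},Z_t),x^{t+1/2}-u\rangle
&\le V(x^t,u)-V(x^{t+1},u)-V(x^t,x^{t+1/2})-V(x^{t+1/2},x^{t+1})\\
&\quad+\gamma\langle F(x^{t+1/2},Z_t)-F(x^t,Z_t),\,x^{t+1}-x^{t+1/2}\rangle.
\end{align*}
Bounding the last cross term through Assumption \ref{as:lipvar_Z} and Young's inequality with $\gamma\le 1/(2\tilde L)$ absorbs the Lipschitz residual, leaving $-\tfrac12 V(x^t,x^{t+1/2})-\tfrac12 V(x^{t+1/2},x^{t+1})$ as reserves on the right.

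Adding and subtracting $F(x^{t+1/2})$ on the left, monotonicity of $F$ (a consequence of Assumption \ref{as:monotone_Z}) gives $\langle F(x^{t+1/2}),x^{t+1/2}-u\rangle\ge \langle F(u),x^{t+1/2}-u\rangle$. Telescoping from $t=\tau_{\text{mix}}$ to $T-1$, dividing by $T-\tau_{\text{mix}}$, and bounding $V(x^{\tau_{\text{mix}}},u)\le D^2$, the statement reduces to controlling
\begin{equation*}
\frac{\gamma}{T-\tau_{\text{mix}}}\sum_{t=\tau_{\text{mix}}}^{T-1}\E\big[\langle F(x^{t+1/2})-F(x^{t+1/2},Z_t),\,x^{t+1/2}-u\rangle\big]
\end{equation*}
uniformly in $u\in\cX$ by $O(\gamma^2\tau_{\text{mix}}^2\sigma^2)$; the $\max_u$ is harmless because the dependence on $u$ is linear and the diameter supplies $\|x-u\|\le\sqrt{2}D$ whenever needed.

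The delicate issue is that $x^{t+1/2}$ is itself a function of $Z_t$, so no naive conditional expectation is zero. I would decouple by passing to the $\cF_t$-measurable proxy $x^t$ via
\begin{align*}
&\langle F(x^{t+1/2})-F(x^{t+1/2},Z_t),x^{t+1/2}-u\rangle=\langle F(x^t)-F(x^t,Z_t),x^t-u\rangle\\
&\quad+\langle [F(x^{t+1/2})-F(x^t)]-[F(x^{t+1/2},Z_t)-F(x^t,Z_t)],\,x^{t+1/2}-u\rangle\\
&\quad+\langle F(x^t)-F(x^t,Z_t),\,x^{t+1/2}-x^t\rangle.
\end{align*}
Assumption \ref{as:lipvar_Z} bounds the operator-differences in the second line pointwise by $\tilde L\|x^{t+1/2}-x^t\|$, so together with the diameter and Young's inequality that line is absorbed into the Bregman reserves and into the final $\gamma\tau_{\text{mix}}^2\sigma^2$ budget. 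The third line is handled the same way, using Assumption \ref{as:varvar_pi} via the stationarity $Z_t\sim\pi$ (marginally) to bound $\E\|F(x^t)-F(x^t,Z_t)\|_*^2$.

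The remaining ``clean'' term $\E[\langle F(x^t)-F(x^t,Z_t),x^t-u\rangle]$ is where the step-back technique exhibited in the sketch preceding Algorithm \ref{alg:MDG} is invoked. Writing
\begin{align*}
&\E\big[\langle F(x^t)-F(x^t,Z_t),x^t-u\rangle\big]=\E\big[\langle F(x^t)-F(x^t,Z_t),x^{t-\tau_{\text{mix}}}-u\rangle\big]\\
&\quad+\sum_{s=0}^{\tau_{\text{mix}}-1}\E\big[\langle F(x^t)-F(x^t,Z_t),x^{t-s}-x^{t-s-1}\rangle\big],
\end{align*}
the first summand is controlled by Assumption \ref{as:noise}: conditioning on $\cF_{t-\tau_{\text{mix}}}$ renders $x^{t-\tau_{\text{mix}}}-u$ fixed, while $Z_t$ lies within constant total variation of $\pi$, under which the integrand has zero mean by Assumption \ref{as:varvar_pi}. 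Each of the $\tau_{\text{mix}}$ increment terms is controlled by Young's inequality and the prox-step estimate $\|x^{t-s}-x^{t-s-1}\|\lesssim\gamma\|F(x^{t-s-1},Z_{t-s-1})\|_*$, again using Assumption \ref{as:varvar_pi}, contributing $O(\gamma\sigma^2)$ and summing to $O(\gamma\tau_{\text{mix}}^2\sigma^2)$. The main obstacle throughout is precisely this: Assumption \ref{as:varvar_pi} provides variance only in $\pi$-expectation, so every appearance of the noise must be brought under a near-stationary conditional expectation, and pointwise Lipschitzness (Assumption \ref{as:lipvar_Z}) is the essential tool that makes the realization-by-realization swaps needed for this arrangement legitimate. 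Collecting constants and applying $\gamma\le 1/(2\tilde L)$ produces the stated $\tfrac{2D^2}{\gamma(T-\tau_{\text{mix}})}+12\gamma\tau_{\text{mix}}^2\sigma^2$.
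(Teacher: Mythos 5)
Your skeleton (Mirror-Prox one-step inequality, monotonicity, step-back over $\tau_{\text{mix}}$ steps) is the right family of argument, but you miss the one idea that makes the theorem provable under Assumption \ref{as:varvar_pi}, and your substitute for it does not close. The paper applies monotonicity of each \emph{realization} $F(\cdot,Z_t)$ (Assumption \ref{as:monotone_Z}) directly to the noisy term:
\begin{equation*}
\langle F(x^{t+1/2},Z_t),\,x^{t+1/2}-u\rangle \;\geq\; \langle F(u,Z_t),\,x^{t+1/2}-u\rangle
=\langle F(u),x^{t+1/2}-u\rangle+\langle F(u,Z_t)-F(u),\,x^{t+1/2}-u\rangle ,
\end{equation*}
so that the residual noise $F(u,Z_t)-F(u)$ is evaluated at the \emph{fixed, deterministic} test point $u$. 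This is precisely why the theorem is stated for $\max_{u}\E[\cdot]$ rather than $\E[\mathrm{Err}_{\mathrm{VI}}]$, and it is the only place where the $\pi$-expectation variance bound suffices: after stepping back $\tau_{\text{mix}}$ iterations, $\E_{t-\tau}[F(u,Z_t)-F(u)]$ is small because the conditional law of $Z_t$ is within $(1/2)^{\tau/\tau_{\text{mix}}}$ of $\pi$ in total variation and $\E_\pi\|F(u,Z)-F(u)\|_*\leq\sigma$ by Jensen, with no dependence on the random trajectory inside the operator. You instead add and subtract the deterministic $F(x^{t+1/2})$ and keep the noise anchored at the random iterate.

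That choice creates two concrete failures. First, your decoupling from $x^{t+1/2}$ to $x^t$ produces the correction $\langle [F(x^{t+1/2})-F(x^t)]-[F(x^{t+1/2},Z_t)-F(x^t,Z_t)],\,x^{t+1/2}-u\rangle$, bounded only by $2\tilde L\|x^{t+1/2}-x^t\|\cdot\sqrt{2}D$; absorbing the quadratic part into the Bregman reserve $\tfrac{1}{4\gamma}\|x^{t+1/2}-x^t\|^2$ via Young leaves a leftover of order $\gamma\tilde L^2D^2$ per iteration, which is a deterministic additive term (of order $\tilde L D^2$ at $\gamma=1/(2\tilde L)$) that is not present in, and cannot be hidden inside, $12\gamma\tau_{\text{mix}}^2\sigma^2$. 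Second, even your ``clean'' term and your third line still carry noise of the form $F(x^t)-F(x^t,Z_t)$ with $x^t$ random and correlated with $Z_t$; Assumption \ref{as:varvar_pi} bounds $\E_{\pi}\|F(x,Z)-F(x)\|_*^2$ only for a fixed $x$, and stationarity of the marginal of $Z_t$ does not let you integrate $Z_t$ against $\pi$ while freezing $x^t$. Without the uniform bound of Assumption \ref{as:varvar_uniform} (which is deliberately not assumed here) these quantities are not controlled. Replacing your ``add and subtract $F(x^{t+1/2})$'' step by the realization-wise monotonicity inequality above removes both problems at once and reduces the rest of your argument to the paper's.
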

\begin{corollary}[Parameters tuning for Theorem \ref{theorem:MPGwb}]
    \label{cor:th3}
    Under the conditions of Theorem \ref{theorem:MPGwb}, choosing $\gamma$ as $$\gamma \leq \min\left\{ \frac{1}{2L} ~;~ \frac{D}{(T - \tau_{\text{mix}})^{1/2} \sigma \tau_{\text{mix}}} \right\},$$ in order to achieve the $\varepsilon$-approximate solution (in terms of $\underset{u \in \cX}{\max} \left\{ \E[\langle F(u), \widehat{x}^{T} - u\rangle] \right\} \leq \varepsilon$) it takes \begin{equation*}
        T = \Tilde{\mathcal{O}}\Bigg(\max\left\{\frac{L D^2}{\varepsilon} ~;~ \frac{\tau_{\text{mix}}^2 D^2 \sigma^2}{\varepsilon^2} \right\}\Bigg) \text{ iterations (oracle calls) of Algorithm \ref{alg:MPGwb}}.
    \end{equation*}
\end{corollary}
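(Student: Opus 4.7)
The plan is a direct parameter-tuning argument on the bound in Theorem \ref{theorem:MPGwb}. I would observe that the right-hand side has the canonical form $A/\gamma + \gamma B$ with $A := 2D^2/(T - \tau_{\text{mix}})$ and $B := 12 \tau_{\text{mix}}^2 \sigma^2$, subject to the admissibility constraint $\gamma \leq 1/(2L)$. Setting $\gamma_1 := 1/(2L)$ and $\gamma_2 := D/((T - \tau_{\text{mix}})^{1/2} \sigma \tau_{\text{mix}})$, I would take $\gamma := \min\{\gamma_1, \gamma_2\}$ (i.e., the largest value compatible with the corollary's prescription), which simultaneously handles the deterministic-dominated and stochastic-dominated regimes.

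Next, I would apply the elementary inequalities $1/\min\{\gamma_1, \gamma_2\} \leq 1/\gamma_1 + 1/\gamma_2$ and $\min\{\gamma_1, \gamma_2\} \leq \gamma_2$ to decouple the bound, and after cancelling $D$'s and $\sigma$'s in $A/\gamma_2$ and $\gamma_2 B$, obtain
\begin{equation*}
    \frac{A}{\min\{\gamma_1, \gamma_2\}} + \min\{\gamma_1, \gamma_2\} \cdot B \;\leq\; \frac{4 L D^2}{T - \tau_{\text{mix}}} + \frac{14 D \sigma \tau_{\text{mix}}}{(T - \tau_{\text{mix}})^{1/2}}.
\end{equation*}
Requiring each summand to be at most $\varepsilon/2$ then forces $T - \tau_{\text{mix}} = \Omega(L D^2 / \varepsilon)$ from the deterministic term and $T - \tau_{\text{mix}} = \Omega(\tau_{\text{mix}}^2 D^2 \sigma^2 / \varepsilon^2)$ from the stochastic term, and the stated iteration complexity follows by combining the two lower bounds.

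The parenthetical "(oracle calls)" is immediate: every iteration of Algorithm \ref{alg:MPGwb} evaluates $F(\cdot, Z_t)$ exactly twice, once to form $x^{t+1/2}$ and once to form $x^{t+1}$, both at the same Markov sample $Z_t$, so oracle complexity matches iteration complexity up to a factor of two that is absorbed by $\widetilde{\cO}(\cdot)$. There is no genuine obstacle beyond bookkeeping: the only subtle point is that the theorem delivers a guarantee on $T - \tau_{\text{mix}}$ rather than on $T$ itself, but for $T \geq 2\tau_{\text{mix}}$ one has $T - \tau_{\text{mix}} \geq T/2$, which converts the derived lower bounds into bounds on $T$ and explains the appearance of the tilde in $\widetilde{\cO}$, used to hide the additive $\tau_{\text{mix}}$ and the universal constants.
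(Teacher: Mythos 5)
Your proposal is correct and follows essentially the same route as the paper's proof: substitute the prescribed $\gamma$ into the bound of Theorem \ref{theorem:MPGwb}, split via $1/\min\{\gamma_1,\gamma_2\} \leq 1/\gamma_1 + 1/\gamma_2$ to obtain a bound of the form $\cO\left(L D^2/(T-\tau_{\text{mix}}) + D\sigma\tau_{\text{mix}}/\sqrt{T-\tau_{\text{mix}}}\right)$, and invert in $T$. The additional bookkeeping you supply (explicit constants, the two-oracle-calls-per-iteration remark, and the $T - \tau_{\text{mix}} \geq T/2$ conversion) only makes explicit what the paper leaves implicit.
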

The full proofs of Theorem \ref{theorem:MPG} and Corollary \ref{cor:th3} are provided in Appendix \ref{appendix:th3}.
To proceed to the next algorithm, we first introduce a more powerful convergence criterion.  
Now, drawing on \cite{Nesterov2003DualEA, Nesterov2005PrimaldualSM} and \cite{juditsky2011solving}, if $F$ is monotone, the quality of a candidate solution $x \in \cX$ can be assessed via the \textit{error (or merit)} function
\begin{equation}
    \label{def:err_vi}
    \text{Err}_{\text{VI}}(x) := \underset{u\in \cX}{\max}\langle F(u), x - u\rangle.
\end{equation}
The rationale behind this definition is that, if $x^*$
solves \eqref{eq:problem2}, monotonicity gives $\langle F(u), x^* - u \rangle \leq \langle F(x^*), x^* - u\rangle \leq 0$, therefore the quantity being maximized
in \eqref{def:err_vi} is small if $x$ is an approximate solution of \eqref{eq:problem2}. It has also been shown in \cite{Nesterov2003DualEA, antonakopoulos2019adaptive} that $\text{Err}_{\text{VI}}(x) = 0$ if and only if $x$ is a solution of the problem \eqref{eq:problem2}. Since the specificity of working with random variables implies stochastic convergence, we further use $\E[\text{Err}_{\text{VI}}(x)]$. Note also that the convergence criterion used in Theorem \ref{theorem:MPGwb} satisfies the inequality $\max_{u \in \cX} \left\{ \E[\langle F(u), x - u\rangle] \right\} \leq \E[\text{Err}_{\text{VI}}(x)] \text{ for all } x\in\cX$. The use of $\max\{\expect{\cdot}\}$ allows us to bound the gradient variance in Theorem \ref{theorem:MPGwb} only in the mathematical expectation -- similar to the i.i.d. case, which, to the best of our knowledge, was never done before in the Markovian noise setup. On the other hand, the uniformly bounded noise (Assumption \ref{as:lipvar_Z}) enables to utilize a more stringent convergence criterion and less stringent Assumptions \ref{as:lipvar} and \ref{as:monotone}. Let us now introduce a modified version of Algorithm \ref{alg:MPGwb}, that utilizes the batching technique for Markovian noise (Algorithm \ref{alg:MPG}). 

\begin{algorithm}[h!]
   \caption{\texttt{Markovian Mirror-Prox (MMP)} with batching}
   \label{alg:MPG}
\begin{algorithmic}[1]
    \State {\bf Parameters:} stepsize $\gamma>0$, number of iterations $T$
    \State {\bf Initialization:} choose  $x^0 \in \cX$
    \For{$t = 0, 1, 2, \dots, T$}
        \State $g^{t+1/2} = B^{-1}\sum\limits_{i=1}^{B}F(x^t, Z_{N_t+i})$
        \State $x^{t+1/2} = P_{x^t} \left( \gamma g^{t+1/2} \right)$
        %\underset{u\in Z}{\argmin}\{V(x^t, u) +\gamma\langle g^{t+\frac{1}{2}}, u\rangle\}$
        \State $N_{t+1/2} = N_t + B$
        \State Sample $\textstyle{J_t \sim \text{Geom}\left(1/2\right)}$
        \State
        \text{\small{ 
        $g^{t} = g^{t}_0 +
            \begin{cases}
            \textstyle{2^{J_t}\hspace{-0.1cm}\left( g^{t}_{J_t}  - g^{t}_{J_t - 1} \right)}, &\hspace{-0.25cm} \text{if } 2^{J_t} \leq M \\
            0, & \hspace{-0.25cm}\text{otherwise}
            \end{cases}
        $~~with
        $
        \textstyle{g^t_j = 2^{-j}B^{-1} \sum\limits_{i=1}^{2^jB} F
        (x^{t+1/2}, Z_{N_{t+1/2} + i})}
        $
        }}  
        \State $x^{t+1} = P_{x^t} \left( \gamma g^t \right)$
        %\underset{u\in Z}{\argmin}\{V(x^t, u) +\gamma\langle g^{t}, u\rangle\}$
        \State $\textstyle{N_{t+1} = N_{t+1/2} + 2^{J_{t}}B}$ 
    \EndFor
\end{algorithmic}
\end{algorithm}

Now we explore the convergence rate for Algorithm \ref{alg:MPG}.
\begin{theorem}[Convergence of MMP]
    \label{theorem:MPG}
    Let Assumptions \ref{as:noise}, \ref{as:lipvar}, \ref{as:monotone}, \ref{as:varvar_uniform} be satisfied with $\| \cdot \| = \| \cdot \|_p$, $1 \leq p \leq 2$. Let the problem \eqref{eq:problem2} be solved by Algorithm \ref{alg:MPG}. Assume that the stepsize $\gamma$ is chosen such that $\gamma \leq 1/(2L)$. Then, for all $T \geq 0$ it holds that 
    \begin{equation*}
            \expect{\text{{\normalfont Err}}_{\text{{\normalfont VI}}}(\widehat{x}^{T})} 
            = 
            \widetilde{\cO} \left( 
                \frac{D^2}{\gamma T}
                +
                \gamma \tau_{\text{mix}} B^{-1} \left( TM^{-1} + \log(M) \right) \sigma^2
            \right).
        \end{equation*}
\end{theorem}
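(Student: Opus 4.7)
The plan is to combine the standard Mirror-Prox descent inequality with the batched gradient controls from Lemma \ref{lem:expect_bound_grad} (for the main step $g^t$) and Lemma \ref{lem:xuivjopeentertainment} (for the extrapolation estimator $g^{t+1/2}$), while treating the $\max_u$ in $\text{Err}_{\text{VI}}$ by introducing a deterministic auxiliary sequence. I will work with $\widehat{x}^T = T^{-1}\sum_{t=0}^{T-1} x^{t+1/2}$.

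First I would apply the three-point (prox) lemma to both updates $x^{t+1/2}=P_{x^t}(\gamma g^{t+1/2})$ with comparison point $x^{t+1}$, and $x^{t+1}=P_{x^t}(\gamma g^t)$ with comparison point $u$. Adding them cancels $V(x^t,x^{t+1})$ and yields
\begin{equation*}
\gamma\langle g^t,x^{t+1/2}-u\rangle\leq V(x^t,u)-V(x^{t+1},u)-V(x^t,x^{t+1/2})-V(x^{t+1/2},x^{t+1})+\gamma\langle g^t-g^{t+1/2},x^{t+1/2}-x^{t+1}\rangle.
\end{equation*}
Young's inequality with a scale chosen so the $\|x^{t+1/2}-x^{t+1}\|^2$ term is dominated by $V(x^{t+1/2},x^{t+1})$ (using $1$-strong convexity), plus the triangle decomposition $\|g^t-g^{t+1/2}\|_*^2\lesssim \|g^t-F(x^{t+1/2})\|_*^2+L^2\|x^{t+1/2}-x^t\|^2+\|F(x^t)-g^{t+1/2}\|_*^2$, lets me absorb the Lipschitz piece into $-V(x^t,x^{t+1/2})$ thanks to $\gamma\lesssim 1/L$. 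I am then left with
\begin{equation*}
\gamma\langle g^t,x^{t+1/2}-u\rangle\leq V(x^t,u)-V(x^{t+1},u)+C\gamma^2\bigl(\|g^t-F(x^{t+1/2})\|_*^2+\|F(x^t)-g^{t+1/2}\|_*^2\bigr).
\end{equation*}

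Next I rewrite $\gamma\langle g^t,x^{t+1/2}-u\rangle=\gamma\langle F(x^{t+1/2}),x^{t+1/2}-u\rangle+\gamma\langle g^t-F(x^{t+1/2}),x^{t+1/2}-u\rangle$, invoke Assumption \ref{as:monotone} to pass from $\langle F(x^{t+1/2}),x^{t+1/2}-u\rangle$ to $\langle F(u),x^{t+1/2}-u\rangle$, use convexity in $x^{t+1/2}$ to bring $\widehat{x}^T$ out of the sum, telescope $V(x^t,u)-V(x^{t+1},u)$, and divide by $\gamma T$. The main difficulty is now that in $\text{Err}_{\text{VI}}(\widehat{x}^T)=\max_u\langle F(u),\widehat{x}^T-u\rangle$ the maximizer $u$ depends on the random trajectory, so the noise inner product $\langle F(x^{t+1/2})-g^t,x^{t+1/2}-u\rangle$ cannot be dispatched by simply taking conditional expectation. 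I split it as $\langle F(x^{t+1/2})-\E_{t+1/2}[g^t],\cdot\rangle+\langle \E_{t+1/2}[g^t]-g^t,\cdot\rangle$. The first (bias) piece uses $\|x^{t+1/2}-u\|\leq 2D$ together with the bound $\|F(x^{t+1/2})-\E_{t+1/2}[g^t]\|_*^2\lesssim B^{-1}\tau_{\text{mix}}M^{-1}\sigma^2$ from Lemma \ref{lem:expect_bound_grad}, and an AM–GM split gives contributions of size $D^2/(\gamma T)$ and $\gamma TB^{-1}\tau_{\text{mix}}M^{-1}\sigma^2$. For the second (martingale) piece I introduce a ghost sequence $z^0=x^0$, $z^{t+1}=P_{z^t}\bigl(-\gamma(g^t-\E_{t+1/2}[g^t])\bigr)$, and apply the three-point lemma to it, obtaining $\gamma\langle\E_{t+1/2}[g^t]-g^t,z^t-u\rangle\leq V(z^t,u)-V(z^{t+1},u)+\tfrac{\gamma^2}{2}\|g^t-\E_{t+1/2}[g^t]\|_*^2$; the residual $\langle\E_{t+1/2}[g^t]-g^t,x^{t+1/2}-z^t\rangle$ then has zero conditional mean because $x^{t+1/2}$ and $z^t$ are $\mathcal{F}_{t+1/2}$-measurable.

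Finally I take total expectation, telescope $V(z^t,u)-V(z^{t+1},u)\leq V(z^0,u)\leq D^2$, and invoke Lemma \ref{lem:expect_bound_grad} for $\E\|g^t-F(x^{t+1/2})\|_*^2\lesssim B^{-1}\tau_{\text{mix}}\log(M)\sigma^2$ together with Lemma \ref{lem:xuivjopeentertainment} applied to the centred increments $F(x^t,Z_{N_t+i})-F(x^t)$ (justified by conditioning on $\mathcal{F}_t$ and the strong Markov property) to get $\E\|g^{t+1/2}-F(x^t)\|_*^2\lesssim B^{-1}\tau_{\text{mix}}\sigma^2$. Collecting everything, the noise contributions sum to $\gamma\tau_{\text{mix}}B^{-1}(TM^{-1}+\log M)\sigma^2$ after dividing by $\gamma T$, yielding the claimed rate.

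The main obstacle I expect is the interaction between the Markovian bias of the MLMC estimator $g^t$ and the $\max_u$ merit function: the ghost-sequence trick from stochastic Mirror-Prox is classical in the i.i.d.\ case, but here it has to be combined carefully with the bias-variance decomposition of Lemma \ref{lem:expect_bound_grad} so that the bias produces exactly the $TM^{-1}$ term while the variance produces the $\log M$ term, and no cross term blows up. A secondary technical point is verifying that the Markovian deviation bound in Lemma \ref{lem:xuivjopeentertainment} is applicable at a random iterate $x^t$; this follows from conditioning on $\mathcal{F}_t$ and the fact that the chain $\{Z_k\}$ is stationary and uniformly geometrically ergodic (Assumption \ref{as:noise}).
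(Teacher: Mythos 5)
Your proposal is correct and follows essentially the same route as the paper: the paper simply invokes Theorem~2 of \cite{juditsky2011solving} (restated as Lemma~\ref{lemma:VI}, whose auxiliary sequence $y^t = P_{y^{t-1}}(\delta^t)$ plays exactly the role of your ghost sequence $z^t$, and whose measurability of $x^{t+1/2}-y^{t-1}$ is what isolates the bias of $g^t$), then bounds $\epsilon_t^{1/2}$, $\epsilon_t^1$ and the bias term via Lemmas~\ref{lem:xuivjopeentertainment} and~\ref{lem:expect_bound_grad} and applies Fenchel--Young with $\kappa=\gamma T$ to turn the bias into the $TM^{-1}$ term, exactly as in your AM--GM step. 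The only difference is that you re-derive that cited machinery from the prox lemmas and split $\delta^t$ into bias plus martingale before introducing the auxiliary sequence, which is an equivalent reorganization.
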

\begin{corollary}[Parameters tuning for Theorem \ref{theorem:MPG}]
    \label{cor:th4}
    Under the conditions of Theorem \ref{theorem:MPG}, choosing  $\gamma$, $M$ and $B$ as 
    \begin{equation*}
        \gamma := \min\left\{ \frac{1}{2L} ~;~ \frac{D}{T^{1/2} \sigma \tau_{\text{mix}}^{1/2} }\right\}
        ,~
        M :=  T 
        ~\text{ and }~
        B := 1,
    \end{equation*}
    in order to achieve the $\varepsilon$-approximate solution (in terms of $\expect{\text{{\normalfont Err}}_{\text{{\normalfont VI}}}(\widehat{x}^{T})} \leq \varepsilon$) it takes \begin{equation*}
        T = \widetilde{\mathcal{O}}\left( \max\left\{\frac{L D^2}{\varepsilon} ~;~ \frac{\tau_{\text{mix}} D^2 \sigma^2}{\varepsilon^2} \right\} \right) \text{ iterations (oracle calls) of Algorithm \ref{alg:MPG}}.
    \end{equation*}
\end{corollary}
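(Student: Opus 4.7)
The plan is to take the convergence bound from Theorem \ref{theorem:MPG} as a black box and determine the iteration count $T$ needed to drive it below $\varepsilon$ under the stated parameter choices. First I would substitute $B=1$ and $M=T$ directly into the bound
\begin{equation*}
\expect{\text{Err}_{\text{VI}}(\widehat{x}^{T})} = \widetilde{\cO}\!\left(\frac{D^2}{\gamma T} + \gamma \tau_{\text{mix}} B^{-1}\!\left(TM^{-1} + \log M\right)\sigma^2\right),
\end{equation*}
which collapses the bracketed factor to $1+\log T = \widetilde{\cO}(1)$ and therefore reduces the right-hand side to $\widetilde{\cO}\!\left(D^2/(\gamma T) + \gamma \tau_{\text{mix}} \sigma^2\right)$. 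This is the only structural simplification needed, and it is justified because the $\widetilde{\cO}$ notation absorbs the logarithmic factor $\log T$ by definition.

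Next I would analyze the two cases induced by the $\min$ in the definition of $\gamma$. In the regime where $\gamma = 1/(2L)$, which corresponds to $T \lesssim L^2 D^2/(\sigma^2 \tau_{\text{mix}})$, the bound becomes $\widetilde{\cO}(LD^2/T + \tau_{\text{mix}}\sigma^2/L)$; the first term forces $T = \widetilde{\Omega}(LD^2/\varepsilon)$ while the second is automatically small in this regime. In the complementary regime $\gamma = D/(T^{1/2}\sigma\tau_{\text{mix}}^{1/2})$, both terms $D^2/(\gamma T)$ and $\gamma \tau_{\text{mix}}\sigma^2$ balance to the common value $D\sigma\tau_{\text{mix}}^{1/2}/T^{1/2}$, which gives $\leq \varepsilon$ iff $T = \widetilde{\Omega}(\tau_{\text{mix}}D^2\sigma^2/\varepsilon^2)$. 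Taking the maximum of the two thresholds yields the claimed iteration complexity.

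Finally I would justify the identification of iteration complexity with oracle complexity: each outer loop of Algorithm \ref{alg:MPG} performs $B + 2^{J_t} B$ oracle calls with $J_t \sim \text{Geom}(1/2)$ truncated at $M$, so the expected number of oracle calls per iteration is $\cO(B \log M) = \cO(\log T) = \widetilde{\cO}(1)$ under the chosen $B=1$, $M=T$. Hence both counts agree inside $\widetilde{\cO}(\cdot)$, completing the derivation.

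I do not expect any step to present a real obstacle: the argument is a direct substitution followed by a case split and a balancing calculation. The only mildly delicate point is to confirm that the stepsize constraint $\gamma \leq 1/(2L)$ from Theorem \ref{theorem:MPG} is preserved by the chosen $\gamma$, which is immediate since $\gamma$ is defined as a $\min$ with $1/(2L)$.
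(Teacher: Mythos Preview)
Your proposal is correct and follows essentially the same route as the paper: substitute $B=1$, $M=T$ into the bound of Theorem~\ref{theorem:MPG}, absorb $1+\log T$ into $\widetilde{\cO}$, then plug in the prescribed $\gamma$ and read off $T$. The paper's own proof is actually terser than yours---it jumps directly to the final $\widetilde{\cO}(LD^2/T + \tau_{\text{mix}}^{1/2}D\sigma/\sqrt{T})$ without spelling out the two cases for the $\min$, and it relegates the ``iterations $=$ oracle calls'' justification to the surrounding Discussion rather than the proof; your explicit case split and oracle-count remark are welcome additions rather than deviations.
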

The full proofs of Theorem \ref{theorem:MPG} and Corollary \ref{cor:th4} are provided in Appendix \ref{appendix:th4}. 
\vspace{0.15cm}

\textbf{Discussion. }In Corollary \ref{cor:th4}, we again obtain the result in terms of oracle complexity, since the usage $g^t$ requires $\cO(B \log(M))$ of oracle calls at each iteration.

The results of Theorem \ref{theorem:MPG} and Corollary \ref{cor:th4} match with the results from \cite{juditsky2011solving} in the i.i.d. ($\tau_{\text{mix}} = 1$) case:  $T = \cO \left( \max\left\{ (L  D^2) / \varepsilon ~;~ D^2 \sigma^2 / \varepsilon^2 \right\} \right)$. The usage of the Markovian batching helps us to achieve better performance compared to Algorithm \ref{alg:MPGwb} (see Corollary \ref{cor:th3}). Firstly, we obtain theoretical guarantees for a more strict criterion of the form $\expect{\text{{\normalfont Err}}_{\text{{\normalfont VI}}}(\widehat{x}^{T})}$. Secondly, the degree of dependence on $\tau_{\text{mix}}$ in the stochastic term decreases from the quadratic to a linear one.

Let us now compare the convergence rate for Theorem \ref{theorem:MPG} with the previous works about the Markovian noise in the variational inequality problem \eqref{eq:problem2}. We again will compare with existing methods on oracle complexity criterion.

\textbf{Comparison.} To the best of our knowledge, there exist only two works on the topic of VI with Markovian stochasticity. In the first one \cite{wang2022stability}, the authors consider only saddle point problems and provide the result of the form $T = \widetilde{\cO}\left( (G^2 + \tau_{\text{mix}}^2 G^4) / \varepsilon^2 \right)$, where $G$ is the uniform bound of the stochastic operator. This estimate is much worse than the one we observe in Corollary \ref{cor:th4}, since the deterministic term contains a $\varepsilon^{-2}$ dependence, and the stochastic term not only contains $G^2$ rather than $\sigma^2$, but also is multiplied by $\tau_{\text{mix}}^2$. The second work \cite{beznosikov2024first} provided the guarantee of the form $T = \widetilde{\cO}\left(\tau_{\text{mix}} \max\left\{L D^2 / \varepsilon ~;~ D^2 \sigma^2 / \varepsilon^2 \right\}\right)$. This result is almost the same as in Corollary \ref{cor:th4}, but both terms are multiplied by $\tau_{\text{mix}}$, because the authors again used the batch of size $\widetilde{\cO}(\tau_{\text{mix}})$, instead of $\widetilde{\cO}(1)$. 

\textbf{Lower bound.} Combining two lower bounds from \cite{ouyang2021lower} (for the deterministic term) and from \cite{duchi2012ergodic} (for the stochastic term) we provide the following result.
\begin{proposition}[Lower bound for \eqref{eq:problem2}]
\label{proposition:2}
    There exists an instance of the optimization problem \eqref{eq:problem2} satisfying Assumptions \ref{as:noise}, \ref{as:lipvar}, \ref{as:monotone}, \ref{as:varvar_uniform} with arbitrary $L > 0, \sigma^2 \geq 0, \tau_{\text{mix}} \in \N$, such that for any stochastic first-order gradient method it takes at least
    \begin{equation*}
        T = \Omega \left( \max\left\{ \frac{L D^2}{\varepsilon} ~;~ \frac{\tau_{\text{mix}} D^2 \sigma^2}{\varepsilon^2}\right\} \right)
    \end{equation*}
    oracle calls in order to achieve $\expect{\text{{\normalfont Err}}_{\text{{\normalfont VI}}}(x^{T})} \leq \varepsilon$.
\end{proposition}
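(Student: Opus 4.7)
The plan is to produce a single hard instance of \eqref{eq:problem2} that simultaneously forces both terms in the claimed lower bound, and then argue that any stochastic first-order method requires $\Omega(\max\{LD^2/\varepsilon, \tau_{\text{mix}} D^2 \sigma^2/\varepsilon^2\})$ oracle queries on it. The construction is the standard ``direct sum of two independent hard instances'' argument: one instance is responsible for the deterministic $LD^2/\varepsilon$ term, the other for the stochastic $\tau_{\text{mix}} D^2 \sigma^2/\varepsilon^2$ term. Since a first-order method run on the product problem must drive the error on each block below $\varepsilon$, the total complexity is the maximum of the individual lower bounds.

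For the deterministic piece, I would invoke the lower bound of \cite{ouyang2021lower}: there exists a monotone $L$-Lipschitz operator $F_1$ on a convex compact set $\cX_1$ of Bregman radius $D_1 = \Theta(D)$ (they work out a bilinear saddle-point instance, whose associated VI operator is monotone and Lipschitz), such that any first-order method needs $\Omega(LD^2/\varepsilon)$ gradient calls to guarantee $\text{Err}_{\text{VI}}(x^T) \leq \varepsilon$. This instance is noiseless, so we equip it with a trivial Markov chain and zero noise, and it satisfies Assumptions~\ref{as:noise}, \ref{as:lipvar}, \ref{as:monotone}, \ref{as:varvar_uniform} with the chosen $L$, any $\tau_{\text{mix}}\ge 1$, and $\sigma = 0$.

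For the stochastic piece, I would lift the Markovian lower bound of \cite{duchi2012ergodic} (established for stochastic convex minimization under geometrically ergodic noise) to the VI setting. Because convex minimization of an $L$-smooth convex $f$ is a special case of monotone Lipschitz VI with $F = \nabla f$, and because $\text{Err}_{\text{VI}}$ upper-bounds $f(x) - f^*$, the same hard instance gives a monotone, $L$-Lipschitz operator $F_2$ on a set $\cX_2$ of Bregman radius $\Theta(D)$, together with a Markov chain $\{Z_t\}$ of mixing time $\tau_{\text{mix}}$ and a noisy oracle $F_2(\cdot, Z)$ satisfying Assumption~\ref{as:varvar_uniform} with variance $\sigma^2$, for which $\Omega(\tau_{\text{mix}} D^2 \sigma^2/\varepsilon^2)$ oracle calls are necessary. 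The mixing-time factor comes from the standard ``lazy chain'' construction that only refreshes the underlying i.i.d.\ noise every $\Theta(\tau_{\text{mix}})$ steps, so that $\tau_{\text{mix}}$ consecutive oracle calls produce $\Theta(1)$ effective samples.

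Finally, I would take $\cX := \cX_1 \times \cX_2$, $F(x_1,x_2) := (F_1(x_1), F_2(x_2))$, with Bregman divergence and norm defined blockwise; the Lipschitz constant, monotonicity, and uniform variance bound transfer blockwise, and the overall Bregman diameter is $\Theta(D)$. Any first-order method on this product, projected onto each block, yields a first-order method on $F_1$ and on $F_2$ with no more queries; since $\text{Err}_{\text{VI}}(\widehat{x}^T) \geq \max\{\text{Err}_{\text{VI}}^{(1)}(\widehat{x}_1^T), \text{Err}_{\text{VI}}^{(2)}(\widehat{x}_2^T)\}$, achieving $\varepsilon$ on the product forces $\varepsilon$ on each block, yielding the claimed $\max$ bound. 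The main obstacle I anticipate is verifying that the Markovian lower bound from \cite{duchi2012ergodic}, originally phrased for nonsmooth convex minimization, can be realized by a smooth (hence $L$-Lipschitz-gradient) construction of comparable hardness; this is achieved by the usual smoothing of the hard quadratic/piecewise-linear minimax instance so that $L$ and $\sigma^2$ appear as independent parameters and match the normalization of the deterministic block.
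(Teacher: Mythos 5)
Your proposal is correct and follows essentially the same route as the paper: a direct-sum (block-separable) combination of the deterministic saddle-point hard instance from \cite{ouyang2021lower} with the Markovian stochastic hard instance from \cite{duchi2012ergodic}, so that any first-order method must pay the maximum of the two individual lower bounds. In fact, your writeup is considerably more detailed than the paper's two-sentence sketch — in particular your verification that the merit function decouples over the product and dominates each block's error, and your remark about lifting the minimization lower bound to the VI criterion via $F=\nabla f$, are steps the paper leaves implicit.
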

The full proof of Proposition \ref{proposition:2} is provided in Appendix \ref{appendix:lover_bounds}. As follows from Proposition \ref{proposition:2} and Corollary \ref{cor:th4}, Algorithm \ref{alg:MPG} is optimal in the class of convex VI problems \eqref{eq:problem2} with the Markovian noise.

\newpage

\bibliography{main}  

\begin{thebibliography}{10}

\bibitem{agarwal2011stochastic}
Alekh Agarwal, Dean~P Foster, Daniel~J Hsu, Sham~M Kakade, and Alexander Rakhlin.
\newblock Stochastic convex optimization with bandit feedback.
\newblock {\em Advances in Neural Information Processing Systems}, 24, 2011.

\bibitem{akhavan2020exploiting}
Arya Akhavan, Massimiliano Pontil, and Alexandre Tsybakov.
\newblock Exploiting higher order smoothness in derivative-free optimization and continuous bandits.
\newblock {\em Advances in Neural Information Processing Systems}, 33:9017--9027, 2020.

\bibitem{alacaoglu2022stochastic}
Ahmet Alacaoglu and Yura Malitsky.
\newblock Stochastic variance reduction for variational inequality methods.
\newblock In {\em Conference on Learning Theory}, pages 778--816. PMLR, 2022.

\bibitem{allen2014linear}
Zeyuan Allen-Zhu and Lorenzo Orecchia.
\newblock Linear coupling: An ultimate unification of gradient and mirror descent.
\newblock {\em arXiv preprint arXiv:1407.1537}, 2014.

\bibitem{antonakopoulos2019adaptive}
Kimon Antonakopoulos, Veronica Belmega, and Panayotis Mertikopoulos.
\newblock An adaptive mirror-prox method for variational inequalities with singular operators.
\newblock {\em Advances in Neural Information Processing Systems}, 32, 2019.

\bibitem{bach2012optimization}
Francis Bach, Rodolphe Jenatton, Julien Mairal, Guillaume Obozinski, et~al.
\newblock Optimization with sparsity-inducing penalties.
\newblock {\em Foundations and Trends{\textregistered} in Machine Learning}, 4(1):1--106, 2012.

\bibitem{bach2008convex}
Francis Bach, Julien Mairal, and Jean Ponce.
\newblock Convex sparse matrix factorizations.
\newblock {\em arXiv preprint arXiv:0812.1869}, 2008.

\bibitem{bach2016highly}
Francis Bach and Vianney Perchet.
\newblock Highly-smooth zero-th order online optimization.
\newblock In {\em Conference on Learning Theory}, pages 257--283. PMLR, 2016.

\bibitem{ben2001lectures}
Aharon Ben-Tal and Arkadi Nemirovski.
\newblock {\em Lectures on modern convex optimization: analysis, algorithms, and engineering applications}.
\newblock SIAM, 2001.

\bibitem{beznosikov2023stochastic}
Aleksandr Beznosikov, Eduard Gorbunov, Hugo Berard, and Nicolas Loizou.
\newblock Stochastic gradient descent-ascent: Unified theory and new efficient methods.
\newblock In {\em International Conference on Artificial Intelligence and Statistics}, pages 172--235. PMLR, 2023.

\bibitem{beznosikov2024first}
Aleksandr Beznosikov, Sergey Samsonov, Marina Sheshukova, Alexander Gasnikov, Alexey Naumov, and Eric Moulines.
\newblock First order methods with markovian noise: from acceleration to variational inequalities.
\newblock {\em Advances in Neural Information Processing Systems}, 36, 2024.

\bibitem{bhandari2018finite}
Jalaj Bhandari, Daniel Russo, and Raghav Singal.
\newblock A finite time analysis of temporal difference learning with linear function approximation.
\newblock In {\em Conference on learning theory}, pages 1691--1692. PMLR, 2018.

\bibitem{bresler2020squares}
Guy Bresler, Prateek Jain, Dheeraj Nagaraj, Praneeth Netrapalli, and Xian Wu.
\newblock Least squares regression with markovian data: Fundamental limits and algorithms, 2020.

\bibitem{chambolle2011first}
Antonin Chambolle and Thomas Pock.
\newblock A first-order primal-dual algorithm for convex problems with applications to imaging.
\newblock {\em Journal of mathematical imaging and vision}, 40:120--145, 2011.

\bibitem{chavdarova2019reducing}
Tatjana Chavdarova, Gauthier Gidel, Fran{\c{c}}ois Fleuret, and Simon Lacoste-Julien.
\newblock Reducing noise in gan training with variance reduced extragradient.
\newblock {\em Advances in Neural Information Processing Systems}, 32, 2019.

\bibitem{cornuejols2006optimization}
Gerard Cornuejols and Reha T{\"u}t{\"u}nc{\"u}.
\newblock {\em Optimization methods in finance}, volume~5.
\newblock Cambridge University Press, 2006.

\bibitem{creswell2018generative}
Antonia Creswell, Tom White, Vincent Dumoulin, Kai Arulkumaran, Biswa Sengupta, and Anil~A Bharath.
\newblock Generative adversarial networks: An overview.
\newblock {\em IEEE signal processing magazine}, 35(1):53--65, 2018.

\bibitem{dimakis2010gossip}
Alexandros~G Dimakis, Soummya Kar, Jos{\'e}~MF Moura, Michael~G Rabbat, and Anna Scaglione.
\newblock Gossip algorithms for distributed signal processing.
\newblock {\em Proceedings of the IEEE}, 98(11):1847--1864, 2010.

\bibitem{doan2022finite}
Thinh~T Doan.
\newblock Finite-time analysis of markov gradient descent.
\newblock {\em IEEE Transactions on Automatic Control}, 68(4):2140--2153, 2022.

\bibitem{doan2020convergence}
Thinh~T Doan, Lam~M Nguyen, Nhan~H Pham, and Justin Romberg.
\newblock Convergence rates of accelerated markov gradient descent with applications in reinforcement learning.
\newblock {\em arXiv preprint arXiv:2002.02873}, 2020.

\bibitem{doan2020finitetime}
Thinh~T. Doan, Lam~M. Nguyen, Nhan~H. Pham, and Justin Romberg.
\newblock Finite-time analysis of stochastic gradient descent under markov randomness, 2020.

\bibitem{dorfman2022adapting}
Ron Dorfman and Kfir~Yehuda Levy.
\newblock Adapting to mixing time in stochastic optimization with markovian data.
\newblock In {\em International Conference on Machine Learning}, pages 5429--5446. PMLR, 2022.

\bibitem{duchi2012ergodic}
John~C Duchi, Alekh Agarwal, Mikael Johansson, and Michael~I Jordan.
\newblock Ergodic mirror descent.
\newblock {\em SIAM Journal on Optimization}, 22(4):1549--1578, 2012.

\bibitem{duchi2010composite}
John~C Duchi, Shai Shalev-Shwartz, Yoram Singer, and Ambuj Tewari.
\newblock Composite objective mirror descent.
\newblock In {\em COLT}, volume~10, pages 14--26. Citeseer, 2010.

\bibitem{durmus2021stability}
Alain Durmus, Eric Moulines, Alexey Naumov, Sergey Samsonov, and Hoi-To Wai.
\newblock On the stability of random matrix product with markovian noise: Application to linear stochastic approximation and td learning.
\newblock In {\em Conference on Learning Theory}, pages 1711--1752. PMLR, 2021.

\bibitem{dvurechensky2021accelerated}
Pavel Dvurechensky, Eduard Gorbunov, and Alexander Gasnikov.
\newblock An accelerated directional derivative method for smooth stochastic convex optimization.
\newblock {\em European Journal of Operational Research}, 290(2):601--621, 2021.

\bibitem{even2023stochastic}
Mathieu Even.
\newblock Stochastic gradient descent under markovian sampling schemes.
\newblock In {\em International Conference on Machine Learning}, pages 9412--9439. PMLR, 2023.

\bibitem{facchinei2003finite}
Francisco Facchinei and Jong-Shi Pang.
\newblock {\em Finite-dimensional variational inequalities and complementarity problems}.
\newblock Springer, 2003.

\bibitem{gao2020randomized}
Tianxiang Gao, Songtao Lu, Jia Liu, and Chris Chu.
\newblock Randomized bregman coordinate descent methods for non-lipschitz optimization.
\newblock {\em arXiv preprint arXiv:2001.05202}, 2020.

\bibitem{gidel2018variational}
Gauthier Gidel, Hugo Berard, Ga{\"e}tan Vignoud, Pascal Vincent, and Simon Lacoste-Julien.
\newblock A variational inequality perspective on generative adversarial networks.
\newblock {\em arXiv preprint arXiv:1802.10551}, 2018.

\bibitem{goodfellow2016nips}
Ian Goodfellow.
\newblock Nips 2016 tutorial: Generative adversarial networks.
\newblock {\em arXiv preprint arXiv:1701.00160}, 2016.

\bibitem{gorbunov2020stochastic}
Eduard Gorbunov, Marina Danilova, and Alexander Gasnikov.
\newblock Stochastic optimization with heavy-tailed noise via accelerated gradient clipping.
\newblock {\em Advances in Neural Information Processing Systems}, 33:15042--15053, 2020.

\bibitem{gorbunov2022accelerated}
Eduard Gorbunov, Pavel Dvurechensky, and Alexander Gasnikov.
\newblock An accelerated method for derivative-free smooth stochastic convex optimization.
\newblock {\em SIAM Journal on Optimization}, 32(2):1210--1238, 2022.

\bibitem{hanzely2021fastest}
Filip Hanzely and Peter Richt{\'a}rik.
\newblock Fastest rates for stochastic mirror descent methods.
\newblock {\em Computational Optimization and Applications}, 79:717--766, 2021.

\bibitem{hashem2024adaptive}
Ibrahim~Abaker Hashem, Fadele~Ayotunde Alaba, Muhammad~Haruna Jumare, Ashraf~Osman Ibrahim, and Anas~W Abulfaraj.
\newblock Adaptive stochastic conjugate gradient optimization for backpropagation neural networks.
\newblock {\em IEEE Access}, 2024.

\bibitem{hedar2020estimation}
Abdel-Rahman Hedar, Amira~A Allam, and Alaa Fahim.
\newblock Estimation of distribution algorithms with fuzzy sampling for stochastic programming problems.
\newblock {\em Applied Sciences}, 10(19):6937, 2020.

\bibitem{holmstrom1992using}
Lasse Holmstrom, Petri Koistinen, et~al.
\newblock Using additive noise in back-propagation training.
\newblock {\em IEEE transactions on neural networks}, 3(1):24--38, 1992.

\bibitem{huang2021stochastic}
Zhishen Huang and Stephen Becker.
\newblock Stochastic gradient langevin dynamics with variance reduction.
\newblock In {\em 2021 International Joint Conference on Neural Networks (IJCNN)}, pages 1--8. IEEE, 2021.

\bibitem{jin2020efficiently}
Yujia Jin and Aaron Sidford.
\newblock Efficiently solving mdps with stochastic mirror descent.
\newblock In {\em International Conference on Machine Learning}, pages 4890--4900. PMLR, 2020.

\bibitem{joachims2005support}
Thorsten Joachims.
\newblock A support vector method for multivariate performance measures.
\newblock In {\em Proceedings of the 22nd international conference on Machine learning}, pages 377--384, 2005.

\bibitem{johnson2013accelerating}
Rie Johnson and Tong Zhang.
\newblock Accelerating stochastic gradient descent using predictive variance reduction.
\newblock {\em Advances in neural information processing systems}, 26, 2013.

\bibitem{juditsky2011solving}
Anatoli Juditsky, Arkadii~S. Nemirovskii, and Claire Tauvel.
\newblock Solving variational inequalities with stochastic mirror-prox algorithm, 2011.

\bibitem{korpelevich1976extragradient}
Galina~M Korpelevich.
\newblock The extragradient method for finding saddle points and other problems.
\newblock {\em Matecon}, 12:747--756, 1976.

\bibitem{krichene2015accelerated}
Walid Krichene, Alexandre Bayen, and Peter~L Bartlett.
\newblock Accelerated mirror descent in continuous and discrete time.
\newblock {\em Advances in neural information processing systems}, 28, 2015.

\bibitem{lan2012optimal}
Guanghui Lan.
\newblock An optimal method for stochastic composite optimization.
\newblock {\em Mathematical Programming}, 133(1):365--397, 2012.

\bibitem{lan2019unified}
Guanghui Lan, Zhize Li, and Yi~Zhou.
\newblock A unified variance-reduced accelerated gradient method for convex optimization.
\newblock {\em Advances in Neural Information Processing Systems}, 32, 2019.

\bibitem{lehtonen2016lambert}
Jussi Lehtonen.
\newblock The lambert w function in ecological and evolutionary models.
\newblock {\em Methods in Ecology and Evolution}, 7(9):1110--1118, 2016.

\bibitem{lei2018stochastic}
Yunwen Lei and Ke~Tang.
\newblock Stochastic composite mirror descent: Optimal bounds with high probabilities.
\newblock {\em Advances in Neural Information Processing Systems}, 31, 2018.

\bibitem{lopes2007incremental}
Cassio~G Lopes and Ali~H Sayed.
\newblock Incremental adaptive strategies over distributed networks.
\newblock {\em IEEE transactions on signal processing}, 55(8):4064--4077, 2007.

\bibitem{mania2017perturbed}
Horia Mania, Xinghao Pan, Dimitris Papailiopoulos, Benjamin Recht, Kannan Ramchandran, and Michael~I Jordan.
\newblock Perturbed iterate analysis for asynchronous stochastic optimization.
\newblock {\em SIAM Journal on Optimization}, 27(4):2202--2229, 2017.

\bibitem{mnih2015human}
Volodymyr Mnih, Koray Kavukcuoglu, David Silver, Andrei~A Rusu, Joel Veness, Marc~G Bellemare, Alex Graves, Martin Riedmiller, Andreas~K Fidjeland, Georg Ostrovski, et~al.
\newblock Human-level control through deep reinforcement learning.
\newblock {\em nature}, 518(7540):529--533, 2015.

\bibitem{nazykov2024stochastic}
Ruslan Nazykov, Aleksandr Shestakov, Vladimir Solodkin, Aleksandr Beznosikov, Gauthier Gidel, and Alexander Gasnikov.
\newblock Stochastic frank-wolfe: Unified analysis and zoo of special cases.
\newblock In {\em International Conference on Artificial Intelligence and Statistics}, pages 4870--4878. PMLR, 2024.

\bibitem{nemirovski2004prox}
Arkadi Nemirovski.
\newblock Prox-method with rate of convergence o (1/t) for variational inequalities with lipschitz continuous monotone operators and smooth convex-concave saddle point problems.
\newblock {\em SIAM Journal on Optimization}, 15(1):229--251, 2004.

\bibitem{nemirovsky1983wiley}
AS~Nemirovsky, DB~Yudin, and ER~DAWSON.
\newblock Wiley-interscience series in discrete mathematics, 1983.

\bibitem{nesterov1983method}
Yurii Nesterov.
\newblock A method of solving a convex programming problem with convergence rate o (1/k** 2).
\newblock {\em Doklady Akademii Nauk SSSR}, 269(3):543, 1983.

\bibitem{Nesterov2003DualEA}
Yurii Nesterov.
\newblock Dual extrapolation and its applications to solving variational inequalities and related problems.
\newblock {\em Mathematical Programming}, 109:319--344, 2003.

\bibitem{Nesterov2005PrimaldualSM}
Yurii Nesterov.
\newblock Primal-dual subgradient methods for convex problems.
\newblock {\em Mathematical Programming}, 120:221--259, 2005.

\bibitem{nesterov2013introductory}
Yurii Nesterov.
\newblock {\em Introductory lectures on convex optimization: A basic course}, volume~87.
\newblock Springer Science \& Business Media, 2013.

\bibitem{ouyang2021lower}
Yuyuan Ouyang and Yangyang Xu.
\newblock Lower complexity bounds of first-order methods for convex-concave bilinear saddle-point problems.
\newblock {\em Mathematical Programming}, 185(1):1--35, 2021.

\bibitem{paulin2015concentration}
Daniel Paulin.
\newblock Concentration inequalities for markov chains by marton couplings and spectral methods.
\newblock 2015.

\bibitem{rardin1998optimization}
Ronald~L Rardin and Ronald~L Rardin.
\newblock {\em Optimization in operations research}, volume 166.
\newblock Prentice Hall Upper Saddle River, NJ, 1998.

\bibitem{robbins1951stochastic}
Herbert Robbins and Sutton Monro.
\newblock A stochastic approximation method.
\newblock {\em The annals of mathematical statistics}, pages 400--407, 1951.

\bibitem{srikant2019finite}
Rayadurgam Srikant and Lei Ying.
\newblock Finite-time error bounds for linear stochastic approximation andtd learning.
\newblock In {\em Conference on Learning Theory}, pages 2803--2830. PMLR, 2019.

\bibitem{sun2018markov}
Tao Sun, Yuejiao Sun, and Wotao Yin.
\newblock On markov chain gradient descent.
\newblock {\em Advances in neural information processing systems}, 31, 2018.

\bibitem{sutton2018reinforcement}
Richard~S Sutton and Andrew~G Barto.
\newblock {\em Reinforcement learning: An introduction}.
\newblock MIT press, 2018.

\bibitem{wang2022stability}
Puyu Wang, Yunwen Lei, Yiming Ying, and Ding-Xuan Zhou.
\newblock Stability and generalization for markov chain stochastic gradient methods.
\newblock {\em Advances in Neural Information Processing Systems}, 35:37735--37748, 2022.

\bibitem{xiao2024noise}
Li~Xiao, Zeliang Zhang, Kuihua Huang, Jinyang Jiang, and Yijie Peng.
\newblock Noise optimization in artificial neural networks.
\newblock {\em IEEE Transactions on Automation Science and Engineering}, 2024.

\bibitem{xu2004maximum}
Linli Xu, James Neufeld, Bryce Larson, and Dale Schuurmans.
\newblock Maximum margin clustering.
\newblock {\em Advances in neural information processing systems}, 17, 2004.

\bibitem{zhang2018improved}
Zijun Zhang.
\newblock Improved adam optimizer for deep neural networks.
\newblock In {\em 2018 IEEE/ACM 26th international symposium on quality of service (IWQoS)}, pages 1--2. Ieee, 2018.

\bibitem{zhao2023markov}
Yawei Zhao.
\newblock Markov chain mirror descent on data federation.
\newblock {\em arXiv preprint arXiv:2309.14775}, 2023.

\bibitem{zhou2017stochastic}
Zhengyuan Zhou, Panayotis Mertikopoulos, Nicholas Bambos, Stephen Boyd, and Peter~W Glynn.
\newblock Stochastic mirror descent in variationally coherent optimization problems.
\newblock {\em Advances in Neural Information Processing Systems}, 30, 2017.

\end{thebibliography}
\bibliographystyle{plain}

\newpage
\appendix
\onecolumn
\part*{Supplementary Material}

\tableofcontents
\newpage

%%% Appendix %%%

\section{Auxiliary Lemmas and Facts}
    \subsection{Convexity of the squared norm}
    \label{axil:squared}
        For all $x_1, ... , x_n \in \mathbb{R}^d$, where $n \in \N$ it holds that
        \begin{equation*}
            \norms{x_1 + x_2 + ... + x_n}^2 \leq n \norms{x_1}^2 + ... + n \norms{x_n}^2 . 
        \end{equation*}
    \subsection{Cauchy–Schwarz inequality}
    \label{axil:cauchy_schwarz}
        For all $x, y \in \mathbb{R}^d$
        \begin{equation*}
            \dotprod{x}{y} \leq \norms{x}\norms{y}_* .
        \end{equation*}
    \subsection{Fenchel-Young inequality}
    \label{axil:fenchel_young}
    For all $x, y \in \mathbb{R}^d$ and $\kappa > 0$
    \begin{equation*}
        2 \dotprod{x}{y} \leq \kappa^{-1} \|x\|^2 + \kappa\|y\|^2_* .
    \end{equation*}
    \subsection{Bregman Divergence properties}
    \begin{lemma}[Lemma 3 form \cite{juditsky2011solving}]
    \label{nemirovski:lem3}
        For every $x \in \mathcal{X}$, the mapping $\xi \mapsto P_x(\xi)$ is a single-valued mapping of $\mathcal{Y}$ onto $\mathcal{X}$, and this mapping is Lipschitz continuous, specifically,
        \begin{equation}
            \label{banan}
            \|P_x(\eta) - P_x(\zeta)\| \leq \|\eta - \zeta\|_*~~~\forall\eta, \zeta\in\mathcal{Y}.
        \end{equation}
        Besides that, for all $u \in \mathcal{X}$, 
        \begin{equation}
            \begin{split}
                &(a) ~~~~~~V(P_x(\zeta), u) \leq V(x, u) + \langle \zeta, u - P_x(\zeta)\rangle - V(x, P_x(\zeta))\\
                &(b)~~~~~~~~~~~~~~~~~~~~~~~~~~~\leq
                V(x, u) + \langle\zeta, u-x\rangle + \frac{\|\zeta\|_*^2}{2}
            \end{split}
        \end{equation}
    \end{lemma}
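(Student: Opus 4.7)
The plan is to prove the three claims of the lemma in the natural order: (i) single-valuedness, (ii) the Lipschitz bound \eqref{banan}, (iii) inequality (a), and (iv) inequality (b), each by short arguments from the variational characterization of $P_x$ and the $1$-strong convexity of $\omega$.

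For single-valuedness, I would observe that $y \mapsto V(x,y) + \langle \zeta, y\rangle$ is the sum of a $1$-strongly convex and lower semicontinuous function (from $\omega$) and a linear term, hence strongly convex and lower semicontinuous; since $\cX$ is closed and convex, the minimizer exists and is unique, so $P_x(\zeta)$ is well-defined as a map $\mathcal{Y}\to\cX$. For the Lipschitz bound, I would write first-order optimality at $u:=P_x(\eta)$ and $v:=P_x(\zeta)$, namely $\langle \omega'(u)-\omega'(x)+\eta, y-u\rangle\ge 0$ and $\langle \omega'(v)-\omega'(x)+\zeta, y-v\rangle\ge 0$ for all $y\in\cX$. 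Plugging $y=v$ into the first and $y=u$ into the second and adding yields
\begin{equation*}
\langle \omega'(u)-\omega'(v),\,u-v\rangle \;\le\; \langle \zeta-\eta,\,u-v\rangle.
\end{equation*}
The left-hand side is at least $\|u-v\|^2$ by $1$-strong convexity of $\omega$, while the right-hand side is bounded by $\|\zeta-\eta\|_*\|u-v\|$ via Cauchy–Schwarz, giving \eqref{banan} after division.

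For inequality (a), I would use the standard three-point identity for Bregman divergences: a direct expansion from the definition shows that for any $u,z,x$ with subgradients defined,
\begin{equation*}
V(z,u) \;=\; V(x,u) - V(x,z) - \langle \omega'(z)-\omega'(x),\, u-z\rangle.
\end{equation*}
Taking $z := P_x(\zeta)$, the first-order optimality condition reads $\langle \omega'(z)-\omega'(x)+\zeta,\, u-z\rangle \ge 0$, so $-\langle \omega'(z)-\omega'(x),\, u-z\rangle \le \langle \zeta,\, u-z\rangle$, and substituting into the identity above yields exactly (a). For inequality (b), I would rewrite the right-hand side of (a) by splitting $\langle \zeta, u-P_x(\zeta)\rangle = \langle \zeta, u-x\rangle + \langle \zeta, x-P_x(\zeta)\rangle$ and then show $\langle \zeta, x-P_x(\zeta)\rangle - V(x,P_x(\zeta)) \le \tfrac{1}{2}\|\zeta\|_*^2$. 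This follows from the Fenchel–Young inequality $\langle \zeta, x-z\rangle \le \tfrac{1}{2}\|\zeta\|_*^2 + \tfrac{1}{2}\|x-z\|^2$ together with the strong-convexity lower bound $V(x,z)\ge \tfrac{1}{2}\|x-z\|^2$, giving $\langle \zeta,x-z\rangle - V(x,z)\le \tfrac{1}{2}\|\zeta\|_*^2$.

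No step is genuinely hard; the only mild subtleties are handling the constraint $\cX$ (the optimality condition is a variational inequality, not an equation, which is why I use $y=v$ and $y=u$ as test points) and ensuring $\omega'(P_x(\zeta))$ exists so the three-point identity is valid—this is where the assumption that $\partial\omega$ is non-empty on the iterates and the standard interpretation of optimality with a selection $\omega'(P_x(\zeta))\in\partial\omega(P_x(\zeta))$ come in, and I would state this selection explicitly at the start of the proof to avoid ambiguity.
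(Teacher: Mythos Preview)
Your proof is correct and follows the standard route: optimality conditions plus strong convexity for the Lipschitz bound, the three-point identity for (a), and Fenchel--Young for (b). The paper itself does not supply a proof of this lemma; it is quoted verbatim from \cite{juditsky2011solving} as an auxiliary fact, so there is no in-paper argument to compare against. Your argument is essentially the one given in the original reference.
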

    \begin{lemma}[Lemma 4 from \cite{juditsky2011solving}]
    \label{nemirovski:lem4}
        Let $x \in \mathcal{X}$, let $\eta, \zeta$ be two points from $\mathcal{Y}$, and let
        \begin{equation*}
            w = P_x(\eta),~~~~r_+ = P_x(\zeta)
        \end{equation*}
        Then for all $u \in \mathcal{X}$ one has, 
        \begin{equation}
            \begin{split}
                V(r_+, u) - V(x, u) &\leq \langle \eta, u - w\rangle + \langle \eta, w - r_+\rangle - V(x, r_+) \\
                &\leq \langle \eta, u - w\rangle + \frac{1}{2}\|\zeta - \eta\|^2_*-\frac{1}{2}\|w - x\|^2. 
            \end{split}
        \end{equation}
    \end{lemma}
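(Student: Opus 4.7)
The plan is to apply Lemma~\ref{nemirovski:lem3} twice, once for each prox mapping, then use the three-point identity for Bregman divergences together with 1-strong convexity of $\omega$ and Fenchel-Young to collapse the residual terms into the clean final bound. The argument follows the classical key lemma underlying Mirror-Prox analysis.

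First I would apply Lemma~\ref{nemirovski:lem3}(a) at the prox point $r_+ = P_x(\zeta)$ with test point $u$, which gives the skeletal bound $V(r_+, u) - V(x, u) \leq \langle \zeta, u - r_+\rangle - V(x, r_+)$. To introduce $\eta$ and the intermediate point $w$ on the right-hand side, I would use the algebraic rewriting
\begin{equation*}
    \langle \zeta, u - r_+\rangle = \langle \eta, u - w\rangle + \langle \eta, w - r_+\rangle + \langle \zeta - \eta, u - r_+\rangle,
\end{equation*}
and handle the cross term $\langle \zeta - \eta, u - r_+\rangle$ via the first-order optimality of $w = P_x(\eta)$, namely $\langle \omega'(x) - \omega'(w), u - w\rangle \leq \langle \eta, u - w\rangle$, combined with the three-point identity applied to $V(r_+, u)$. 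This recovers the first stated inequality.

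For the second inequality, I would further decompose the remaining $\langle \eta, w - r_+\rangle$ by adding and subtracting $\zeta$: $\langle \eta, w - r_+\rangle = \langle \zeta, w - r_+\rangle + \langle \eta - \zeta, w - r_+\rangle$. The combination $\langle \zeta, w - r_+\rangle - V(x, r_+)$ is controlled via a second application of Lemma~\ref{nemirovski:lem3}(a) to $r_+$ with test point $w$, producing a bound of the form $-V(x, w) - V(w, r_+)$ after cancellations. The remaining term is bounded by Fenchel-Young (Appendix~\ref{axil:fenchel_young}) with $\kappa = 1$:
\begin{equation*}
    \langle \eta - \zeta, w - r_+\rangle \leq \tfrac{1}{2}\|\zeta - \eta\|_*^2 + \tfrac{1}{2}\|w - r_+\|^2.
\end{equation*}
The positive residual $\tfrac{1}{2}\|w - r_+\|^2$ is cancelled against $-V(w, r_+) \leq -\tfrac{1}{2}\|w - r_+\|^2$ by 1-strong convexity of $\omega$, while $-V(x, w) \leq -\tfrac{1}{2}\|w - x\|^2$ supplies the stated negative term.

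The main obstacle I anticipate is the bookkeeping: three Bregman quantities ($V(x, r_+)$, $V(x, w)$, $V(w, r_+)$) and three dual variables ($\eta$, $\zeta$, $\zeta - \eta$) interact, and the argument only closes with the tight $\tfrac{1}{2}$ coefficients if Lemma~\ref{nemirovski:lem3}(a) is applied at each step to the correct prox point with the correct test point. A useful cross-check is that the Lipschitz property $\|P_x(\eta) - P_x(\zeta)\| \leq \|\zeta - \eta\|_*$ from equation~(\ref{banan}) is a manifestation of the same underlying strong-convexity argument, which is reassuring for tracking the signs of the residual terms.
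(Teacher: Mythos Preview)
The paper does not prove this lemma; it is quoted from \cite{juditsky2011solving}. Moreover, the statement as transcribed here contains a typo: with $w = P_x(\eta)$ and $r_+ = P_x(\zeta)$ the first displayed inequality is false in general (take $\omega = \tfrac{1}{2}|\cdot|^2$ on $\cX = \R$, $x = 0$, $\eta = 1$, $\zeta = -1$, $u = 0$: the left side equals $\tfrac{1}{2}$ while the right side equals $-\tfrac{3}{2}$). In the original the roles of $\eta$ and $\zeta$ in the definitions of $w$ and $r_+$ are swapped, and indeed the paper's own application in Appendix~\ref{appendix:th3} silently uses that corrected assignment.

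Your argument has a genuine gap exactly where the typo bites. After Lemma~\ref{nemirovski:lem3}(a) you correctly obtain $\langle \zeta, u - r_+\rangle - V(x, r_+)$, and to reach the stated first line you must discard the cross term $\langle \zeta - \eta, u - r_+\rangle$. You propose to do this ``via the first-order optimality of $w = P_x(\eta)$\ldots combined with the three-point identity'', but the optimality condition for $w$ controls pairings of $\eta$ against directions through $w$; it says nothing about $\zeta - \eta$ paired with $u - r_+$ for an arbitrary $u \in \cX$. That step cannot be salvaged, since the inequality you are aiming for is false. For the \emph{corrected} statement your plan for the second inequality is essentially right, with one fix: the second application of Lemma~\ref{nemirovski:lem3}(a) must be at $w$ with test point $r_+$ (yielding the upper bound $\langle \zeta, w - r_+\rangle \leq V(x, r_+) - V(x, w) - V(w, r_+)$), not at $r_+$ with test point $w$, which produces an inequality in the wrong direction.
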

\addtocontents{toc}{\protect\setcounter{tocdepth}{2}}
\section{Proofs of results for \texttt{MAMD} without batching (Algorithm \ref{alg:MDGwb})}
\label{appendix:MDGwb}
    We start by introducing technical lemma similar to Lemma 5 in \cite{lan2012optimal}.
    \begin{lemma}
    \label{lemma:MDwb}
        Let $x^t, x_f^t$ and $x_g^t$ are computed according to Algorithm \ref{alg:MDGwb}. Then, for any $x \in \cX$ and $\eta_t > 0$ the following inequality holds:
        \begin{equation*}
        \begin{split}
            \beta_t \gamma_t (f(x^{t+1}_f) - f(x))
            &\leq
            (\beta_t-1)\gamma_t (f(x_f^t) - f(x))
            +
            \gamma_t \dotprod{\nabla f(x^t_g) - \nabla F(x^t_g, Z_t)}{x^{t} - x} 
            \\&+ 
            V(x^t, x) - V(x^{t+1}, x)
            +
            \frac{\gamma_t^2}{2 \eta_t}\| \nabla f(x^t_g) - \nabla F(x^t_g, Z_t) \|_*^2
            \\&-
            \frac{1}{2}\left(1 - \eta_t - \frac{L \gamma_t}{\beta_t} \right)\norms{x^{t+1} - x^t}^2 .
        \end{split}
        \end{equation*}
    \end{lemma}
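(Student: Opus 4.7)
The plan is to follow the standard Lan-style accelerated descent analysis adapted to the Bregman setting, exploiting three key identities built into the algorithm: (i) $x_f^{t+1}-x_g^t = \beta_t^{-1}(x^{t+1}-x^t)$, which lets smoothness produce the $L\gamma_t/\beta_t$ term; (ii) the convex-combination identity $x_g^t = \beta_t^{-1}x^t + (1-\beta_t^{-1})x_f^t$, which lets convexity produce the $(\beta_t-1)\gamma_t(f(x_f^t)-f(x))$ term; and (iii) the prox step definition $x^{t+1}=P_{x^t}(\gamma_t \nabla F(x_g^t,Z_t))$, which will be handled through Lemma \ref{nemirovski:lem3}(a) to yield the telescoping Bregman terms.

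First I would apply Assumption \ref{as:lip} to bound
$$f(x_f^{t+1}) \leq f(x_g^t) + \langle \nabla f(x_g^t), x_f^{t+1}-x_g^t\rangle + \tfrac{L}{2}\|x_f^{t+1}-x_g^t\|^2,$$
and substitute $x_f^{t+1}-x_g^t = \beta_t^{-1}(x^{t+1}-x^t)$. Then I would use Assumption \ref{as:conv} twice, at $x$ and at $x_f^t$, forming the $\beta_t^{-1}/(1-\beta_t^{-1})$ convex combination; using the identity for $x_g^t$, the gradient term collapses to $\beta_t^{-1}\langle \nabla f(x_g^t), x^t-x\rangle$. Adding these two steps and multiplying through by $\beta_t\gamma_t$ gives an intermediate bound of the form
$$\beta_t\gamma_t(f(x_f^{t+1})-f(x)) \leq (\beta_t-1)\gamma_t(f(x_f^t)-f(x)) + \gamma_t\langle \nabla f(x_g^t), x^{t+1}-x\rangle + \tfrac{L\gamma_t}{2\beta_t}\|x^{t+1}-x^t\|^2.$$

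Next, I would split $\nabla f(x_g^t) = \nabla F(x_g^t,Z_t) + (\nabla f(x_g^t)-\nabla F(x_g^t,Z_t))$ inside the inner product. The $\nabla F(x_g^t,Z_t)$ piece is handled by Lemma \ref{nemirovski:lem3}(a) applied with $\zeta = \gamma_t \nabla F(x_g^t,Z_t)$, which gives
$$\gamma_t\langle \nabla F(x_g^t,Z_t), x^{t+1}-x\rangle \leq V(x^t,x) - V(x^{t+1},x) - V(x^t,x^{t+1}),$$
and $1$-strong convexity of $\omega$ gives $V(x^t,x^{t+1}) \geq \tfrac12 \|x^{t+1}-x^t\|^2$. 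For the noise piece I would further split $x^{t+1}-x = (x^t-x) + (x^{t+1}-x^t)$; the $x^t-x$ part produces exactly the $\gamma_t\langle \nabla f(x_g^t)-\nabla F(x_g^t,Z_t), x^t-x\rangle$ term in the claim, while Fenchel–Young (Appendix \ref{axil:fenchel_young}) with parameter $\eta_t$ applied to the $x^{t+1}-x^t$ part yields
$$\tfrac{\gamma_t^2}{2\eta_t}\|\nabla f(x_g^t)-\nabla F(x_g^t,Z_t)\|_*^2 + \tfrac{\eta_t}{2}\|x^{t+1}-x^t\|^2.$$
Collecting all the $\|x^{t+1}-x^t\|^2$ coefficients produces $-\tfrac12(1-\eta_t-L\gamma_t/\beta_t)$, matching the claimed inequality.

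No step is conceptually difficult; the main thing to watch is the bookkeeping of the $\beta_t$ and $\gamma_t$ factors when combining the smoothness and convexity estimates, and making sure that the splitting of $x^{t+1}-x$ directs exactly one copy of the noise inner product against $x^t-x$ (to be controlled later via the stepping-back argument described in the main text) and absorbs the other into the $\|x^{t+1}-x^t\|^2$ slack via Fenchel–Young.
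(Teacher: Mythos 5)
Your proposal is correct and follows essentially the same route as the paper: smoothness with the identity $x_f^{t+1}-x_g^t=\beta_t^{-1}(x^{t+1}-x^t)$, convexity applied via the convex-combination structure of $x_g^t$, the three-point prox inequality to generate the telescoping Bregman terms, and Fenchel--Young to absorb the noise against the $\|x^{t+1}-x^t\|^2$ slack. The only cosmetic difference is that you apply convexity at $x$ before invoking the prox lemma (obtaining $\langle\nabla f(x_g^t),x^{t+1}-x\rangle$ directly and then splitting off the noise against $x^t-x$), whereas the paper defers it and recombines the two $\langle\delta^t,\cdot-x_g^t\rangle$ pieces afterwards; the resulting terms are identical.
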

    \begin{proof}
        Define $d^t := x^{t+1} - x^t$. Then we can obtain
        \begin{equation*}
            x^{t+1}_f - x^t_g = \beta_t^{-1} x^{t+1} + (1 - \beta_t^{-1})x^t_f - x^t_g = \beta_t^{-1} (x^{t+1} - x^t) = \beta_t^{-1} d^t .
        \end{equation*}
        Using this and $L$-smooth of the function $f$ we can obtain that
        \begin{equation}
        \label{eq:tmp_th1_1}
        \begin{split}
            \beta_t \gamma_t f(x^{t+1}_f) 
            &\leq
            \beta_t \gamma_t \left[ f(x^t_g) + \dotprod{\nabla f(x^t_g)}{x^{t+1}_f - x^t_g} + \frac{L}{2}\norms{x_f^{t+1} - x^t_g}^2\right]
            \\&=
            \beta_t \gamma_t \left[ f(x^t_g) + \dotprod{\nabla f(x^t_g)}{x^{t+1}_f - x^t_g}\right] + \frac{L \gamma_t}{2 \beta_t}\norms{d^t}^2
            \\&\leq
            \beta_t \gamma_t \left[ f(x^t_g) + \dotprod{\nabla f(x^t_g)}{x^{t+1}_f - x^t_g}\right]
            +
            V(x^t, x^{t+1}) - \frac{\beta_t - L \gamma_t}{2 \beta_t}\norms{d^t}^2 .
        \end{split}
        \end{equation}
        Where the last inequality follows from the $1$-strongly convexity of the $\omega(\cdot)$: $V(x, y) \geq 1/2 \|x-y\|^2$ for all $x, y \in \cX$. Next we estimate term under $[ \cdot ]$ in the \eqref{eq:tmp_th1_1}:
        \begin{equation*}
        \begin{split}
            \beta_t \gamma_t \Big[ f(x^t_g) &+ \dotprod{\nabla f(x^t_g)}{x^{t+1}_f - x^t_g}\Big]
            \\&=
            \beta_t \gamma_t \left[ f(x^t_g) + \dotprod{\nabla f(x^t_g)}{(1 - \beta_t^{-1})x^t_f + \beta_t^{-1}x^{t+1} - x^t_g}\right]
            \\&=
            (\beta_t-1)\gamma_t \left[ f(x^t_g) + \dotprod{\nabla f(x^t_g)}{x^t_f - x^t_g}\right]
            +
            \gamma_t \left[ f(x^t_g) + \dotprod{\nabla f(x^t_g)}{x^{t+1} - x^t_g}\right]
            \\&\overset{(a)}{\leq}
            (\beta_t-1)\gamma_t f(x_f^t)
            +
            \gamma_t \left[ f(x^t_g) + \dotprod{\nabla f(x^t_g)}{x^{t+1} - x^t_g}\right]
            \\&=
            (\beta_t-1)\gamma_t f(x_f^t)
            +
            \gamma_t \left[ f(x^t_g) + \dotprod{\nabla F(x^t_g, Z_t)}{x^{t+1} - x^t_g}\right]
            \\&+
            \gamma_t  \dotprod{\underbrace{\nabla f(x^t_g) - \nabla F(x^t_g, Z_t)}_{\delta^t}}{x^{t+1} - x^t_g}
            \\&=
            (\beta_t-1)\gamma_t f(x_f^t)
            +
            \gamma_t \left[ f(x^t_g) + \dotprod{\nabla F(x^t_g, Z_t)}{x^{t+1} - x^t_g}\right]
            \\&+
            \gamma_t \left[ \dotprod{\delta^t}{x^{t} - x^t_g} + \dotprod{\delta^t}{d^t}\right]
            \\&\leq
            (\beta_t-1)\gamma_t f(x_f^t)
            +
            \gamma_t \left[f(x^t_g) + \dotprod{\nabla F(x^t_g, Z_t)}{x^{t+1} - x^t_g}\right]
            \\&+
            \gamma_t \left[ \dotprod{\delta^t}{x^{t} - x^t_g} + \|\delta^t\|_*\|d^t\|\right] .
        \end{split}
        \end{equation*}
        where $(a)$ is due to the convexity of the function $f$. Using this result and \eqref{eq:tmp_th1_1} we can obtain
        \begin{equation}
        \label{eq:tmp_th1_2}
        \begin{split}
            \beta_t \gamma_t f(x^{t+1}_f) 
            &\leq
            \beta_t \gamma_t \left[ f(x^t_g) + \dotprod{\nabla f(x^t_g)}{x^{t+1}_f - x^t_g}\right]
            +
            V(x^t, x^{t+1}) - \frac{\beta_t - L \gamma_t}{2 \beta_t}\norms{d^t}^2
            \\&\leq
            (\beta_t-1)\gamma_t f(x_f^t)
            +
            \gamma_t \left[ f(x^t_g) + \dotprod{\nabla F(x^t_g, Z_t)}{x^{t+1} - x^t_g}\right]
            \\&+
            \gamma_t \left[ \dotprod{\delta^t}{x^{t} - x^t_g} + \|\delta^t\|_*\|d^t\|\right] 
            +
            V(x^t, x^{t+1}) - \frac{\beta_t - L \gamma_t}{2 \beta_t}\norms{d^t}^2
        \end{split}
        \end{equation}
        Using Lemma 1 from \cite{lan2012optimal} with $\tilde{x} = x^t$ and $p(\xi) = \gamma_t \dotprod{\nabla F(x^t_g)}{\xi - x^t_g}$ and line 5 of Algorithm \ref{alg:MDGwb} we obtain that for all $x \in \cX$ it holds that
        \begin{equation*}
        \begin{split}
            \gamma_t \Big[ f(x^t_g) &+ \dotprod{\nabla F(x^t_g, Z_t)}{x^{t+1} - x^t_g}\Big] + V(x^t, x^{t+1})
            \\&\leq
            \gamma_t f(x^t_g) + \gamma_t \dotprod{\nabla F(x^t_g, Z_t)}{x - x^t_g} + V(x^t, x) - V(x^{t+1}, x)
            \\&=
            \gamma_t f(x^t_g) + \gamma_t \dotprod{\nabla f(x^t_g)}{x - x^t_g} 
            -
            \gamma_t \dotprod{\delta^t}{x - x^t_g}
            + 
            V(x^t, x) - V(x^{t+1}, x)
            \\&\leq
            \gamma_t f(x)
            -
            \gamma_t \dotprod{\delta^t}{x - x^t_g}
            + 
            V(x^t, x) - V(x^{t+1}, x) .
        \end{split}
        \end{equation*}
        Where the last inequality is due convexity of function $f$. Now we combine this result with \eqref{eq:tmp_th1_2} and use Fenchel-Young inequality \ref{axil:fenchel_young} with $\kappa = \eta_t$:
        \begin{equation}
        \label{eq:res_lemma_MDwb}
        \begin{split}
            \beta_t \gamma_t f(x^{t+1}_f) 
            &\leq
            (\beta_t-1)\gamma_t f(x_f^t)
            +
            \gamma_t \left[ f(x^t_g) + \dotprod{\nabla F(x^t_g, Z_t)}{x^{t+1} - x^t_g}\right]
            \\&+
            \gamma_t \left[ \dotprod{\delta^t}{x^{t} - x^t_g} + \|\delta^t\|_*\|d^t\|\right] 
            +
            V(x^t, x^{t+1}) - \frac{\beta_t - L \gamma_t}{2 \beta_t}\norms{d^t}^2
            \\&\leq
            (\beta_t-1)\gamma_t f(x_f^t) + \gamma_t f(x)
            +
            \gamma_t \dotprod{\delta^t}{x^{t} - x} 
            + 
            V(x^t, x) - V(x^{t+1}, x)
            \\&+
            \frac{\gamma_t^2}{2 \eta_t}\| \delta^t\|_*^2
            -
            \frac{1}{2}\left(1 - \eta_t - \frac{L \gamma_t}{\beta_t} \right)\norms{d^t}^2 .
        \end{split}
        \end{equation}
        Subtraction $\beta_t \gamma_t f(x)$ from both sides of \eqref{eq:res_lemma_MDwb} finishes the proof.
    \end{proof}
    Now we ready to prove Theorem \ref{theorem:MDGwb}.
    \begin{proof}[Proof of Theorem \ref{theorem:MDGwb}]
        Let $x^* \in \cX$ be and arbitrary solution to \eqref{eq:problem1}, i.e. $f(x^*) = f^*$. Then using Lemma \eqref{lemma:MDwb} with $x = x^*$ we obtain
        \begin{equation}
        \label{eq:tmp_th1_3}
        \begin{split}
            \beta_t \gamma_t (f(x^{t+1}_f) - f^*)
            &\leq
            (\beta_t-1)\gamma_t (f(x_f^t) - f^*)
            +
            \gamma_t \dotprod{\nabla f(x^t_g) - \nabla F(x^t_g, Z_t)}{x^{t} - x^*} 
            \\&+ 
            V(x^t, x^*) - V(x^{t+1}, x^*)
            +
            \frac{\gamma_t^2}{2 \eta_t}\| \nabla f(x^t_g) - \nabla F(x^t_g, Z_t) \|_*^2
            \\&-
            \frac{1}{2}\left(1 - \eta_t - \frac{L \gamma_t}{\beta_t} \right)\norms{x^{t+1} - x^t}^2 .
        \end{split}
        \end{equation}
        If we define $\delta^t := \nabla f(x^t_g) - \nabla F(x^t_g, Z_t)$ then, we obtain
        $\dotprod{\delta^t}{x^{t} - x^*} =
            \dotprod{\delta^t}{x^{t-\tau} - x^*}
            +
            \sum_{s = 1}^{\tau}\dotprod{\delta^t}{x^{t-s+1} - x^{t-s}}.
        $
        Consider term of the from $\dotprod{\delta^t}{x^{t-\tau} - x^*}$. Taking the full mathematical expectation $\expect{\cdot}$, using tower property and Cauchy-Schwarz inequality \ref{axil:cauchy_schwarz}  we obtain 
        $
            \expect{\dotprod{\delta^t}{x^{t-\tau} - x^*}}
            =
            %\expect{\EEb{t - \tau}{\dotprod{\delta^t}{x^{t-\tau} - x^*}}}
            %=
            \expect{\dotprod{\EEb{t - \tau}{\delta^t}}{x^{t-\tau} - x^*}}
            \leq
            \expect{\|\EEb{t - \tau}{\delta^t}\|_*\|x^{t-\tau} - x^*\|},
        $
        where $\EEb{t - \tau}{\cdot}$ is conditional mathematical expectation with fixed randomness up to step $t - \tau$. Consider term of the form $\|\EEb{t - \tau}{\delta^t}\|_*$. 
        \begin{equation*}
        \begin{split}
            \|\EEb{t - \tau}{\delta^t}\|_*
            &=
            \left\| \sum\limits_{Z \in \cZ} \mathbb{P} \left\{ \delta^t = \nabla f(x^t_g) - \nabla F(x^t_g, Z) | \delta^{t - \tau} \right\} \delta^t \right\|_*
            \\&\overset{(a)}{=}
            \left\| \sum\limits_{Z \in \cZ} \left(\mathbb{P} \left\{ \delta^t = \nabla f(x^t_g) - \nabla F(x^t_g, Z) | \delta^{t - \tau} \right\} - \pi_Z \right) \delta^t \right\|_*
            \\&\leq
            \sum\limits_{Z \in \cZ} \left|\mathbb{P} \left\{ \delta^t = \nabla f(x^t_g) - \nabla F(x^t_g, Z) | \delta^{t - \tau} \right\} - \pi_Z \right| \left\| \delta^t \right\|_*
            \\&\overset{(b)}{\leq}
            \frac{1}{2^{\tau / \tau_{\text{mix}}} \pi_{\min}} \sum\limits_{Z \in M} \pi_Z \left\| \delta^t \right\|_*
            \overset{(c)}{\leq}
            \frac{1}{2^{\tau / \tau_{\text{mix}}} \pi_{\min}} \sigma .
        \end{split}
        \end{equation*}
        The $(a)$ transition is due to first part of Assumption \ref{as:noise}: $\EEb{\pi}{\nabla F(x, Z) - \nabla f(x)} = 0$, $(b)$ is due to second part Assumption \ref{as:noise} and definition $\pi_{\min} := \min_{Z \in \cZ} \pi_Z$ and $(c)$ follows from Assumption \ref{as:var}.

        Using the fact, that $\cX$ is compact, we can estimate term of the form $\dotprod{\delta^t}{x^{t-\tau} - x^*}$: 
        \begin{equation}
        \label{eq:th1_eps}
            \expect{\dotprod{\delta^t}{x^{t-\tau} - x^*}} \leq \frac{1}{2^{\tau / \tau_{\text{mix}}} \pi_{\min}} D \sigma.
        \end{equation}
        Using $s$ times Fenchel-Young \ref{axil:fenchel_young} inequality with $\kappa = \kappa_t$ and Assumption \ref{as:var}, we can obtain that 
        \begin{equation}
        \label{eq:th1_tau}
            \sum_{s = 1}^{\tau}\dotprod{\delta^t}{x^{t-s+1} - x^{t-s}}
            \leq
            \frac{\sigma^2 \tau }{2 \kappa_t}
            +
            \frac{1}{2} \sum_{s = 1}^{\tau} \kappa_t \|x^{t-s+1} - x^{t-s}\|^2.
        \end{equation}
        Combining \eqref{eq:th1_eps} and \eqref{eq:th1_tau}, and using Assumption \ref{as:var} we can estimate expectation of \eqref{eq:tmp_th1_3}:
        \begin{equation}
        \label{eq:tmp_th1_4}
        \begin{split}
            \beta_t \gamma_t \expect{f(x^{t+1}_f) - f^*}
            &\leq
            (\beta_t-1)\gamma_t \expect{f(x_f^t) - f^*}
            \\&+
            \frac{\gamma_t}{2^{\tau / \tau_{\text{mix}}} \pi_{\min}} D \sigma
            +
            \frac{\gamma_t \sigma^2 \tau}{2 \kappa_t}
            +
            \frac{\gamma_t}{2} \sum_{s = 1}^{\tau} \kappa_t \expect{\|x^{t-s+1} - x^{t-s}\|^2}.
            \\&+ 
            \expect{V(x^t, x^*)} - \expect{V(x^{t+1}, x^*)}
            +
            \frac{\gamma_t^2}{2 \eta_t} \sigma^2
            \\&-
            \frac{1}{2}\left(1 - \eta_t - \frac{L \gamma_t}{\beta_t} \right)\expect{\norms{x^{t+1} - x^t}^2} .
        \end{split}
        \end{equation}
        Using the fact, that $0 \leq (\beta_{t+1} - 1) \gamma_{t+1} \leq \beta_t \gamma_t$ and $\beta_\tau = 1$ and summing \eqref{eq:tmp_th1_4} from $t = \tau$ to $t = T-1$ we obtain
        \begin{equation}
        \label{eq:tmp_th1_5}
        \begin{split}
            (\beta_{T} - 1) \gamma_{T} \expect{f(x^{T}_f) - f^*}
            &\leq
            \sum\limits_{t=\tau}^T\frac{\gamma_t}{2^{\tau / \tau_{\text{mix}}} \pi_{\min}} D \sigma
            +
            \sum\limits_{t=\tau}^T \left( \frac{\gamma_t \tau}{2 \kappa_t} + \frac{\gamma_t^2}{2 \eta_t} \right) (T - \tau) \sigma^2
            \\&+ 
            \expect{V(x^\tau, x^*)} - \expect{V(x^{T}, x^*)}
            \\&+
            \sum\limits_{t = \tau}^{T-1} \frac{\gamma_t}{2} \sum_{s = 1}^{\tau} \kappa_t \expect{\|x^{t-s+1} - x^{t-s}\|^2}
            \\&-
            \sum\limits_{t = \tau}^{T-1} \frac{1}{2}\left(1 - \eta_t - \frac{L \gamma_t}{\beta_t} \right)\expect{\norms{x^{t+1} - x^t}^2} .
        \end{split}
        \end{equation}
        Using the fact, that for all $x, y \in \cX$ it holds that $0 \leq V(x, y) \leq D^2$, Fenchel-Young inequality \ref{axil:fenchel_young} with $\kappa = 1/(T-\tau)$, taking $\kappa_t = 1/(4 \gamma_t \tau^2), \eta_t = 1/4$ and using the fact that $\sum_{t = \tau}^{T-1} \sum_{s = 0}^t a_s \leq \tau \sum_{t=0}^{T-1} a_t$, we can estimate \eqref{eq:tmp_th1_5}:
        \begin{equation}
        \label{eq:tmp_th1_6}
        \begin{split}
            (\beta_{T} - 1) \gamma_{T} \expect{f(x^{T}_f) - f^*}
            &\leq
            \frac{3 D^2}{2}
            +
            \sum\limits_{t=\tau}^T \gamma_t^2 \left( \frac{T-\tau}{4^{\tau / \tau_{\text{mix}}}\pi_{\min}^2} + \frac{\tau^3}{4} \right) \sigma^2
            \\&-
            \frac{1}{2}\sum\limits_{t = \tau}^{T-1} \left( \frac{1}{2} - \frac{L \gamma_t}{\beta_t} \right)\expect{\|x^{t+1} - x^{t}\|^2}
            \\&+
            \frac{1}{8 \tau}\sum\limits_{t = 0}^{\tau-1}\expect{\norms{x^{t+1} - x^t}^2} .
        \end{split}
        \end{equation}
        Taking $\beta_t \geq 2 L \gamma_t$ and using the fact that for all $x, y \in \cX$ holds that $\|x-y\|^2 \leq 2 D^2$ we obtain
        \begin{equation*}
        \begin{split}
            (\beta_{T} - 1) \gamma_{T} \expect{f(x^{T}_f) - f^*}
            &\leq
            \left(\frac{3}{2} + \frac{1}{4} \right) D^2
            +
            \left( \frac{T-\tau}{4^{\tau / \tau_{\text{mix}}} \pi_{\min}^2} + \frac{\tau^3}{4} \right) \sigma^2 \sum\limits_{t=\tau}^T \gamma_t^2.
        \end{split}
        \end{equation*}
        Now we need to compare two term of the form $\frac{T-\tau}{4^{\tau / \tau_{\text{mix}}} \pi_{\min}^2}$ and $\frac{\tau^3}{4}$. If we consider $\tau = \tau_{\text{mix}} \tilde{\tau}$, then we obtain that $\tilde{\tau} \sim W(\sqrt{3}{T} / \tau_{\text{mix}})$, where $W$ is Lambert $W$ function \cite{lehtonen2016lambert} and it grows slower than the logarithm function. Therefore we can conclude, that $\tau = \tilde{\cO}(\tau_{\text{mix}})$ and get the result of the form 
        \begin{equation*}
            (\beta_{T} - 1) \gamma_{T} \expect{f(x^{T}_f) - f^*}
            \leq
            2 D^2
            +
            \frac{\tau_{\text{mix}}^3}{4} \sigma^2 \sum\limits_{t=\tau_{\text{mix}}}^T \gamma_t^2.
        \end{equation*}    
        This finishes the proof.
    \end{proof}
    \begin{proof}[Proof of Corollary \ref{corollary:MDGwb}]
        From Theorem \ref{theorem:MDGwb} we can obtain, that
        \begin{equation*}
            (\beta_{T} - 1) \gamma_{T} \expect{f(x^{T}_f) - f^*}
            = \cO \left(
            D^2
            +
            \tau_{\text{mix}}^3 \sigma^2 \sum\limits_{t=\tau_{\text{mix}}}^T \gamma_t^2 \right) .
        \end{equation*}
        Choosing $\beta_t$ and $\gamma_t$ as 
        \begin{equation*}
            \beta_t := \max\left\{ \frac{t - \tau_{\text{mix}}}{2} + 1 ~;~ 1 \right\} 
            ~\text{ and }~
            \gamma_t := \max\left\{ \frac{t - \tau_{\text{mix}}}{2} + 1 ~;~ 1 \right\} \cdot 
            \gamma_*,
        \end{equation*}
        we obtain:
        \begin{equation*}
            \expect{f(x^{T}_f) - f^*}
            = \cO \left(
            \frac{D^2}{\gamma_* (T-\tau_{\text{mix}})^2}
            +
            \frac{\tau_{\text{mix}}^3 \sigma^2}{(T-\tau_{\text{mix}})^2 \gamma_*} \sum\limits_{t=\tau_{\text{mix}}}^T \gamma_t^2 \right) .
        \end{equation*}
        Using the fact that $\sum_{t=0}^N t^2 \leq \int_0^N x^2 dx = N^3/3$, we obtain
        \begin{equation*}
            \expect{f(x^{T}_f) - f^*}
            = \cO \left(
            \frac{D^2}{\gamma_* (T-\tau_{\text{mix}})^2}
            +
            \tau_{\text{mix}}^3 \sigma^2 \gamma_*(T-\tau_{\text{mix}}) \right) .
        \end{equation*}
        Choosing $\gamma_*$ as 
        $$\gamma_* := \min\left\{ \frac{1}{2L} ~;~ \frac{D}{(T - \tau_{\text{mix}})^{3/2} \sigma \tau_{\text{mix}}^{3/2}} \right\},$$
        we obtain
        \begin{equation*}
            \expect{f(x^{T}_f) - f^*}
            = \cO \left(
            \frac{2 L D^2}{(T-\tau_{\text{mix}})^2}
            +
            \frac{D \tau_{\text{mix}}^{3/2} \sigma}{\sqrt{T-\tau_{\text{mix}}}} \right) .
        \end{equation*}
        This finishes the proof.
    \end{proof}
\section{Proof of Lemmas \ref{lem:xuivjopeentertainment} and \ref{lem:expect_bound_grad}} 
\label{appendix:two_lemmas}
\begin{proof}[Proof of Lemma \ref{lem:xuivjopeentertainment}]
    Let $B_{\|\cdot\|}^1 := \{x \in \R^d : \|x\| = 1\}$, then for $\gamma > 0$, witch will be chosen later we define the sequence $\{u\}_{t=1}^{\infty}$ as follows:
    \begin{equation*}
        u^1 = 0,~~~~u^{t+1}:=\underset{u\in B_{\|\cdot\|}^1}{\argmin}\Big\{ \hat{V}(u, u^t) + \langle \gamma\xi^t, u\rangle\Big\},
    \end{equation*}
    where $\hat{V}(\cdot, \cdot)$ - is an arbitrary Bregman divergence. From \cite{juditsky2011solving} Corollary 2 we know that:
    \begin{equation*}
        \underset{u\in B_{\|\cdot\|}^1}{\max}\Bigg\{\bigg\langle \sum\limits_{t = 1}^N\gamma\xi^t, u\bigg\rangle\Bigg\} \leq \underbrace{\underset{u\in B_{\|\cdot\|}^1}{\max}\{\hat{V}(u, u^1)\}}_{\circledOne} + \underbrace{\frac{1}{2}\sum\limits_{t = 1}^N\|\gamma\xi^t\|^2_*}_{\circledTwo} + \underbrace{\sum\limits_{t = 1}^N\langle \gamma\xi^t, u^t\rangle}_{\circledThree}.
    \end{equation*}
    The first term $\circledOne$ is bounded the definition of $R^2:$  $R^2 := \max_{u \in B_{\|\cdot\|}^1}\{\hat{V}(0, u)\}$. From conditions of the Lemma we have: $\circledTwo \leq \gamma / 2 N \sigma^2$. Then by definition of the conjugate norm we have:
    \begin{equation*}
        \left\|\sum\limits_{t = 1}^N\xi_t\right\|_* \leq \frac{R^2}{\gamma} + \frac{\gamma N\sigma^2}{2} + \sum\limits_{t=1}^N\langle\xi^t, u^t\rangle.
    \end{equation*}
    Using the convexity of the squared norm we have:
    \begin{equation*}
        \begin{split}
            \left\|\sum\limits_{t = 1}^N\xi^t\right\|^2_* &\leq \frac{3R^4}{\gamma^2} + \frac{3\gamma^2B^2\sigma^4}{4} + 3\Bigg(\sum\limits_{t=1}^N\langle\xi^t, u^t\rangle\Bigg)^2\\
            &=
            \frac{3R^4}{\gamma^2} + \frac{3\gamma^2B^2\sigma^4}{4} + 3\sum\limits_{t=1}^N\langle\xi^t, u^t\rangle^2 + 3\sum\limits_{t \neq j}^N(\langle\xi^t, u^t\rangle\cdot\langle\xi^j, u^j\rangle).
        \end{split}
    \end{equation*}
    Consider the last term. Taking the full mathematical expectation one can obtain:
    \begin{equation*}
        \sum\limits_{t\neq j}\E[\langle\xi^t, u^t\rangle\cdot\langle\xi^j, u^j\rangle] = 2\sum\limits_{0\leq t -j \leq \tau}\E[\langle\xi^t, u^t\rangle\cdot\langle\xi^j, u^j\rangle] + 2\sum\limits_{t - j > \tau}\E[\langle\xi^t, u^t\rangle\cdot\langle\xi^j, u^j\rangle].
    \end{equation*}
    Now using the fact that $u\in B_{\|\cdot\|}^1$ and Cauchy-Schwarz inequality, we have:
    \begin{equation*}
        \sum\limits_{t\neq j}\E[\langle\xi^t, u^t\rangle\cdot\langle\xi^j, u^j\rangle] \leq 2\tau N\sigma^2 + 2\sum\limits_{t - j > \tau}\underbrace{\E[\langle\xi^t, u^t\rangle\cdot\langle\xi^j, u^j\rangle]}_{\circledFour}.
    \end{equation*}
    Consider $\circledFour$. Using tower property one can obtain:
    \begin{equation*}
        \E[\langle\xi^t, u^t\rangle\cdot\langle\xi^j, u^j\rangle] = \E[\E[\langle \xi^t, u^t\rangle|\xi^0, \ldots, \xi^{t - \tau}]\cdot \langle \xi^t, u^t\rangle].
    \end{equation*}
    Hence, we have:
    \begin{equation*}
    \begin{split}
        \E[\langle \xi^t, u^t\rangle|\xi^0, \ldots, \xi^{t - \tau}] &= \E_{t - \tau}[\langle \xi^t, u^t\rangle] = \E_{t - \tau}[\langle \xi^t, u^{t-\tau}\rangle] + \E_{t - \tau}[\langle \xi^t, u^t - u^{t-\tau}\rangle] \\
        &= \E_{t - \tau}[\langle \xi^t, u^{t-\tau}\rangle] + \sum\limits_{s = 1}^{\tau}\E_{t - \tau}[\langle \xi^t, u^{t-s+1} - u^{t-s}\rangle] \\
        &= \E_{t - \tau}[\langle \xi^t, u^{t-\tau}\rangle] + \sum\limits_{s = 1}^{\tau}\E_{t - \tau}[\langle \xi^t, \hat{P}_{u^{t-s}}(\gamma\xi^{t-s}) - \hat{P}_{u^{t-s}}(0)\rangle].
    \end{split}
    \end{equation*}
    Define $\mathcal{M}$ as state space of the Markov chain $\{\xi_t\}_{t=0}^\infty$ and consider the first summand:
    \begin{equation*}
    \begin{split}
        \E_{t - \tau}[\langle \xi^t, u^{t-\tau}&\rangle] \leq \|\E[\xi^t|\xi^{t-\tau},\ldots, \xi^{t-1}]\|_*\cdot\|u^{t - \tau}\|\\
        &\overset{(a)}{=}\|\E[\xi^t|\xi^{t-\tau},\ldots, \xi^{t-1}]\|_* \overset{(b)}{=} \left\| \|\sum\limits_{\nu\in \mathcal{M}}(\mathbb{P}\{\xi^t = \nu|\xi^{t-\tau}\} - \pi_{\nu})\nu \right\|_*\\
        &\leq \sum\limits_{\nu\in \mathcal{M}}|\mathbb{P}\{\xi^t = \nu|\xi^{t-\tau}\} - \pi_\nu|\cdot\|\nu\|_* \overset{(c)}{\leq} (1/2)^{\tau / \tau_{\text{mix}}} \sum\limits_{\nu\in \mathcal{M}} \pi_\nu\|\nu\|_* \leq (1/2)^{\tau / \tau_{\text{mix}}} \sigma,
    \end{split}
    \end{equation*}
    where $(a)$ follows from $u\in B_{\|\cdot\|}^1$, $(b)$ derives from $\E_\pi[\xi_t] = 0$ and $(c)$ ensues from the Assumption \ref{as:noise}.
    Consider the second summand:
    \begin{equation*}
        \E_{t - \tau}[\langle \xi^t, P_{u^{t-s}}(\gamma\xi^{t-s}) - P_{u^{t-s}}(0)\rangle] \overset{(a)}{\leq} \sigma \|P_{u^{t-s}}(\gamma\xi^{t-s}) - P_{u^{t-s}}(0)\| \overset{(b)}{\leq} \sigma \|\gamma\xi^{t-s} - 0\|_* \leq \gamma \sigma^2,
    \end{equation*}
    where $(a)$ follows from Cauchy-Schwarz inequality \ref{axil:cauchy_schwarz} and $(b)$ follows from Lemma \ref{nemirovski:lem3}.\\
    Combining this considerations we have:
    \begin{equation*}
        \E[\langle\xi^t, u^t\rangle\cdot\langle\xi^j, u^j\rangle] \leq (2^{-\tau / \tau_{\text{mix}}}\sigma + \tau\gamma\sigma^2)\E[\langle\xi_j, u_j\rangle]
    \end{equation*}
    Then for $j \leq \tau$ one can obtain: $$\E[\langle\xi^t, u^t\rangle\cdot\langle\xi^j, u^j\rangle] \leq (2^{-\tau / \tau_{\text{mix}}} +\tau\gamma)\sigma^2,$$
    and for $j>\tau$: $$\E[\langle\xi^j, u^j\rangle] \leq \varepsilon\sigma +\tau\gamma\sigma^2.$$
    Thus, we have:
    \begin{equation*}
        \E[\langle\xi_t, u_t\rangle\cdot\langle\xi_j, u_j\rangle] \leq 
        \begin{cases}
            \sigma^2, &~~\text{if } j\leq t\leq j+\tau\\
            (2^{-\tau / \tau_{\text{mix}}} + \tau\gamma)\sigma^2, &~~ \text{if } j+\tau<t \text{ and } j \leq \tau \\
            2(4^{-\tau / \tau_{\text{mix}}}+\tau^2\gamma^2\sigma^2)\sigma^2, &~~ \text{if } j + \tau < t \text{ and } j > \tau. 
        \end{cases}
    \end{equation*}
    Consequently, we obtain:
    \begin{equation*}
        \begin{split}
            2\sum\limits_{t - j > \tau}\E[\langle\xi^t, u^t\rangle\cdot\langle\xi^j, u^j\rangle] &\leq 2\tau N\sigma^2 + 2 \tau N(2^{-\tau / \tau_{\text{mix}}}+\tau\gamma)\sigma^2 + 2N^2(4^{- \tau / \tau_{\text{mix}}} + \tau^2\gamma^2\sigma^2)\sigma^2\\
            &\leq 4\tau N\sigma^2 + 2N^2(4^{- \tau / \tau_{\text{mix}}}+\tau^2\gamma^2\sigma^2)\sigma^2.
        \end{split}
    \end{equation*}
    Wrapping things up:
    \begin{equation*}
        \begin{split}
            \E\left\|\sum\limits_{t = 1}^N\xi^t\right\|^2_* \leq \frac{3R^4}{\gamma^2} + \frac{3\gamma^2N^2\sigma^4}{4} + 3(4\tau N\sigma^2 + 2N^2(4^{- \tau / \tau_{\text{mix}}}+\tau^2\gamma^2\sigma^2)\sigma^2).
        \end{split}
    \end{equation*}
    Taking $\gamma = \frac{R}{6\sqrt{N\tau}}$, we get:
    \begin{equation*}
        \begin{split}
        \E\left\|\sum\limits_{t = 1}^N\xi^t \right\|^2_* \leq 3R^2N\tau\sigma^2+\frac{3N\sigma^2R^2}{4\tau} &+ 3(5\tau N \sigma^2 + 4^{- \tau / \tau_{\text{mix}}} N^2\sigma^2 + R^2N\tau\sigma^2)\\ &\leq \Big[7R^2N\tau + 15N\tau+4^{- \tau / \tau_{\text{mix}}}N^2\Big]\sigma^2.
        \end{split}
    \end{equation*}
    Now, in case of $N \leq \tau_\text{mix}$ we take $\tau = 0$ and obtain:
    \begin{align*}
        \E\left\|\sum\limits_{t = 1}^N\xi^t \right\|^2_* \leq 4^{-\tau/\tau_\text{mix}}N^2\sigma^2 \leq N\tau_\text{mix}\sigma^2.
    \end{align*}
    And in case of $N > \tau_\text{mix}$ we again need to compare terms of the form $\tau$ and $4^{- \tau / \tau_{\text{mix}}}N$, similar to our discussion at the end of the proof of Theorem \ref{theorem:MDGwb} in Section \ref{appendix:MDGwb}, we can conclude that $\tau = \tau_{\text{mix}} \tilde{\cO}(1)$ and obtain the result of the form
    \begin{equation*}
        \begin{split}
        \E\left\|\sum\limits_{t = 1}^N\xi^t \right\|^2_* 
        &\leq \Big[7R^2N\tau_{\text{mix}} + 15N\tau_{\text{mix}}\Big]\sigma^2.
        \end{split}
    \end{equation*}
    Dividing both sides by $N^2$ concludes the proof.
\end{proof}
\begin{proof}[Proof of Lemma \ref{lem:expect_bound_grad}]
    To show that $\E_t[g^t] = \E_t[g^{t}_{\lfloor \log_2 M \rfloor}]$ we simply compute conditional expectation w.r.t. $J_t$:
\begin{equation}
\label{eq:tech:lem3}
    \begin{split}
        \E_t[g^t] &= \E_k\left[\E_{J_t}[g^t]\right] =
        \E_t[g^k_0] + \sum\limits_{i=1}^{\lfloor \log_2 M \rfloor} \mathbb{P}\{J_t = i\} \cdot 2^i \E_t[g^{t}_{i}  - g^{t}_{i-1}] \\
        &= \E_t[g^t_0] + \sum\limits_{i=1}^{\lfloor \log_2 M \rfloor} \E_t[g^{t}_{i}  - g^{t}_{i-1}] = \E_t[g^{t}_{\lfloor \log_2 M \rfloor}]\,.
    \end{split}
\end{equation}
We start with the proof of the first statement of \ref{lem:expect_bound_grad} by taking the conditional expectation for $J_t$:
\begin{align*}
    &\E_{t}[\| \nabla f(x^t_g) - g^t\|^2]
    \leq
    2\E_{t}[\| \nabla f(x^t_g) - g^t_0\|^2] + 2\E_{t}[\| g^t - g^t_0\|^2]
    \\
    &=
    2\E_{t}[\| \nabla f(x^t_g) - g^t_0\|^2] + 2 \sum\nolimits_{i=1}^{\lfloor \log_2 M \rfloor} \mathbb{P}\{J_t = i\} \cdot 4^i \E_t[\|g^{t}_{i}  - g^{t}_{i-1}\|^2] \\
    &=
    2\E_{t}[\| \nabla f(x^t_g) - g^t_0\|^2] + 2\sum\nolimits_{i=1}^{\lfloor \log_2 M \rfloor} 2^i \E_t[\|g^{t}_{i}  - g^{t}_{i-1}\|^2] \\
    &\leq
    2\E_{t}[\| \nabla f(x^t_g) - g^t_0\|^2] + 4\sum\nolimits_{i=1}^{\lfloor \log_2 M \rfloor} 2^i \left(\E_t[\|\nabla f(x^t_g)  - g^{t}_{i-1}\|^2 + \E_t[\|g^{t}_{i}  - \nabla f(x^t_g)\|^2] \right)\,.
\end{align*}
To bound $\E_{t}[\| \nabla f(x^t_g) - g^t_0\|^2]$, $\E_{t}[\|\nabla f(x^t_g)  - g^{t}_{i-1}\|^2$, $\E_{t}[\|g^{t}_{i}  - \nabla f(x^t_g)\|^2]$, we apply Lemma \ref{lem:xuivjopeentertainment} and get
\begin{align*}
    \E_{t}[\| \nabla f(x^t_g) - g^t\|^2]
    &\leq 2B^{-1}\sigma^{2} + 4B    ^{-1}\sum\nolimits_{i=1}^{\lfloor \log_2 M \rfloor} 2^i \cdot \frac{2(7R^2+15) \tau_{\text{mix}}}{2^{i}} \sigma^2 \\
    &\leq
    8(7R^2+15)B^{-1}\tau_{\text{mix}}\log_2 M \sigma^2 \,.
\end{align*}
To show the second part of the statement, we use \eqref{eq:tech:lem3} and get
\[
\| \nabla f(x^t_g) - \E_{t}[g^t]\|^2 = \| \nabla f(x^t) - \E_k[g^{t}_{\lfloor \log_2 M \rfloor}]\|^2\,.
\]
Applying Lemma \ref{lem:xuivjopeentertainment} and $2^{\lfloor \log_{2}M \rfloor} \geq M/2$ finishes the proof.
\end{proof}
\section{Proofs of results for \texttt{MAMD} with batching (Algorithm \ref{alg:MDG})}
\label{appendix:MDG}
    \begin{proof}[Proof of Theorem \ref{theorem:MDG}.]
        We begin by writing out result from Lemma \ref{lemma:MDwb} with $x = x^*$, $\eta_t = 1 - L\gamma_t / \beta_t \geq 0$ and replacing $\nabla F(x_g^t, Z_t)$ by $g^t$, because in Algorithm \ref{alg:MDG} we use $g^t$ as the gradient on the step $t$. Therefore we can obtain that
        \begin{equation}
        \label{eq:tmp_th2_1}
        \begin{split}
            \beta_t \gamma_t (f(x^{t+1}_f) - f^*)
            &\leq
            (\beta_t-1)\gamma_t (f(x_f^t) - f^*)
            +
            \gamma_t \dotprod{\nabla f(x^t_g) - g^t}{x^{t} - x^*} 
            \\&+ 
            V(x^t, x^*) - V(x^{t+1}, x^*)
            +
            \frac{\gamma_t^2 \beta_t}{2 (\beta_t - L \gamma_t)}\| \nabla f(x^t_g) - g^t \|_q^2 .
        \end{split}
        \end{equation}
        Using tower property and Cauchy-Schwarz inequality \ref{axil:cauchy_schwarz} we can obtain
        \begin{equation*}
        \begin{split}
            \expect{\dotprod{\nabla f(x^t_g) - g^t}{x^{t} - x^*}}
            &=
            \expect{\dotprod{\nabla f(x^t_g) - \EEb{t}{g^t}}{x^{t} - x^*}}
            \\&\leq
            \expect{\|\nabla f(x^t_g) - \EEb{t}{g^t} \|_q \| x^{t} - x^*\|_p}
        \end{split}
        \end{equation*}
        Using the first claim of the Lemma \ref{lem:expect_bound_grad}, the fact that $\forall x, y \in \cX$ it holds that $\|x - y\|_p \leq \sqrt{2} D$ and Fenchel-Young inequality \ref{axil:fenchel_young} with $\kappa = T$ we get
        \begin{equation}
        \label{eq:tmp_th2_2}
        \begin{split}
            \gamma_t \expect{\dotprod{\nabla f(x^t_g) - g^t}{x^{t} - x^*}}
            &\lesssim
            \gamma_t \sqrt{2 \tau_{\text{mix}} M^{-1} B^{-1} \sigma^2 D^2}
            \\&\lesssim
            T^{-1} D^2 
            +
            T \gamma_t^2 \tau_{\text{mix}} M^{-1} B^{-1} \sigma^2 .
        \end{split}
        \end{equation}
        Using tower property and the second claim of the Lemma \ref{lem:expect_bound_grad} we obtain that 
        \begin{equation}
        \label{eq:tmp_th2_3}
            \expect{\| \nabla f(x^t_g) - g^t \|_q^2}
            =
            \expect{\EEb{t}{\| \nabla f(x^t_g) - g^t \|_q^2}}
            \lesssim
            \tau_{\text{mix}} B^{-1} \log M \sigma^2 .
        \end{equation}
        Now using \eqref{eq:tmp_th2_2}, \eqref{eq:tmp_th2_3} and the fact, that $\beta_t - L\gamma_t \geq \beta_t/2$ we can estimate the expectation of \eqref{eq:tmp_th2_1}:
        \begin{equation}
        \label{eq:tmp_th2_4}
        \begin{split}
            \beta_t \gamma_t \expect{f(x^{t+1}_f) - f^*}
            &\lesssim
            (\beta_t-1)\gamma_t \expect{f(x_f^t) - f^*}
            +
            T^{-1} D^2 
            + 
            V(x^t, x^*) - V(x^{t+1}, x^*)
            \\&+
            T \gamma_t^2 \tau_{\text{mix}} M^{-1} B^{-1} \sigma^2 
            +
            \gamma_t^2 \tau_{\text{mix}} B^{-1} \log M \sigma^2 .
        \end{split}
        \end{equation}
        Using the fact, that $0 \leq (\beta_{t+1} - 1) \gamma_{t+1} \leq \beta_t \gamma_t$ and $\beta_0 = 1$, $0 \leq V(x, y) \leq D^2$ and summing \eqref{eq:tmp_th2_4} from $t = 0$ to $t = T-1$ we obtain
        \begin{equation*}
        \begin{split}
            (\beta_T - 1) \gamma_T \expect{f(x^{T}_f) - f^*}
            &\lesssim
            2 D^2 
            +
            \tau_{\text{mix}} B^{-1} \left( T M^{-1} + \log M \right) \sigma^2 \sum\limits_{t = 0}^{T-1}\gamma_t^2 .
        \end{split}
        \end{equation*}
        This finishes the proof .
    \end{proof}
    \begin{proof}[Proof of Corollary \ref{corollary:MDG}]
        As it was done in Corollary \ref{corollary:MDG} (see Appendix \ref{appendix:MDGwb})
        Choosing $\beta_t$ and $\gamma_t$ as 
        \begin{equation*}
            \beta_t := \frac{t}{2} + 1
            ~\text{ and }~
            \gamma_t := \left( \frac{t}{2} + 1 \right) \cdot 
            \gamma_*,
        \end{equation*}
        We obtain result of the from 
        \begin{equation*}
            \expect{f(x^{T}_f) - f^*}
            =\cO\left(
            \frac{D^2}{\gamma_* T^2} 
            +
            \tau_{\text{mix}} B^{-1} \left( T M^{-1} + \log M \right) \gamma_* \sigma^2 T \right).
        \end{equation*}
        If we choose $M = T / W(T) \sim T$ and $B = 1$, then we obtain result of the form 
        \begin{equation*}
            \expect{f(x^{T}_f) - f^*}
            =\widetilde{\cO}\left(
            \frac{D^2}{\gamma_* T^2} 
            +
            \tau_{\text{mix}} \gamma_* \sigma^2 T \right).
        \end{equation*}
        Choosing $\gamma_*$ as 
        $$\gamma_* := \min\left\{ \frac{1}{2L} ~;~ \frac{D}{T^{3/2} \sigma \tau_{\text{mix}}^{1/2}} \right\},$$
        gives us result of the form
        \begin{equation*}
            \expect{f(x^{T}_f) - f^*}
            =\widetilde{\cO}\left(
            \frac{L D^2}{T^2} 
            +
            \sqrt{\frac{\tau_{\text{mix}} D^2 \sigma^2}{T}} \right).
        \end{equation*}
        This finishes the proof .
    \end{proof}
\section{Proofs of results for \texttt{MMP} without batching (Algorithm \ref{alg:MPGwb})}
\label{appendix:th3}
\begin{proof}
    We start with using Lemma \ref{nemirovski:lem4} with $w = x^{t+\frac{1}{2}},~~ x = x^t,~~ r_+ = x^{t+1},~ \zeta = \gamma F(x^t, Z_t)~$ and $ \eta = \gamma F(x^{t+\frac{1}{2}}, Z_t)$, thus 
    \begin{equation}
    \label{eq:th3:1}
        \begin{split}
            \gamma\langle F(x^{t+\frac{1}{2}}, Z_t), x^{t+\frac{1}{2}} - u\rangle &+ V(x^{t+1}, u) - V(x^t, u) \\&\leq \frac{\gamma^2}{2}\|F(x^{t+\frac{1}{2}}, Z_t) - F(x^{t}, Z_t)\|^2_*-\frac{1}{2}\|x^{t+\frac{1}{2}} - x^t\|^2 \\&\leq \Bigg(\frac{\gamma^2\Tilde{L}^2}{2} - \frac{1}{2}\Bigg)\|x^{t+\frac{1}{2}} - x^t\|^2,
        \end{split}
    \end{equation}
    
    where the last inequality follows from Assumption \ref{as:lipvar_Z}. Using Assumption \ref{as:monotone}, one can obtain 
    \begin{equation*}
    \begin{split}
        \gamma \langle F(x^{t+\frac{1}{2}}, Z_t), x^{t+\frac{1}{2}} - u\rangle &\geq \gamma \langle F(u, Z_t), x^{t+\frac{1}{2}} - u\rangle \\&= \gamma \langle F(u), x^{t+\frac{1}{2}} - u\rangle + \gamma \langle F(u, Z_t) - F(u), x^{t+\frac{1}{2}} - u\rangle.
    \end{split}
    \end{equation*}
    Thus, \eqref{eq:th3:1} turns into
    \begin{equation}
        \label{eq:th3:3}
        \begin{split}
            \gamma\langle F(u), x^{t+\frac{1}{2}} - u\rangle &+ V(x^{t+1}, u) - V(x^t, u) \\&\leq \Bigg(\frac{\gamma^2\Tilde{L}^2}{2} - \frac{1}{2}\Bigg)\|x^{t+\frac{1}{2}} - x^t\|^2 + \gamma\langle F(u) - F(u, Z_t), x^{t+\frac{1}{2}} - u\rangle.
        \end{split}
    \end{equation}
    Consider the last term, for $t \geq \tau$ we have
    \begin{equation}
    \label{eq:th3:2}
        \begin{split}
            \langle F(u) &- F(u, Z_t), x^{t+\frac{1}{2}} - u\rangle \\&= \langle F(u) - F(u, Z_t), x^{t - \tau + \frac{1}{2}} - u\rangle + \langle F(u) - F(u, Z_t), x^{t+\frac{1}{2}} - x^{t - \tau + \frac{1}{2}}\rangle \\&\leq \langle F(u) - F(u, Z_t), x^{t - \tau + \frac{1}{2}} - u\rangle + \|F(u) - F(u, Z_t)\|_*\|x^{t+\frac{1}{2}} - x^{t- \tau +\frac{1}{2}}\|.
        \end{split}
    \end{equation}
    Consider $\|x^{t+\frac{1}{2}} - x^{t-\tau+\frac{1}{2}}\|$
    \begin{equation*}
        \begin{split}
           \|x^{t+\frac{1}{2}} - x^{t-\tau+\frac{1}{2}}\| &= \|x^{t+\frac{1}{2}} - x^t + x^t - x^{t-\tau+\frac{1}{2}}\| \leq \|x^{t+\frac{1}{2}} - x^t\| + \|x^t - x^{t-\tau+\frac{1}{2}}\| \\&= \|x^{t+\frac{1}{2}} - x^t\| + \|x^t - x^{t-\frac{1}{2}} + x^{t-\frac{1}{2}} - x^{t-\tau+\frac{1}{2}}\| = \|x^{t+\frac{1}{2}} - x^t\| \\&+ \|P_{x^{t-1}}(\gamma F(x^{t-\frac{1}{2}}, z^{t-1})) - P_{x^{t-1}}(\gamma F(x^{t-1}, z^{t-1})) + x^{t-\frac{1}{2}} - x^{t-\tau+\frac{1}{2}}\| \\&\leq \|x^{t+\frac{1}{2}} - x^t\| + \gamma\|F(x^{t-\frac{1}{2}}, z^{t-1}) - F(x^{t-1}, z^{t-1})\| + \|x^{t-\frac{1}{2}} - x^{t-\tau+\frac{1}{2}}\| \\&\leq \|x^{t+\frac{1}{2}} - x^t\| + \gamma \Tilde{L}\|x^{t-\frac{1}{2}} - x^{t-1}\| + \|x^{t-\frac{1}{2}} - x^{t-\tau+\frac{1}{2}}\|.
        \end{split}
    \end{equation*}
    Performing the recursion one can obtain 
    \begin{equation*}
        \|x^{t+\frac{1}{2}} - x^{t - \tau + \frac{1}{2}}\| \leq \sum\limits_{s = 0}^{\tau - 1}(1+\gamma \Tilde{L})\|x^{t - s + \frac{1}{2}} - x^{t - s}\|.
    \end{equation*}
    Thus, \eqref{eq:th3:2} turns into 
    \begin{equation}
        \begin{split}
            \langle F(u) - F(u, Z_t), x^{t+\frac{1}{2}} - 
            &x^{t - \tau + \frac{1}{2}}\rangle \leq \|F(u) - F(u, Z_t)\|_*\Bigg( \sum\limits_{s = 0}^{\tau - 1}(1+\gamma \Tilde{L})\|x^{t - s + \frac{1}{2}} - x^{t - s}\|\Bigg) \\&~~~\leq \frac{1}{\beta} \|F(u) - F(u, Z_t)\|_*^2 + \beta \Bigg( \sum\limits_{s = 0}^{\tau - 1}(1+\gamma \Tilde{L})\|x^{t - s + \frac{1}{2}} - x^{t - s}\|\Bigg)^2 \\&~~~\leq \frac{1}{\beta} \|F(u) - F(u, Z_t)\|_*^2 + \beta \tau (1+\gamma \Tilde{L})^2\sum\limits_{s = 0}^{\tau - 1}\|x^{t - s + \frac{1}{2}} - x^{t - s}\|^2.
        \end{split}
    \end{equation}
    Assume a notation $a^t := \E[\|x^{t+\frac{1}{2}} - x^t\|^2]$. Taking the expectation, from \eqref{eq:th3:3} we get 
    \begin{equation*}
    \begin{split}
        \gamma \E[\langle F(u), x^{t + \frac{1}{2}} - u\rangle] &\leq \expect{V(x^t, u)} - \expect{V(x^{t+1}, u)} + \frac{1}{2}(\gamma^2\Tilde{L}^2 - 1)a^t \\&+ \frac{\gamma}{\beta} \E\|F(u) - F(u, Z_t)\|_*^2 + \gamma\expect{\langle F(u) - F(u, Z_t), x^{t - \tau + \frac{1}{2}} - u\rangle} \\&+ \gamma \beta \tau (1+\gamma \Tilde{L})^2\sum\limits_{s = t-\tau+1}^{t - 1}a^s.
    \end{split}
    \end{equation*}
    Denoting $\xi^t = \|F(u) - F(u, Z_t)\|_*^2$ and $u^{t - \tau} = x^{t - \tau + \frac{1}{2}} - u$, we get 
    \begin{equation*}
        \begin{split}
            \E[\langle \xi^t, u^{t-\tau}\rangle] &= \E[\langle \E[\xi^t| \xi^0, \ldots, \xi^{t-\tau}], u^{t-\tau}\rangle] \leq \|\E[\xi^t|\xi^{t-\tau},\ldots, \xi^{t-1}]\|_*\cdot\|u^{t - \tau}\|\\
            &\leq\|\E[\xi^t|\xi^{t-\tau},\ldots, \xi^{t-1}]\|_* D {=} \left\| \sum\limits_{\nu\in \mathcal{M}}(\mathbb{P}\{\xi^t = \nu|\xi^{t-\tau}\} - \pi_{\nu})\nu \right\|_*D\\
            &\leq \sum\limits_{\nu\in \mathcal{M}}|\mathbb{P}\{\xi^t = \nu|\xi^{t-\tau}\} - \pi_\nu|\cdot\|\nu\|_*D \leq  \varepsilon D \sum\limits_{\nu\in \mathcal{M}} \pi_\nu\|\nu\|_*
        \end{split}
    \end{equation*}
    Therefore, we have 
    \begin{equation*}
    \begin{split}
        &\gamma \E[\langle F(u), x^{t + \frac{1}{2}} - u\rangle] \leq \expect{V(x^t, u)} - \expect{V(x^{t+1}, u)} + \frac{1}{2}(\gamma^2L^2 - 1)a^t \\&\quad+ \frac{\gamma}{\beta} \E\|F(u) - F(u, Z_t)\|_*^2 + \gamma \varepsilon \E_{\pi}[\|F(u) - F(u, \cdot)\|_*] D + \gamma \beta \tau (1+\gamma \Tilde{L})^2\sum\limits_{s = t-\tau+1}^{t - 1}a^s.
    \end{split}
    \end{equation*}
    Summing from $t = \tau$ to $T$ and dividing both sides by $T - \tau$, we obtain
    \begin{equation*}
        \begin{split}
            \gamma\E[\langle F(u), \frac{1}{T-\tau}&\sum\limits_{t = \tau}^Tx^{t+\frac{1}{2}} - u\rangle] \leq \frac{1}{T-\tau}V(x^\tau, u) + \frac{1}{2(T-\tau)}(\gamma^2L^2 - 1)\sum\limits_{t=\tau}^Ta^t \\&+\gamma(T - \tau)\varepsilon\E_{\pi}[\|F(u) - F(u, \cdot)\|_*]D + \frac{1}{T-\tau}\frac{\gamma}{\beta} \sum\limits_{t=\tau}^T\E\|F(u) - F(u, Z_t)\|_*^2 \\&+ \gamma \beta \tau (1+\gamma \Tilde{L})^2\sum\limits_{t = \tau}^T\sum\limits_{s = t-\tau+1}^{t - 1}a^s \\&\quad\quad\quad\quad\quad\quad~~\leq \frac{D^2}{T-\tau} + \frac{1}{T-\tau}\sum\limits_{t = \tau}^T\Bigg(\frac{\gamma^2\Tilde{L}^2}{2} - \frac{1}{2} + \gamma\tau^2\beta(1+\gamma \Tilde{L})^2\Bigg)a^t \\&+ 
            \frac{\gamma \beta \tau (1 + \gamma \Tilde{L})^2}{T-\tau} \sum\limits_{t = 0}^{\tau-1} a^t \\&+
            (T - \tau)\gamma\varepsilon\E_{\pi}[\|F(u) - F(u, \cdot)\|_*]D + \frac{1}{T-\tau}\frac{\gamma}{\beta} \sum\limits_{t=\tau}^T\E\|F(u) - F(u, Z_t)\|_*^2.
        \end{split}
    \end{equation*}
    Assume a notation $\frac{1}{T - \tau}\sum\limits_{t = \tau}^{T}x^{t+\frac{1}{2}} = \Hat{x}^{t+\frac{1}{2}}$. Choosing $\gamma\leq \frac{1}{2\Tilde{L}}$ and $\beta = \frac{1}{6\tau^2\gamma}$, we get 
    \begin{equation*}
        \begin{split}
            \E[\langle F(u), \Hat{x}^{t+\frac{1}{2}} - u\rangle] &\leq \frac{D^2}{\gamma (T - \tau)} + (T - \tau)\varepsilon\E_{\pi}[\|F(u) - F(u, \cdot)\|_*]D \\&\quad+ \frac{6\tau^2\gamma}{T - \tau}\sum\limits_{t=\tau}^{T}\E\|F(u) - F(u, Z_t)\|_*^2.
        \end{split}
    \end{equation*}
    For $t \geq \tau_{\text{mix}}$ we handle the last term
    \begin{equation*}
        \E[\xi^t] = \sum\limits_{Z\in\mathcal{Z}}\mathbb{P}\{\xi^t = Z\}Z \leq \sum\limits_{Z\in\mathcal{Z}}|\mathbb{P}\{\xi^t = Z\} - \pi_Z|Z + \sum\limits_{Z\in\mathcal{Z}}\pi_ZZ \leq (1+\varepsilon)\E_\pi[\xi^t].
    \end{equation*}
    Thus, we obtain 
    \begin{equation*}
        \begin{split}
            \E[\langle F(u), \Hat{x}^{t+\frac{1}{2}} - u\rangle] &\leq \frac{2D^2}{\gamma (T - \tau)} + \gamma\varepsilon^2 (T - \tau)^3\E_{\pi}[\|F(u) - F(u, \cdot)\|^2_*] \\&+ 6\tau^2\gamma(1+\varepsilon)\E_\pi[\|F(u) - F(u, \cdot)\|_*^2].
        \end{split}
    \end{equation*}
    Recall that $\varepsilon = (1/2)^{\frac{\tau}{\tau_{\text{mix}}}}$, one can get 
    \begin{equation*}
        \begin{split}
            \E[\langle F(u), \Hat{x}^{t+\frac{1}{2}} - u\rangle] &\leq \frac{2D^2}{\gamma (T - \tau)} + \gamma(12 \tau^2 +  \frac{(T-\tau)^{3}}{4^{\tau/\tau_{\text{mix}}}})\E_{\pi}[\|F(u) - F(u, \cdot)\|^2_*].
        \end{split}
    \end{equation*}
    Now we need to compare two term of the form $\frac{(T-\tau)^3}{4^{\tau / \tau_{\text{mix}}}}$ and $12\tau^2$. If we consider $\tau = \tau_{\text{mix}} \tilde{\tau}$, then we obtain that $\tilde{\tau} \sim W({T^{3/2}} / 2\sqrt{3}\tau_{\text{mix}})$, where $W$ is Lambert $W$ function \cite{lehtonen2016lambert} and it grows slower than the logarithm function. Therefore we can conclude, that $\tau = \tilde{\cO}(\tau_{\text{mix}})$ and get the result of the form
    \begin{equation*}
        \begin{split}
            \E[\langle F(u), \Hat{x}^{t+\frac{1}{2}} - u\rangle] &\leq \frac{2D^2}{\gamma (T - \tau)} + 12\gamma \tau_{\text{mix}}^2 \E_{\pi}[\|F(u) - F(u, \cdot)\|^2_*].
        \end{split}
    \end{equation*}
    Taking $\underset{u}{\max}$ from both sides concludes the proof.
\end{proof}
\begin{proof}[Proof of Corollary \ref{cor:th3}]
    From Theorem \ref{theorem:MPGwb} we can obtain, that
        \begin{equation*}
            \underset{u}{\max}\E[\langle F(u), \Hat{x}^{t+\frac{1}{2}} - u\rangle] \leq \frac{2D^2}{\gamma (T - \tau_{\text{mix}})} + 12\gamma \tau_{\text{mix}}^2\sigma^2.
        \end{equation*}
        Choosing $\gamma$ as 
        \begin{equation*}
            \gamma := \min\left\{ \frac{1}{2\Tilde{L}} ~;~ \frac{D}{(T - \tau_{\text{mix}})^{1/2} \sigma \tau_{\text{mix}}} \right\},
        \end{equation*}
        we obtain
        \begin{equation*}
            \underset{u}{\max}\E[\langle F(u), \Hat{x}^{t+\frac{1}{2}} - u\rangle]
            = \cO \left(
            \frac{2 \Tilde{L} D^2}{T-\tau_{\text{mix}}}
            +
            \frac{D \tau_{\text{mix}} \sigma}{\sqrt{T-\tau_{\text{mix}}}} \right) .
        \end{equation*}
        This finishes the proof.
\end{proof}
\section{Proofs of results for \texttt{MMP} with batching (Algorithm \ref{alg:MPG})}
    \label{appendix:th4}
    We start with result from \cite{juditsky2011solving}. 
    \begin{lemma}[Theorem 2 from \cite{juditsky2011solving}]
    \label{lemma:VI}
        Let the conditions of Theorem \ref{theorem:MPG} be satisfied. Let us define 
        $$\delta^t := F(x^{t+\frac{1}{2}}) - g^t.$$
        For $x^t$ belonging to the trajectory $\{x^0, x^\frac{1}{2},\ldots, x^T, x^{T+\frac{1}{2}}\}$ of the Algorithm \ref{alg:MPG}, let $$\epsilon^{1/2}_t = \|F(x^t) - g^{t+\frac{1}{2}}\|^2_q,\quad \epsilon^1_t = \|F(x^{t+\frac{1}{2}}) - g^t\|^2_q.$$
        And define auxiliary sequence $\{y^t\}_{t=0}^\infty$ as 
        $$y^{t} = P_{y^{t-1}}(\delta^t), ~~ y^0 = x^0.$$
        Then for all $T \geq 0$ and for all $u \in \cX$, it holds that 
        \begin{equation*}
        \begin{split}
            \gamma \dotprod{F(u)}{x^{t+1/2} - u} 
            &\leq 
            2 D^2
            +
            \frac{3\gamma^2}{2} \sum\limits_{t = 0}^{T-1}\left(2 \epsilon_t^{1/2} + 3 \epsilon_t^1 \right)
            +
            \gamma \sum\limits_{t = 0}^{T-1}\dotprod{\delta^t}{x^{t+1/2} - y^{t-1}} .
        \end{split}
        \end{equation*}
    \end{lemma}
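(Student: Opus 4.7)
The target is a purely deterministic pathwise inequality about the Mirror-Prox iterates $\{x^t, x^{t+1/2}\}$ of Algorithm \ref{alg:MPG} (the stochasticity of $g^t, g^{t+1/2}$ is absorbed into the error quantities $\epsilon_t^{1/2}, \epsilon_t^1$ and into $\delta^t$). My plan is to follow the classical Stochastic Mirror-Prox analysis of \cite{juditsky2011solving}: apply the prox-inequality of Lemma \ref{nemirovski:lem4} once at every half-step of every iteration, use monotonicity of $F$ to bring $F(u)$ into the bound, and then split the stochastic residual using the auxiliary prox-sequence $\{y^t\}$ so that the noise term becomes controllable.

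\textbf{Step 1 (prox inequality at iteration $t$).} I would apply Lemma \ref{nemirovski:lem4} with the choices $x = x^t$, $w = x^{t+1/2} = P_{x^t}(\gamma g^{t+1/2})$, $r_+ = x^{t+1} = P_{x^t}(\gamma g^t)$, $\eta = \gamma g^t$, $\zeta = \gamma g^{t+1/2}$. This gives, for every $u \in \cX$,
\begin{equation*}
V(x^{t+1}, u) - V(x^t, u) \;\leq\; \gamma \dotprod{g^t}{u - x^{t+1/2}} + \tfrac{1}{2}\|\gamma g^{t+1/2} - \gamma g^t\|_*^2 - \tfrac{1}{2}\|x^{t+1/2} - x^t\|^2 .
\end{equation*}
I would add and subtract $F(x^{t+1/2})$ and $F(x^t)$ inside the norm on the right, and use Assumption \ref{as:lipvar} together with the stepsize condition $\gamma L \leq 1/2$ to absorb the term $\gamma^2 L^2 \|x^{t+1/2} - x^t\|^2$ into the negative quadratic, leaving a clean bound of the form $\tfrac{3\gamma^2}{2}(2\epsilon_t^{1/2} + 3\epsilon_t^1)$ for the residual (up to the exact constants displayed in the lemma).

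\textbf{Step 2 (monotonicity).} Write $\gamma\dotprod{g^t}{u - x^{t+1/2}} = \gamma\dotprod{F(x^{t+1/2})}{u - x^{t+1/2}} + \gamma\dotprod{\delta^t}{x^{t+1/2} - u}$. Assumption \ref{as:monotone} gives $\dotprod{F(x^{t+1/2})}{u - x^{t+1/2}} \leq \dotprod{F(u)}{u - x^{t+1/2}}$, so that after rearranging
\begin{equation*}
\gamma\dotprod{F(u)}{x^{t+1/2} - u} \;\leq\; V(x^t,u) - V(x^{t+1},u) + \tfrac{3\gamma^2}{2}(2\epsilon_t^{1/2} + 3\epsilon_t^1) + \gamma\dotprod{\delta^t}{x^{t+1/2} - u}.
\end{equation*}
Summing from $t=0$ to $T-1$ telescopes the Bregman terms to at most $V(x^0,u) \leq D^2$, giving two thirds of the desired bound already; $2D^2$ appears instead of $D^2$ because the auxiliary-sequence estimate in the next step also contributes a $D^2$.

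\textbf{Step 3 (decouple the noise via $y^t$).} The final remaining term is $\gamma\sum_t \dotprod{\delta^t}{x^{t+1/2} - u}$. Split it as $\gamma\sum_t \dotprod{\delta^t}{x^{t+1/2} - y^{t-1}} + \gamma\sum_t \dotprod{\delta^t}{y^{t-1} - u}$. The first piece is exactly what the lemma leaves as the last term on the right. For the second piece, apply Lemma \ref{nemirovski:lem4} to the auxiliary recursion $y^t = P_{y^{t-1}}(\delta^t)$ (with the trivial operator direction $\eta = \zeta = \delta^t$) to obtain $V(y^t,u) - V(y^{t-1},u) \leq \dotprod{\delta^t}{u - y^{t-1}}$ up to a nonnegative residual that vanishes when summed. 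Summing telescopes to another $V(y^0,u) \leq D^2$, producing the second $D^2$ in the constant $2D^2$. This is the conceptual crux: introducing $y^t$ is what allows the otherwise-biased coupling between $\delta^t$ and $x^{t+1/2}$ to be replaced by the unbiased coupling between $\delta^t$ and the $\mathcal{F}_{t-1}$-measurable $y^{t-1}$.

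\textbf{Main obstacle.} The delicate step is Step 1: one must carefully expand $\|g^{t+1/2} - g^t\|_*^2$, invoke the Lipschitz bound on $F$ together with $\gamma L \le 1/2$, and track the constants so the $\epsilon_t^{1/2}$ and $\epsilon_t^1$ coefficients come out exactly as $2$ and $3$ (and the $-\tfrac{1}{2}\|x^{t+1/2}-x^t\|^2$ term is fully absorbed). Step 3 is conceptually the most novel part of the argument but is essentially bookkeeping once one realizes that $y^t$ should satisfy the very same type of prox-inequality as the $x$ iterates.
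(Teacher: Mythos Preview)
Your sketch is the standard Juditsky--Nemirovski argument and is essentially correct; the paper itself does not prove this lemma but simply cites it as Theorem~2 of \cite{juditsky2011solving}. Two remarks, one a genuine (small) gap and one cosmetic.

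\medskip
\textbf{The residual in Step 3 does not vanish.} When you apply the prox inequality to the auxiliary step $y^t = P_{y^{t-1}}(\gamma\delta^t)$ (Lemma~\ref{nemirovski:lem3}(b) is the clean tool here, rather than Lemma~\ref{nemirovski:lem4} with $\eta=\zeta$), you get
\[
\gamma\langle \delta^t, y^{t-1}-u\rangle \;\le\; V(y^{t-1},u)-V(y^t,u)+\tfrac{\gamma^2}{2}\|\delta^t\|_*^2
\;=\;V(y^{t-1},u)-V(y^t,u)+\tfrac{\gamma^2}{2}\epsilon_t^1.
\]
The quadratic residual $\tfrac{\gamma^2}{2}\epsilon_t^1$ survives the telescoping sum and must be merged with the $\epsilon_t^1$ contribution already produced in Step~1. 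This is precisely why the coefficient of $\epsilon_t^1$ in the stated bound is strictly larger than that of $\epsilon_t^{1/2}$; your three-way split in Step~1 by itself gives equal coefficients $\tfrac{3\gamma^2}{2}$ for both.

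\medskip
\textbf{Cosmetic.} Your identification $\eta=\gamma g^t$, $\zeta=\gamma g^{t+1/2}$ is swapped relative to the hypotheses $w=P_x(\eta)$, $r_+=P_x(\zeta)$ written in Lemma~\ref{nemirovski:lem4}; however, the paper makes the identical swap when it applies the same lemma in the proof of Theorem~\ref{theorem:MPGwb}, so the displayed inequality you obtain is the intended one. The rest of Step~1 (the decomposition of $\|g^{t+1/2}-g^t\|_*^2$ via $\|a+b+c\|_*^2\le 3(\|a\|_*^2+\|b\|_*^2+\|c\|_*^2)$ and the absorption $\tfrac{3\gamma^2 L^2}{2}\le\tfrac{3}{8}<\tfrac{1}{2}$ under $\gamma L\le\tfrac12$) is correct.
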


    Now we are ready to prove Theorem \ref{theorem:MPG}.
    \begin{proof}[Proof of Theorem \ref{theorem:MPG}]
        Firstly let us take full mathematical expectation of the result of Lemma \ref{lemma:VI} and deal with noise terms separately. From Lemmas \ref{lem:xuivjopeentertainment} and \ref{lem:expect_bound_grad} we obtain that for all $t \geq 0$ it holds that
        \begin{equation*}
            \expect{\epsilon_t^{1/2}}
            \lesssim
            \tau_{\text{mix}} B^{-1} \sigma^2 
            ~\text{ and }~
            \expect{\epsilon^1_t}
            \lesssim
            \tau_{\text{mix}} B^{-1} \log(M) \sigma^2 .
        \end{equation*}
        From tower property, Cauchy-Schwarz inequality \ref{axil:cauchy_schwarz} and Lemma \ref{lem:expect_bound_grad} we obtain that 
        \begin{equation*}
        \begin{split}
            \expect{\dotprod{\delta^t}{x^{t+1/2} - y^{t-1}}}
            &\leq
            \left\| \EEb{t}{\delta^t} \right\|_q \|x^{t+1/2} - y^{t-1} \|_p
            \lesssim
            \sqrt{\tau_{\text{mix}} B^{-1}M^{-1} \sigma^2 D^2}. 
        \end{split}
        \end{equation*}
        Using notation $\widehat{x}^{T} := \frac{1}{T} \sum_{t=0}^{T-1} x^{t+1/2}$ we obtain that 
        \begin{equation*}
            \expect{\text{Err}_{\text{VI}}(\widehat{x}^{T})} 
            = 
            \widetilde{\cO} \left( 
                \frac{D^2}{\gamma T}
                +
                \gamma \tau_{\text{mix}} B^{-1} \left( 1 + \log(M) \right) \sigma^2
                +
                \sqrt{\tau_{\text{mix}} B^{-1}M^{-1} \sigma^2 D^2}
            \right).
        \end{equation*}
        Using Fenchel-Young inequality \ref{axil:fenchel_young} wuth $\kappa = \gamma T$ and the fact that $\log(M) \geq 1$ we obtain
        \begin{equation*}
            \expect{\text{Err}_{\text{VI}}(\widehat{x}^{T})} 
            = 
            \widetilde{\cO} \left( 
                \frac{D^2}{\gamma T}
                +
                \gamma \tau_{\text{mix}} B^{-1} \left( TM^{-1} + \log(M) \right) \sigma^2
            \right).
        \end{equation*}
        This finishes the proof.
    \end{proof}
\begin{proof}[Proof of Corollary \ref{cor:th4}]
    From Theorem \ref{theorem:MPGwb} we can obtain, that
        \begin{equation*}
           \expect{\text{Err}_{\text{VI}}(\widehat{x}^{T})} 
            = 
            \widetilde{\cO} \left( 
                \frac{D^2}{\gamma T}
                +
                \gamma \tau_{\text{mix}} B^{-1} \left( TM^{-1} + \log(M) \right) \sigma^2
            \right).
        \end{equation*}
        Choosing $\gamma$ as 
        \begin{equation*}
        \gamma := \min\left\{ \frac{1}{2L} ~;~ \frac{D}{T^{1/2} \sigma \tau_{\text{mix}}^{1/2} }\right\}
        ,~
        M =  T 
        ~\text{ and }~
        B = 1,
        \end{equation*}
        we obtain
        \begin{equation*}
            \expect{\text{Err}_{\text{VI}}(\widehat{x}^{T})} 
            = 
            \widetilde{\cO} \left( 
                \frac{LD^2}{T}
                +
                \frac{\tau_{\text{mix}} D \sigma}{\sqrt{T}}
            \right).
        \end{equation*}
        This finishes the proof.
\end{proof}
\section{Lower bounds}
\label{appendix:lover_bounds}

Firstly, we define the methods for which the lower bounds will be constructed. We say an iterative algorithm for solving \eqref{eq:problem1} is a \textit{stochastic first-order method} if it accesses the information of the function $f(x)$ through a \textit{stochastic first-order oracle}, denoted by $\mO:\cX\times\cZ\to\R^d$. For arbitrary point $(x, Z) \in \cX\times\cZ$, the oracle $\mO(x, Z)$ returns $\nabla F(x, Z)$.

Given an initial point $x^0$, a first-order method $\mM$ for solving problem \ref{eq:problem1}, at the $t$-th iteration, calls the oracle in the point $(x^t, Z_t)$ to collect the oracle information $\mO(x^t, Z_t)$ and then obtains a new point $x^{t+1}$. The complete method $\mM$ can be described by the sequence of points $\{x^t\}_{t=0}^{\infty}$ such that for all $t \geq 0$
\begin{equation*}
    x^{t+1} \in \text{Span}\left\{x^0, \dots, x^t, \mO(x^0, Z_0), \dots, \mO(x^t, Z_t) \right\} \cap \cX.
\end{equation*}
It is for algorithms of this class that we will prove lower bouns. For VI problems \eqref{eq:problem2}, class of the stochastic first-order methods is introduced similarly.
\begin{proof}[Proof of Proposition \ref{proposition:1}]
    Let $F^{\text{deter}}(x^1)$ -- is the function on which the lower bound for the deterministic problem from \cite{nesterov2013introductory} is reached, $F^{\text{stoch}}(x^2, Z)$ -- is the corresponding function for the Markov stochastic problem from \cite{duchi2012ergodic}. We define $x := (x^1, x^2)^T$ and $F(x, Z) = F^{\text{deter}}(x^1) + F^{\text{stoch}}(x^2, Z)$. Then we obtain that
    \begin{equation*}
        \nabla F(x, Z) = \left( \nabla_{x^1} F^{\text{deter}}(x^1), \nabla_{x^2} F^{\text{stoch}}(x^2, Z) \right)^T.
    \end{equation*}
    Since the gradient contains both terms of the form $\nabla_{x^1} F^{\text{deter}}(x^1)$ and $\nabla_{x^2} F^{\text{stoch}}(x^2, Z)$ separately, the convergence of any first-order method using Markov stochasticity will be of order $\widetilde{\cO}\left(\max\{ \Omega_{\text{deter}} ~;~ \Omega_{\text{stoch}} \}\right)$. This gives us the result of the Proposition \ref{proposition:1}. 
\end{proof}
\begin{proof}[Proof of Proposition \ref{proposition:2}]
    In case of variational inequalities problem \eqref{eq:problem2}, we can consider saddle point problems, as in the lower bound from \cite{ouyang2021lower} (it will be $F^{\text{deter}}(x^1)$), and a minimisation problem, as in the lower bound estimator from [2] (it will be $F^{\text{stoch}}(x^2, Z)$). Then Proposition \ref{proposition:2} is proved similarly to Proposition \ref{proposition:1}.
\end{proof}
\end{document}